\theoremstyle{plain}
\newtheorem{theorem}{Theorem}
\newtheorem{lemma}[theorem]{Lemma}
\newtheorem{corollary}[theorem]{Corollary}
\theoremstyle{definition}
\newtheorem{example}[theorem]{Example}
\newtheorem*{remark}{Remark}
\numberwithin{equation}{section}
\numberwithin{theorem}{section}
\numberwithin{figure}{section}
\numberwithin{table}{section}
\DeclarePairedDelimiter{\floor}{\lfloor}{\rfloor}
\DeclarePairedDelimiter{\norm}{|}{|}
\DeclarePairedDelimiter{\coeff}{[}{]}
\newcommand{\set}[1]{\left\{#1\right\}}
\newcommand{\phrase}[1]{``\textsl{#1}"}
\newcommand{\textdef}[1]{\textit{#1}}
\newcommand{\Case}[1]{\mbox{\emph{Case #1}:}}
\newcommand{\andq}{\quad \text{and} \quad}
\newcommand{\mids}{\,\mid\,}
\newcommand{\nmids}{\,\nmid\,}
\newcommand{\valueat}[1]{{\,}_{\big|\, #1}}
\newcommand{\dop}{\mathcal{D}}
\newcommand{\dif}{\Delta}
\newcommand{\ZZ}{\mathbb{Z}}
\newcommand{\QQ}{\mathbb{Q}}
\newcommand{\CC}{\mathbb{C}}
\newcommand{\ff}{\mathfrak{f}}
\newcommand{\fh}{\widehat{\mathfrak{f}}}
\newcommand{\FF}{\mathfrak{F}}
\newcommand{\FP}{\mathbf{F}\mspace{-2.5mu}}
\newcommand{\fp}{\bm{f}\mspace{-3mu}}
\newcommand{\dd}{\mathbf{d}}
\newcommand{\bb}{\mathfrak{b}}
\newcommand{\AN}{\mathbf{A}}
\newcommand{\BN}{\mathbf{B}}
\newcommand{\BH}{\widehat{\mathbf{B}}}
\newcommand{\BS}{\mathbf{B}^\diamond}
\newcommand{\BC}{\mathcal{B}}
\newcommand{\BF}{\mathfrak{B}}
\newcommand{\GN}{\mathbf{G}}
\newcommand{\LN}{\mathbf{L}}
\newcommand{\FH}{\widehat{F}}
\newcommand{\PF}{\Lambda}
\newcommand{\SF}{\mathtt{S}}
\newcommand{\pref}[1]{\texorpdfstring{\ref{#1}}{\ref*{#1}}}
\title[Faulhaber polynomials and reciprocal Bernoulli polynomials]
{Faulhaber polynomials and\\ reciprocal Bernoulli polynomials}
\author{Bernd C. Kellner}
\address{G\"ottingen, Germany}
\email{bk@bernoulli.org}
\keywords{Faulhaber polynomial, power sum, Bernoulli number and polynomial,
reciprocal and palindromic polynomial, Genocchi and Lah number, Hoppe's formula}
\subjclass[2020]{11B57 (Primary), 11B68 (Secondary)}
\begin{document}

\begin{abstract}
About four centuries ago, Johann Faulhaber developed formulas for the power sum
$1^n + 2^n + \cdots + m^n$ in terms of $m(m+1)/2$. The resulting polynomials
are called the Faulhaber polynomials. We first give a short survey of
Faulhaber's work and discuss the results of Jacobi (1834) and the less known
ones of Schr\"oder (1867), which already imply some results published afterwards.
We then show, for suitable odd integers $n$, the following properties of the
Faulhaber polynomials $F_n$. The recurrences between $F_n$, $F_{n-1}$, and
$F_{n-2}$ can be described by a certain differential operator. Furthermore, we
derive a recurrence formula for the coefficients of $F_n$ that is the
complement of a formula of Gessel and Viennot (1989). As a main result, we show
that these coefficients can be expressed and computed in different ways by
derivatives of generalized reciprocal Bernoulli polynomials, whose values can
also be interpreted as central coefficients. This new approach finally leads to
a simplified representation of the Faulhaber polynomials. As an application,
we obtain some recurrences of the Bernoulli numbers, which are induced by
symmetry properties.
\end{abstract}

\maketitle


\section{Introduction}

Define the sum-of-powers function as
\begin{equation} \label{eq:sum-def}
  S_n(m) = \sum_{\nu=0}^{m-1} \nu^n \quad (n \geq 0).
\end{equation}
It is well known that
\begin{equation} \label{eq:sum-poly}
  S_n(x) = \frac{1}{n+1}( \BN_{n+1}(x) - \BN_{n+1} ) \quad (n \geq 0)
\end{equation}
is a polynomial of degree $n+1$, where $\BN_n(x)$ denotes the $n$-th Bernoulli
polynomial and $\BN_n = \BN_n(0) \in \QQ$ is the $n$-th Bernoulli number.
The polynomials $\BN_n(x)$ can be defined by the exponential generating function
\[
  \frac{t e^{xt}}{e^t - 1} = \sum_{n=0}^{\infty} \BN_n(x) \frac{t^n}{n!}
    \quad (|t| < 2\pi)
\]
and are explicitly given by the formula
\begin{equation} \label{eq:bern-poly}
  \BN_n(x) = \sum_{k=0}^{n} \binom{n}{k} \BN_{n-k} \, x^k \quad (n \geq 0).
\end{equation}

The first few nonzero values of the Bernoulli numbers are
\begin{equation} \label{eq:bern-values}
  \BN_0 = 1,\;\; \BN_1 = -\tfrac{1}{2},\;\; \BN_2 = \tfrac{1}{6},\;\;
  \BN_4 = -\tfrac{1}{30},\;\; \BN_6 = \tfrac{1}{42},\;\; \BN_8 = -\tfrac{1}{30},
\end{equation}
while $\BN_n=0$ for odd $n \geq 3$. The nonzero values alternate in sign
such that $(-1)^{\frac{n}{2}-1} \, \BN_n > 0$ for even $n \geq 2$.

Originally, the Bernoulli numbers and formula~\eqref{eq:sum-poly} became famous
by Jacob Bernoulli's book \emph{Ars Conjectandi}~\cite{Bernoulli:1713},
which was posthumously published in 1713.

About one century before, Johann Faulhaber (1580--1635) already found formulas
for $S_n(x)$ in a quite different way, without knowledge of the Bernoulli
numbers. Instead of using Bernoulli polynomials as in~\eqref{eq:sum-poly},
the so-called Faulhaber polynomials come into play, being the subject of the
present paper. More precisely, the power sum $S_n(x)$ can be expressed in terms
of the first sums $S_1(x)$ and $S_2(x)$. Jacobi gave the first rigorous proof
of the following theorem, which originates from Faulhaber's pioneering work.

\begin{theorem}[Faulhaber--Jacobi \cite{Jacobi:1834} (1834)]
\label{thm:Faulhaber-Jacobi}
For ${n \geq 2}$, there exist the Faulhaber polynomials $F_n \in \QQ[x]$
of degree $\floor{n/2} - 1$ such that
\[
  S_n(x) = f_n(x) \, F_n( S_1(x) )
\]
with the factor
\[
  f_n(x) = \begin{cases}
    S_2(x), & \text{if $n \geq 2$ is even}, \\
    S_1(x)^2, & \text{if $n \geq 3$ is odd}.
  \end{cases}
\]
\end{theorem}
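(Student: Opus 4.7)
The plan is to exploit the reflection symmetry $\sigma\colon x \mapsto 1-x$ inherited from the Bernoulli polynomials. The main tool is the well-known identity $\BN_{n+1}(1-x) = (-1)^{n+1}\BN_{n+1}(x)$; combined with $\BN_{n+1} = 0$ for every even $n \geq 2$ and~\eqref{eq:sum-poly}, it yields
\[
  S_n(1-x) = \begin{cases} \phantom{-}S_n(x), & n \geq 3 \text{ odd}, \\ -S_n(x), & n \geq 2 \text{ even}. \end{cases}
\]
The second ingredient is the algebraic observation that the subring of $\QQ[x]$ fixed by $\sigma$ equals $\QQ[S_1(x)]$. Indeed, substituting $y = x - 1/2$ converts $\sigma$ into $y \mapsto -y$, so the $\sigma$-invariants are precisely the even polynomials in $y$; since $y^2 = 2\,S_1(x) + 1/4$, this subring is exactly $\QQ[S_1(x)]$.

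For odd $n \geq 3$, I would check that $0$ and $1$ are double roots of $S_n$. The vanishing $S_n(0) = S_n(1) = 0$ is immediate from~\eqref{eq:sum-def} (empty and singleton sums involving $0^n$). For second-order contact, note $S_n'(x) = \BN_n(x)$; then $\BN_n(0) = \BN_n = 0$ since $n$ is odd with $n \geq 3$, and $\BN_n(1) = -\BN_n(0) = 0$ by the reflection formula applied to $\BN_n$. Hence $x^2(x-1)^2 = 4\,S_1(x)^2$ divides $S_n(x)$ in $\QQ[x]$. The quotient $S_n(x)/S_1(x)^2$ has degree $n-3$ and is $\sigma$-invariant, so by the algebraic fact above it equals $F_n(S_1(x))$ for a unique $F_n \in \QQ[x]$ of degree $(n-3)/2 = \floor{n/2}-1$.

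For even $n \geq 2$, the polynomial $S_n$ is $\sigma$-anti-invariant, so $S_n(1/2) = 0$ in addition to $S_n(0) = S_n(1) = 0$. Since $x$, $x-1$, and $2x-1$ are pairwise coprime in $\QQ[x]$, their product $x(x-1)(2x-1) = 6\,S_2(x)$ divides $S_n(x)$. The quotient $G(x) = S_n(x)/S_2(x)$ is $\sigma$-invariant (the ratio of two anti-invariants), so $G(x) = F_n(S_1(x))$ with $\deg F_n = (n-2)/2 = \floor{n/2}-1$, as claimed.

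The chief obstacle is pinning down the invariant-ring lemma cleanly; once that is in hand, the remainder is careful divisibility and degree bookkeeping. I would also confirm that the leading coefficient $1/(n+1)$ of $S_n$ keeps the quotients nondegenerate, and that the small cases $n=2$ and $n=3$ (where $F_n$ is the constant polynomial $1$) are consistent with the degree formula $\floor{n/2}-1 = 0$.
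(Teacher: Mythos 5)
Your proof is correct and complete. Note, however, that the paper itself never proves Theorem~\ref{thm:Faulhaber-Jacobi}: it is quoted as a historical result, with proofs deferred to Jacobi, Beardon, and Prasolov, and the closest the paper comes to an argument of its own is the ``simple approach'' of Section~\ref{sec:appr}. That approach rests on the same underlying symmetry you use, but implements it computationally: Lemma~\ref{lem:bern-refl} expands $\BN_n(x)$ in powers of $x - \frac{1}{2}$, notes that only even-index terms survive, and substitutes $(x-\frac{1}{2})^2 = 2y + \frac{1}{4}$, which produces Schr\"oder-type expansions of $S_n$ in terms of $y = S_1(x)$ but leaves the decisive divisibility facts --- that the resulting polynomial in $y$ is divisible by $y^2$ for odd $n$, respectively by $y$ times the extra factor $x - \frac{1}{2}$ for even $n$ --- to Schr\"oder's explicit coefficient formulas and the cited literature. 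Your argument supplies exactly these facts structurally: $S_n$ has double roots at $0$ and $1$ in the odd case (via $S_n' = \BN_n$ and $\BN_n(0) = \BN_n(1) = 0$), and simple roots at $0$, $\frac{1}{2}$, $1$ in the even case (the root at $\frac{1}{2}$ forced by anti-invariance), while the invariant-ring identification $\QQ[x]^{\sigma} = \QQ[S_1(x)]$ converts invariance of the quotient into existence and uniqueness of $F_n$, with the degree count immediate. The trade-off is clear: the paper's expansion route is constructive and yields explicit coefficients in terms of $\BN_\nu(\frac{1}{2})$ (Theorem~\ref{thm:Schroeder}), whereas your route is shorter and fully self-contained but produces no formula for the coefficients of $F_n$.
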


As commonly used by several authors, the polynomials $F_n(y)$ for odd
$n \geq 3$ are defined here in such a way to get rid of an extra factor $y^2$.
This coincides with the simplified degree formula of $F_n$ and,
as we shall see later, with the property $F_n(0) \neq 0$ for $n \geq 2$.
See Table~\ref{tbl:F-poly} for the first few Faulhaber polynomials.

The next section gives a historical survey of Faulhaber's work and related
properties of power sums (the impatient reader may skip this for the moment).
Subsequently, Section~\ref{sec:jac} shows some basic results of Jacobi and
Schr\"oder, while Section~\ref{sec:appr} gives a simple approach to Schr\"oder's
theorem. From Section~\ref{sec:main} onward, the main results of the paper
are presented and developed consecutively.


\section{Faulhaber's work}

\yinipar{\color{teal}F}\textls[70]{\textfrak{aulhaber}} was known as
\emph{the great arithmetician of Ulm} in his day (from German: Rechenmeister).
He published numerous books during his life, only a few of them were devoted to
power sums. According to Schneider~\cite{Schneider:1983}, Faulhaber initially
published in \emph{Newer Arithmetischer Wegweyser}~\cite{Faulhaber:1614}
formulas for the power sum $S_n(m)$ up to exponent ${n=7}$ in 1614.
Subsequently, he published in \emph{Continuatio}~\cite{Faulhaber:1617}
formulas for exponents $n=8,\ldots,11$ in 1617, and finally up to $n=17$ in
his famous book \emph{Academia Algebrae}~\cite{Faulhaber:1631} in 1631.
The formulas were worked out in terms of $m(m+1)/2$, as well as of $m$.

Note that one has to \textsl{decode} Faulhaber's original formulas laboriously
because of an old and not commonly standardized notation. In particular, the
powers of variables, say~$x^k$, were represented by symbols, also called
the \textdef{cossic} notation (cf. Cajori~\cite[p.\,107]{Cajori:1928}),
which was used in the 16th and 17th centuries. Moreover, the text was written
in early new high German and printed in Fraktur typeface. A helpful reference
is Schneider's book~\cite{Schneider:1993}, where he illustrates a very detailed
survey of Faulhaber's mathematics and historical background.

Already in 1799, K\"astner gave a detailed overview of Faulhaber's literature
in his encyclopedia of the history of mathematics,
see \cite[pp.\,29--33, pp.\,111--151]{Kaestner:1799}.
For example, one reads therein that Faulhaber gave power sums for exponents
$8, \ldots, 11$ in \emph{Continuatio} and considered sums of sums in
\emph{Academia Algebrae} (see Theorem~\ref{thm:Faulhaber-Knuth} below).

To demonstrate Faulhaber's ability, one finds in his \emph{Academia Algebrae},
e.g., a table of the binomial coefficients $\binom{n}{k}$ up to $n=18$
(\cite[Sec.\,\textfrak{E}]{Faulhaber:1631}), and the values of the
factorials~$n!$ up to $n=20$ (\cite[Sec.\,\textfrak{D}]{Faulhaber:1631}).
Note that the digitalized copies of \cite{Faulhaber:1631} contain a few
misprinted digits (someone had corrected these digits with a pencil), but
the subsequent computations are overall correct.

Regarding the factorials,
Faulhaber initially computed the $n$-th differences of the sequence
$1^n, 2^n, \ldots$ for small $n$. He observed that these differences are constant,
and these values became known as $n!$ several decades later.
As an example, one finds a table of
iterated differences of the sequence $1^4, 2^4, \ldots, 6^4$, where the last
column contains the remaining two entries of fourth differences, namely, $24 = 4!$.
By this way, he arrived at the sequence $1, 2, 6, 24, 120, \ldots$.
He claimed the \textsl{general rule} $(n+1)! = (n+1) \times n!$
without giving a rigourous proof, but supported by a further table of differences
for the exponent $n=7$. Using this rule, he computed the higher
factorials in his table. Interestingly, he called the first examples as
\textls[70]{\textfrak{General Fundament}} and the rule as
\textls[70]{\textfrak{General Proce\ss}}, which can be vaguely understood as
\textsl{basis} and \textsl{step} of an induction in a heuristic sense,
respectively. One can definitely state that Faulhaber knew the technique
of finite differences very well (cf.~Schneider~\cite[p.\,294]{Schneider:1983}
and~\cite[Sec.\,7.3.2, p.\,147\,ff.]{Schneider:1993}).

The following procedure shows how to derive the first formulas of $S_n$ for
odd exponents~$n$ in a simple way (cf.~\cite[pp.\,290--291]{Schneider:1983}).
Using the forward difference $\dif f(x) = f(x+1) - f(x)$, we have the rule
\[
  \dif \binom{x}{n} = \binom{x}{n-1} \quad (n \geq 1).
\]
If we consider the difference of powers of binomial coefficients, essentially
here of $\binom{x}{2}$, then we start initially with the relations
\begin{equation} \label{eq:binom-sqr}
  \dif \binom{x}{2} = x \andq \dif \binom{x}{2}^{\!\!2} = x^3.
\end{equation}
Thus, summing up each equation of \eqref{eq:binom-sqr} by using the telescoping
property of the differences, this yields at once that

\[
  S_1(m) = \binom{m}{2} \andq S_3(m) = \binom{m}{2}^{\!\!2} \quad (m \geq 0).
\]
In particular, one immediately observes the well-known identity
\begin{equation} \label{eq:S1-3}
  S_1(m)^2 = S_3(m),
\end{equation}
which holds as polynomials. This identity is usually attributed to Nicomachus
of Gerasa (about 60--120), where the proof can be derived from identities
of triangular numbers known at that time similar to~\eqref{eq:binom-sqr}, as well as
to Al-Kara\u{g}\={\i} (953--1029), who gave a geometric proof (see Alten et al.
\cite[Sec.\,3.3.5, p.\,173]{ANFSSW:2003}, Bachmann~\cite[pp.\,4,\,18]{Bachmann:1910},
and Cantor~\cite[Vol.\,1, Chap.\,XXI, p.\,366, Chap.\,XXVI, p.\,473]{Cantor:1880}).

The relations in \eqref{eq:binom-sqr} were known to Faulhaber,
see Schneider \cite[Sec.\,7.3.1, p.\,140\,ff.]{Schneider:1993}.
In this vein, one obtains the next examples:
\[
  \dif \binom{x}{2}^{\!\!3} = \tfrac{1}{4}( x^3 + 3 x^5 ), \quad
  \dif \binom{x}{2}^{\!\!4} = \tfrac{1}{2}( x^5 + x^7 ), \quad
  \dif \binom{x}{2}^{\!\!5} = \tfrac{1}{16}( x^5 + 10 x^7 + 5 x^9 ).
\]
Therefore,
\begin{align*}
  S_1(x)^3 &= \tfrac{1}{4}( S_3(x) + 3 S_5(x) ), \\
  S_1(x)^4 &= \tfrac{1}{2}( S_5(x) + S_7(x) ), \\
  S_1(x)^5 &= \tfrac{1}{16}( S_5(x) + 10 S_7(x) + 5 S_9(x) ).
\end{align*}

Consequently, one can find the next formulas of $S_n$ for $n=5,7,9$ as a
function of $S_1$ quite easily. One may compare the formulas for $n=9$ using
the substitutions $y = \binom{x}{2}$ and $z = x^2$ for saving power operations:
\begin{equation} \label{eq:S9}
\begin{alignedat}{2}
  S_9(x) &= \tfrac{1}{5} y^2 ( 16 y^3 - 20 y^2 + 12 y - 3 )
    &\quad &\text{(Faulhaber)} \\
  &= \tfrac{1}{20} z \left( 2 z^4 - 10 x z^3 + 15 z^3 - 14 z^2 + 10 z - 3 \right)
    &\quad &\text{(Bernoulli).}
\end{alignedat}
\end{equation}

The general formula
\[
  S_1(x)^\ell = \frac{1}{2^{\ell-1}} \sum_{\substack{\nu=1\\2 \nmids \nu}}^{\ell}
    \binom{\ell}{\nu} S_{2\ell-\nu}(x) \quad (\ell \geq 1)
\]
was given by Stern~\cite{Stern:1878} in 1878
(cf.~Bachmann~\cite[Eq.\,(60), p.\,24]{Bachmann:1910} and Carlitz~\cite{Carlitz:1974}).
In the same volume of Crelle's journal, Lampe~\cite{Lampe:1878} immediately remarked
that Stern's formula would follow as a special case from considering products of the
general form $S_j S_k S_\ell \dotsm$.

Faulhaber, being very familiar with the summation technique of figurate numbers,
respectively, binomial coefficients, was finally led to Theorem~\ref{thm:Faulhaber-Jacobi}.
As mentioned earlier, this was first proved by Jacobi~\cite{Jacobi:1834} in 1834,
who used an application of the Euler–Maclaurin summation formula.
The latter formula was independently found by Euler~\cite{Euler:1741} and
Maclaurin \cite[\S\,II.828, p.\,672\,ff.]{Maclaurin:1742} around 1735,
which instantly implies the power sum formula~\eqref{eq:sum-poly} as a trivial case.
Indeed, Euler~\cite[p.\,17]{Euler:1741} recorded formulas of $S_n(x)$ in
terms of~$x$ up to $n=16$ as an application.

So far, we have neglected the exponents $n$ in the even case.
Faulhaber~\cite{Faulhaber:1631} was aware of the following transformation rule
between power sums regarding odd exponents $n \geq 3$ and even exponents $n-1$
(cf.~Knuth~\cite[p.\,281]{Knuth:1993}, Schneider~\cite[Sec.\,7.3, p.\,140\,ff.]{Schneider:1993}):

\begin{equation} \label{eq:S-odd-even}
\begin{aligned}
  S_n(x) &= \sum_{\nu=2}^{(n+1)/2} a_\nu \, S_1(x)^\nu, \\
  S_{n-1}(x) &= \frac{2x-1}{2n} \sum_{\nu=2}^{(n+1)/2} \nu a_\nu \, S_1(x)^{\nu-1}.
\end{aligned}
\end{equation}
Not surprisingly, the extra factor $2x-1$ in the formula of $S_{n-1}(x)$ originates from
\begin{equation} \label{eq:S2-sum}
  S_2(x) = \tfrac{1}{6} x(x-1)(2x-1) = \tfrac{1}{3} S_1(x) (2x-1).
\end{equation}

Starting with the relations in \eqref{eq:binom-sqr} and \eqref{eq:S1-3},
which can be iteratively continued with higher powers, and along
with \eqref{eq:S-odd-even}, it is very plausible to believe that Faulhaber
might have possessed a \textsl{proof} of his Theorem~\ref{thm:Faulhaber-Jacobi},
only lacking in rigorous arguments of induction.

Further elementary relations among power sums are given by
Bachmann~\cite[pp.\,18--20]{Bachmann:1910}.
Carlitz~\cite{Carlitz:1962} showed that $S_n(x)$ cannot be expressed as a polynomial
in $S_\ell(x)$ for $2 \leq \ell < n$. Beardon~\cite{Beardon:1996}
considered polynomial relations between power sums and gave a simple proof of
Theorem~\ref{thm:Faulhaber-Jacobi} by induction that covers the aforementioned
arguments. For a different proof see Prasolov~\cite[Thm.\,3.5.3, p.\,118]{Prasolov:2010}.

Actually, Faulhaber developed  more advanced formulas. He also considered
$r$-fold sums, i.e., iterated sums of sums, which can be defined as follows:
\[
  \SF_{n,1}(m) = 1^n + 2^n + \cdots + m^n \andq
  \SF_{n,r+1}(m) = \sum_{\nu=1}^{m} \SF_{n,r}(\nu) \quad (r \geq 1).
\]
Faulhaber claimed that the sum $\SF_{n,r}(m)$ can be mainly expressed as a
polynomial in $m(m+r)/2$, supported by examples for $\SF_{6,4}(m)$ and
$\SF_{8,2}(m)$ among others (cf.~Schneider~\cite[Sec.\,7.3.3, p.\,153\,ff.]{Schneider:1993}).
Knuth~\cite[Thm., p.\,280]{Knuth:1993} gave a general proof of this relation,
including Theorem~\ref{thm:Faulhaber-Jacobi} as a special case as well.

\begin{theorem}[Faulhaber--Knuth \cite{Knuth:1993} (1993)]
\label{thm:Faulhaber-Knuth}
For ${n, r \geq 1}$, there exist polynomials $g_{n,r} \in \QQ[x]$ such that
$\SF_{n,r}(x) = g_{n,r}( x(x+r) ) \, \SF_{d,r}(x)$, where $d = 1$
if $n$ is odd, and $d=2$ otherwise.
\end{theorem}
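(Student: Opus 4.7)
The plan is to base the proof on two structural facts for $\SF_{n,r}$ -- a list of integer roots and a reflection identity -- and then convert the quotient $\SF_{n,r}/\SF_{d,r}$ into a polynomial in $u := x(x+r)$. Both facts are proved by induction on $r$ using the recurrence
\[
  \SF_{n,r}(x) - \SF_{n,r}(x-1) = \SF_{n,r-1}(x),
\]
where it is convenient to set $\SF_{n,0}(x) := x^n$. First I would show that $\SF_{n,r}(-k) = 0$ for every integer $0 \leq k \leq r$, which is immediate by induction from the recurrence applied to both $\SF_{n,r}$ and $\SF_{n,r-1}$. Next I would prove the reflection identity
\[
  \SF_{n,r}(-r-x) = (-1)^{n+r}\,\SF_{n,r}(x).
\]
The auxiliary polynomial $\tilde{\SF}(x) := \SF_{n,r}(-r-x) - (-1)^{n+r}\,\SF_{n,r}(x)$ satisfies $\tilde{\SF}(x) - \tilde{\SF}(x-1) = 0$ by the inductive hypothesis at level $r-1$, hence is constant; evaluating at $x = 0$ and using the integer-root result (both $\SF_{n,r}(0) = 0$ and $\SF_{n,r}(-r) = 0$) shows this constant is zero.

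From these two facts I would read off the full root structure of $\SF_{d,r}$, which has degree $d+r$. For $d=1$, the closed form $\SF_{1,r}(x) = \binom{x+r}{r+1}$ gives simple roots exactly at $0, -1, \ldots, -r$. For $d=2$, the integer points $0, -1, \ldots, -r$ are again simple roots, and an additional root at $x = -r/2$ is forced: if $r$ is odd, the antisymmetry $\SF_{2,r}(-r-x) = -\SF_{2,r}(x)$ yields a new simple root; if $r$ is even, the point $-r/2$ already appears among the integer roots, and differentiating $\SF_{2,r}(x) = \SF_{2,r}(-r-x)$ at $x = -r/2$ upgrades its multiplicity to two. The total count matches $\deg \SF_{2,r} = r+2$ exactly, so these are all the roots.

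Applying the same analysis to $\SF_{n,r}$ (for $n$ of the same parity as $d$) shows that every root of $\SF_{d,r}$ reappears in $\SF_{n,r}$ with at least the same multiplicity. Hence $\SF_{d,r}$ divides $\SF_{n,r}$ in $\QQ[x]$, and the quotient $h(x) := \SF_{n,r}(x)/\SF_{d,r}(x)$ is a polynomial. The two reflection identities combine to yield $h(-r-x) = (-1)^{n-d}\,h(x) = h(x)$, since $n-d$ is even; setting $y := x + r/2$, the polynomial $h$ is even in $y$, hence a polynomial in $y^2 = u + r^2/4$, and therefore a polynomial $g_{n,r}(u) \in \QQ[u]$ with $\SF_{n,r}(x) = g_{n,r}(x(x+r))\,\SF_{d,r}(x)$. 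The delicate point in the argument is the even--even case ($n$ and $r$ both even), where one must exhibit $-r/2$ as a \emph{double} root of both $\SF_{2,r}$ and $\SF_{n,r}$; it is precisely the interplay between the reflection identity and the integer-root list, via a single differentiation of the symmetry, that allows this upgrade and preserves divisibility.
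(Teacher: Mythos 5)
Your proposal is correct, and every step holds up under scrutiny: the recurrence $\SF_{n,r}(x)-\SF_{n,r}(x-1)=\SF_{n,r-1}(x)$ (with $\SF_{n,0}(x)=x^n$) is a genuine polynomial identity; the periodicity-implies-constant argument for the reflection identity is valid, and the sign $(-1)^{n+r}$ checks out (e.g.\ $\SF_{2,1}(-1-x)=-\SF_{2,1}(x)$); the root bookkeeping, including the differentiation of the symmetry at $x=-r/2$ in the even--even case, correctly forces $\SF_{d,r}\mid\SF_{n,r}$ in $\QQ[x]$; and the evenness of the quotient in $y=x+r/2$ lands it in $\QQ[x(x+r)]$ as required. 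Be aware, though, that the paper itself contains no proof of this statement---it is quoted as Knuth's theorem from \cite{Knuth:1993}---so the only meaningful comparison is with Knuth's original argument. Your route is essentially a self-contained reconstruction of it: Knuth works with \emph{$r$-reflective} and \emph{anti-$r$-reflective} functions (precisely your identity $\SF_{n,r}(-r-x)=(-1)^{n+r}\SF_{n,r}(x)$, organized by parity of $n+r$), shows that repeated summation propagates these symmetry classes starting from the even/odd function $x^n$, and then extracts the factor $\SF_{1,r}$ or $\SF_{2,r}$ from the roots at $0,-1,\dots,-r$ together with the forced root at $-r/2$, exactly as you do. One small presentational point: when you divide the two reflection identities to conclude $h(-r-x)=h(x)$, this is a priori an identity of rational functions, and you should add the half-sentence that since $h$ is a polynomial it holds identically; likewise it is worth stating explicitly that divisibility over $\CC$ (where the roots live) descends to divisibility in $\QQ[x]$ because polynomial division preserves the coefficient field. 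Neither remark affects the substance: the proof is complete.
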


Finally, it should be noted that the power sum $S_n(m)$ is defined differently
in the literature, depending on the summation whether one counts up to $m-1$ or
$m$; we call them \textsl{modern} and \textsl{old} conventions, respectively.
From \eqref{eq:sum-poly}--\eqref{eq:bern-values}, we infer that
\begin{equation} \label{eq:sum-coeff}
  S_n(x) = \frac{x^{n+1}}{n+1} - \frac{x^n}{2}  + \dotsb \quad (n \geq 1).
\end{equation}
Since we have by~\eqref{eq:sum-def} the functional equation
$S_n(x+1) = S_n(x) + x^n$ for positive integers~$x$, $S_n(x+1)$ only differs
from $S_n(x)$ by a sign change from $\BN_1 = -\frac{1}{2}$ to $+\frac{1}{2}$
in the coefficient of $x^n$ in \eqref{eq:sum-coeff}.
Under the substitution $x \mapsto x+1$, there would be also a sign change in
formulas~\eqref{eq:S-odd-even} and~\eqref{eq:S2-sum}.
However, this sign change takes no effect on the Faulhaber polynomials~$F_n$,
since we would move from $\binom{x}{2} = x(x-1)/2$ to $\binom{x+1}{2} = x(x+1)/2$
in all identities. We may use the notation $\SF_n(x) = S_n(x+1)$ to avoid
ambiguity when needed.

Faulhaber~\cite{Faulhaber:1631} and Jacobi~\cite{Jacobi:1834}, as well as
Knuth~\cite{Knuth:1993}, who refers to Faulhaber's and Jacobi's work, used the
\textsl{old} convention $\SF_n(x)$, being more natural when counting from $1$
to $x$. In contrast, the \textsl{modern} convention is more convenient when
working with relations between the power sum $S_n(x)$ and the Bernoulli
polynomials $\BN_n(x)$ via \eqref{eq:sum-poly}.


\section{Historical remarks and results}
\label{sec:jac}

The Faulhaber polynomials $F_n(y)$ can be defined differently, depending on their
relations to the power sums $S_n(x)$ and the substitution $y = q(x)$ with
a quadratic polynomial $q(x)$.

\subsection*{Jacobi}

In 1834, Jacobi~\cite[p.\,271]{Jacobi:1834} gave a list of Faulhaber
polynomials. Being compatible with Jacobi's results,
Knuth~\cite[Sec.\,7, p.\,287]{Knuth:1993} used certain coefficients
$A_k^{(m)}$ for these polynomials. Turning to the \textsl{modern} convention,
we set $u = 2 S_1(x) = x(x-1)$ and $m = (n+1)/2$ for odd $n \geq 3$.
Then one reads
\begin{equation} \label{eq:Jacobi-poly}
  S_n(x) = \frac{1}{n+1} \mleft( A_0^{(m)} u^m + \dotsb + A_{m-1}^{(m)} u \mright),
\end{equation}
where the factor $\frac{1}{n+1}$ matches~\eqref{eq:sum-poly}.
In particular, one has $A_0^{(m)} = 1$ and $A_{m-1}^{(m)} = 0$.
This gives a connection with the Bernoulli polynomials by
\[
  \BN_{n+1}(x) - \BN_{n+1}
    = u^2 \mleft( A_0^{(m)} u^{m-2} + \dotsb + A_{m-2}^{(m)} \mright).
\]
See Table~\ref{tbl:S-poly-odd} for a list of Jacobi's polynomials.
Knuth~\cite[Eq.\,($*$), p.\,288]{Knuth:1993} further showed that the
coefficients obey the recurrence
\begin{equation} \label{eq:Knuth-recurr}
  \sum_{j=0}^{k} \binom{m-j}{2k+1-2j} A_j^{(m)} = 0 \quad (1 \leq k < m),
\end{equation}
while the following recurrence goes back to Jacobi~\cite[p.\,271]{Jacobi:1834},
cf.~\cite[Eq.\,($**$), p.\,289]{Knuth:1993}, simplified by using binomial
coefficients:
\begin{equation} \label{eq:Jacobi-recurr}
  \binom{2m}{2} A_k^{(m-1)}
    = \binom{2m-2k}{2} A_k^{(m)} + \binom{m-k+1}{2} A_{k-1}^{(m)}.
\end{equation}
In Section~\ref{sec:main}, recurrence~\eqref{eq:Knuth-recurr} will return
in a modified form.

We give now some historical remarks including some curiosities. While
Bernoulli~\cite[p.\,95]{Bernoulli:1713} referred to Faulhaber in 1713, and
K\"astner~\cite{Kaestner:1799} gave an overview of Faulhaber's work in 1799,
it seems that Faulhaber had been forgotten since then for a long time until
around 1980, according to Edwards~\cite{Edwards:1982}. However, in Cantor's
summary of the history of mathematics of 1892, Faulhaber was mentioned
numerously (cf.~\cite[Vol.\,2, pp.\,683--684]{Cantor:1880}).

Jacobi's paper~\cite{Jacobi:1834} of 1834 gives no credit to Faulhaber,
and only two sparse references to Maclaurin~\cite{Maclaurin:1742} and
Poisson~\cite{Poisson:1826}, whereas results of Euler are implicitly assumed.
For example, one finds in the paper Euler's zeta value $\zeta(2) = \pi^2/6$
(today Riemann's zeta function) and certain coefficients $\alpha_m$.
Citing Euler~\cite{Euler:1740,Euler:1770} thoroughly would show that
the coefficients $\alpha_m$ are effectively connected with the Bernoulli numbers
via $\alpha_m = \norm{\BN_{2m}}/(2m)!$ (in modern notation).
Apparently, it seems that subsequent related publications in the period of
1834--1980 were influenced only by Jacobi's paper.

In the correspondence between Gauss and Schumacher, which lasted for many years,
Schumacher~\cite[No.\,1147, pp.\,298--299]{Gauss&Schumacher:1863} reported on
April 8, 1847, the following. Jacobi wrote to Schumacher with some pride
(translated analogously from German):

\begin{quotation}
\begin{center}
\phrase{Regarding power sums, the relation $\SF_1^2(x) = \SF_3(x)$ is
certainly unique.\\
However, the sums of two and more of these power sums multiplied\\
by certain coefficients may be equal to a power of a single one, e.g.,
\[
  \tfrac{1}{2}( \SF_7(x) + \SF_5(x) ) = \SF_3^2(x) = \SF_1^4(x).
\]
The fact that $\SF_1^2(x) = \SF_3(x)$ can be found already in\\
Luca di Borgo's Summa Arithmetica.}
\end{center}
\end{quotation}

The latter reference means Luca Pacioli's \emph{Summa de arithmetica geometria}
\cite{Pacioli:1494} of 1494, which is written in Italian and gives a summary
of Renaissance mathematics (cf. Alten et al. \cite[pp. 218--220]{ANFSSW:2003}).

Now we arrive at an oddity. Edwards~\cite{Edwards:1986} remarked that he
found a copy of Faulhaber's \emph{Academia Algebrae}~\cite{Faulhaber:1631} at
Cambridge University in 1981, which was surprisingly \phrase{once the property
of Jacobi}. It is not known when Jacobi owned this book or if he even read it.
The above letter was written four years before his death.

For more details of Jacobi's life and achievements in mathematics see
Dirichlet's~\cite{Dirichlet:1856} obituary for Jacobi (1804--1851),
also see O'Connor and Robertson~\cite{MacTutor:Jacobi}.

\subsection*{Schr\"oder}

In 1867, Schr\"oder~\cite{Schroeder:1867} discussed the theory of the
Euler-Maclaurin summation formula and the Bernoulli polynomials, also extending
some results of Jacobi. It seems that this reference probably did not become
well known, because Schr\"oder's treatise was published in a special volume of
a cantonal school in Z\"urich. However, one finds this reference also in
N{\o}rlund~\cite{Norlund:1924}, but without any applications.

Now we briefly show the steps of Schr\"oder's proof for his version of the
Faulhaber polynomials, where we adapt and simplify his notation slightly.
Schr\"oder~\cite[\S\,4, pp.\,15--17]{Schroeder:1867} used the polynomials
$P_n(x)$ defined by
\begin{equation} \label{eq:Schroeder-poly}
  S_n(x) = n! \, P_n(x) = \frac{1}{n+1} ( \BN_{n+1}(x) - \BN_{n+1} ) \quad (n \geq 1).
\end{equation}
It shall turn out that these polynomials are compatible with Jacobi's except
for sign changes of the coefficients.

In 1848, Raabe~\cite[Sec.\,2, p.\,17\,ff.]{Raabe:1848}
(cf.~N{\o}rlund~\cite[Eq.\,(18), p.\,128]{Norlund:1922})
showed for the Bernoulli polynomials the multiplication formula
\begin{equation} \label{eq:bern-multi}
  \BN_n( kx ) = k^{n-1} \sum_{\nu=0}^{k-1} \BN_n \mleft( x + \frac{\nu}{k} \mright)
    \quad (n \geq 0, k \geq 1).
\end{equation}
(Schr\"oder referred to both publications~\cite{Raabe:1848,Raabe:1851} (1848,1851)
of Raabe, who first introduced and used the \textdef{Jacob Bernoulli function}.)
One then obtains with $k=2$ that
\begin{equation} \label{eq:P-sym}
  P_n(x) = \frac{1}{2^n} P_n \left( 2\left( x - \frac{1}{2} \right) \right)
    - P_n \left( x - \frac{1}{2} \right)
    + \left( \frac{1}{2^n} - 2 \right) \frac{\BN_{n+1}}{(n+1)!},
\end{equation}
so the polynomial $P_n(x)$ can be expressed as a function of $x - \frac{1}{2}$.
Actually, a more careful analysis reveals that $P_{2m+1}(x)$ and
$P_{2m}(x)/(x - \frac{1}{2})$ are functions of $(x - \frac{1}{2})^2$.
See Section~\ref{sec:appr} for a simple approach. Using the substitution
$(x - \frac{1}{2})^2 = \frac{1}{4}-\xi$ for $\xi = -x(x-1)$,
Schr\"oder found expressions in terms of $\xi$ as follows.

\begin{theorem}[Schr\"oder~\cite{Schroeder:1867} (1867)]
\label{thm:Schroeder}
For $m \geq 1$ with $\xi = -x(x-1)$, we have
\begin{align*}
  P_{2m+1}(x) &= \frac{(-1)^{m+1}}{(2m+2)!}
    \sum_{k=2}^{m+1} \beta_{m-k+1}^{(m)} \, \xi^k, \\
  \frac{P_{2m}(x)}{x - \frac{1}{2}} &= \frac{(-1)^{m}}{(2m+1)!}
    \sum_{k=1}^{m} \gamma_{m-k}^{(m)} \, \xi^k,
\end{align*}
where
\begin{align*}
  \beta_k^{(m)}  &= \frac{(-1)^{k+1}}{4^k} \sum_{\nu=0}^{k}
    \binom{2m+2}{2\nu} \binom{m-\nu+1}{k-\nu} ( 4^\nu - 2 ) \BN_{2\nu}, \\
  \gamma_k^{(m)} &= \frac{(-1)^{k+1}}{4^k} \sum_{\nu=0}^{k}
    \binom{2m+1}{2\nu} \binom{m-\nu}{k-\nu} ( 4^\nu - 2 ) \BN_{2\nu}.
\end{align*}
In particular, all coefficients $\beta$ and $\gamma$ are positive rational
numbers. Moreover,
\[
  (m+1) \, \gamma_k^{(m)} = (m-k+1) \, \beta_k^{(m)}.
\]
\end{theorem}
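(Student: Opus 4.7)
My plan is to derive the two expansions by Taylor-expanding the relevant Bernoulli polynomials at the center of symmetry $x = \tfrac{1}{2}$ and then substituting $(x - \tfrac{1}{2})^2 = \tfrac{1}{4} - \xi$. The first ingredient is the value $\BN_{2\nu}(\tfrac{1}{2}) = -(4^\nu - 2)/4^\nu \cdot \BN_{2\nu}$ for $\nu \geq 0$, which follows from the duplication formula \eqref{eq:bern-multi} with $k = 2$ applied at $x = 0$, and which already accounts for the factor $(4^\nu - 2)\BN_{2\nu}$ appearing in the claimed formulas. Combining this with $\BN_n^{(k)}(x) = \frac{n!}{(n-k)!}\BN_{n-k}(x)$ and the vanishing $\BN_j(\tfrac{1}{2}) = 0$ for odd $j \geq 1$, the Taylor series around $\tfrac{1}{2}$ collapses to
\[
  \BN_{2m+2}(\tfrac{1}{2}+t) = -\sum_{\nu=0}^{m+1} \binom{2m+2}{2\nu}\,\frac{4^\nu - 2}{4^\nu}\, \BN_{2\nu}\, t^{2m+2-2\nu},
\]
with an analogous odd-power expansion for $\BN_{2m+1}(\tfrac{1}{2}+t)$, in accordance with the symmetries $\BN_{2m+2}(1-x) = \BN_{2m+2}(x)$ and $\BN_{2m+1}(1-x) = -\BN_{2m+1}(x)$ anticipated in Section~\ref{sec:appr}.

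Second, I would substitute $t^{2m+2-2\nu} = (\tfrac{1}{4}-\xi)^{m+1-\nu}$, expand by the binomial theorem, swap the order of summation, and collect the coefficient of $\xi^k$. After the reindexing $k \mapsto m - k + 1$ (for $P_{2m+1}$) respectively $k \mapsto m - k$ (for $P_{2m}/(x-\tfrac{1}{2})$), the emerging coefficients coincide on the nose with $\beta_k^{(m)}$ and $\gamma_k^{(m)}$ as stated. The constant-in-$\xi$ contribution must cancel in both cases: in the odd case against the offset $-\BN_{2m+2}/(2m+2)!$ coming from \eqref{eq:Schroeder-poly}, and in the even case by itself because $\BN_{2m+1}(0) = 0$ for $m \geq 1$. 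Both cancellations are just the Taylor expansion evaluated at $t = -\tfrac{1}{2}$ (equivalently $x = 0$), paired with $P_n(0) = 0$, and this is precisely what forces the sums to start at $k = 2$ and $k = 1$, respectively.

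The auxiliary identity $(m+1)\, \gamma_k^{(m)} = (m-k+1)\, \beta_k^{(m)}$ is then a routine term-by-term check, since the ratio of corresponding summands is
\[
  \frac{\binom{2m+2}{2\nu}\binom{m-\nu+1}{k-\nu}}{\binom{2m+1}{2\nu}\binom{m-\nu}{k-\nu}}
   = \frac{m+1}{m+1-\nu}\cdot\frac{m-\nu+1}{m-k+1}
   = \frac{m+1}{m-k+1},
\]
independent of $\nu$, so the identity follows on multiplying through.

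The main obstacle is the positivity of $\beta_k^{(m)}$ and $\gamma_k^{(m)}$: since $(4^\nu - 2)\BN_{2\nu}$ alternates in sign with $\nu$, positivity is not visible from the formula itself. My plan is to cross-reference Jacobi's representation \eqref{eq:Jacobi-poly}. Substituting $u = -\xi$ there and matching the $\xi$-expansion of $P_{2m+1}(x)$ from both theorems yields the clean identification
\[
  \beta_k^{(m)} = (-1)^k A_k^{(m+1)} \qquad (0 \leq k \leq m-1),
\]
so positivity reduces to the sign pattern $\operatorname{sgn}(A_k^{(m+1)}) = (-1)^k$. This in turn I would establish by induction on $m$ from Jacobi's recurrence \eqref{eq:Jacobi-recurr}, rearranged as
\[
  \binom{2m-2k}{2}\, A_k^{(m)}
    = \binom{2m}{2}\, A_k^{(m-1)} - \binom{m-k+1}{2}\, A_{k-1}^{(m)};
\]
under the inductive hypothesis $\operatorname{sgn}(A_k^{(m-1)}) = (-1)^k$ and $\operatorname{sgn}(A_{k-1}^{(m)}) = (-1)^{k-1}$, the two terms on the right have the \emph{same} sign $(-1)^k$, so their combination remains strictly of sign $(-1)^k$, giving $\beta_k^{(m)} > 0$. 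Positivity of $\gamma_k^{(m)}$ then follows from the just-proved identity $(m+1)\gamma_k^{(m)} = (m-k+1)\beta_k^{(m)}$.
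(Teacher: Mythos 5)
Your overall route is sound and, in substance, coincides with the paper's own treatment. The expansion step --- Taylor expansion of $\BN_{2m+2}$ and $\BN_{2m+1}$ about $x=\tfrac12$ using $\BN_{2\nu}(\tfrac12)=(2^{1-2\nu}-1)\BN_{2\nu}$ from \eqref{eq:bern-val-half}, then substituting $(x-\tfrac12)^2=\tfrac14-\xi$ and collecting powers of $\xi$ --- is exactly the ``simple approach'' of Section~\ref{sec:appr} (Lemma~\ref{lem:bern-refl}), and your bookkeeping does reproduce $\beta_k^{(m)}$ and $\gamma_k^{(m)}$ as stated. For positivity, the paper inducts on Schr\"oder's recurrence \eqref{eq:Schroeder-recurr} together with the initial values $\beta_0^{(m)}=1$, whereas you pass to Jacobi's coefficients through $\beta_k^{(m)}=(-1)^kA_k^{(m+1)}$ (this is precisely \eqref{eq:coeff-A-b}) and run a sign induction on Jacobi's recurrence \eqref{eq:Jacobi-recurr}; since the paper itself observes that the two recurrences are equivalent under this identification, your induction is the same argument in different clothing, and both versions rest on a recurrence quoted from the literature rather than proved. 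Your term-by-term verification of $(m+1)\gamma_k^{(m)}=(m-k+1)\beta_k^{(m)}$ is correct and is a genuine addition, since the paper states this identity without proof.

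There is, however, one genuine (though small and repairable) gap: you assert that cancellation of the constant term, ``paired with $P_n(0)=0$, is precisely what forces the sums to start at $k=2$ and $k=1$, respectively.'' That is not enough in the odd case. Vanishing of the $\xi^0$ coefficient only forces the sum for $P_{2m+1}$ to start at $k\geq 1$; the stated theorem starts at $k=2$, which additionally requires the coefficient of $\xi^1$ to vanish, i.e.\ $\beta_m^{(m)}=0$, and this does not follow from $P_{2m+1}(0)=0$. The repair is one line: writing $P_{2m+1}(x)=\sum_{k\geq 1}c_k\,\xi^k$ and differentiating at $x=0$ (where $\xi=0$ and $\xi'(0)=1$) gives
\[
  c_1 = P'_{2m+1}(0) = \frac{\BN_{2m+1}(0)}{(2m+1)!} = \frac{\BN_{2m+1}}{(2m+1)!} = 0
  \quad (m \geq 1),
\]
using \eqref{eq:Schroeder-poly} and \eqref{eq:sum-deriv}; alternatively, $A_m^{(m+1)}=0$ (noted after \eqref{eq:Jacobi-poly}) yields $\beta_m^{(m)}=0$ through your identification. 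One further detail to watch in the sign induction: at the boundary $k=m-2$ the term $A_k^{(m-1)}=A_{m-2}^{(m-1)}$ vanishes, so strictness of the sign of $A_k^{(m)}$ must come from the other term $-\binom{m-k+1}{2}A_{k-1}^{(m)}$, which is nonzero by the inner induction on $k$; with that observation your induction closes.
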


Table~\ref{tbl:S-poly-even} shows the first few polynomials $S_n(x) = n! \, P_n(x)$
in terms of $x$ and $\xi$ for even $n \geq 2$. Schr\"oder also showed the
recurrence relation (again simplified by binomial coefficients) that
\begin{equation} \label{eq:Schroeder-recurr}
  \binom{2m-2k+2}{2} \beta_k^{(m)} = \binom{m-k+2}{2} \beta_{k-1}^{(m)}
    + \binom{2m+2}{2} \beta_k^{(m-1)}.
\end{equation}
Together with the positive values
\[
  \beta_0^{(m)} = \beta_0^{(m-1)} = 1, \quad
    \beta_{m-1}^{(m)} = \frac{1}{2} \beta_{m-2}^{(m)}
    = \binom{2m+2}{2} \norm{\BN_{2m}},
\]
it easily follows by induction that all coefficients $\beta$, and so $\gamma$,
are indeed positive rational numbers, as claimed. We now compare the
polynomials in~\eqref{eq:Jacobi-poly} and~\eqref{eq:Schroeder-poly} of Jacobi
and Schr\"oder, respectively. Considering $\xi = -u$ and the different indexing
of the coefficients~$A$ and~$\beta$, this yields the relation
\begin{equation} \label{eq:coeff-A-b}
  A_k^{(m+1)} = (-1)^{k}  \beta_k^{(m)}.
\end{equation}
By plugging $A_k^{(m+1)}$ into \eqref{eq:Schroeder-recurr}, we observe, as expected,
that Schr\"oder's recurrence~\eqref{eq:Schroeder-recurr} is equivalent
to Jacobi's recurrence~\eqref{eq:Jacobi-recurr} shifted by $m \mapsto m+1$.

\subsection*{Prouhet}

In 1859, Prouhet stated two exercises in Sturm's
\emph{Cours d'Analyse}~\cite[Note\,VI, Ex.\,7--8, pp.\,359--360]{Sturm:1859},
without any references, as follows (essentially this is
Theorem~\ref{thm:Faulhaber-Jacobi}, exercises combined and translated from French):

\begin{quotation}
\begin{center}
\phrase{If $m$ is odd, then $\SF_m = n^2(n+1)^2 \varphi(n(n+1))$.
If $m$ is even, then\\ $\SF_m = n(n+1)(2n+1) \varphi(n(n+1))$.
Show that $\varphi$ is an entire function.}
\end{center}
\end{quotation}

In 1958, van der Blij~\cite{Blij:1958} gave a proof of this combined exercise,
only referring to the 6th edition of~\cite{Sturm:1859} of 1880 and nothing else
(the reviewer on zbMath remarks that a reference to N{\o}rlund~\cite{Norlund:1924}
would be helpful). The proof is similar to that given above using Bernoulli polynomials,
the multiplication formula \eqref{eq:bern-multi}, and the substitution
$t^2 = x(x-1)+\frac{1}{4}$.

\subsection*{Glaisher}

In 1899, Glaisher~\cite{Glaisher:1899} considered several formulas for $\SF_n(x)$
in terms of $x^2+x$ (also referring to Jacobi~\cite{Jacobi:1834} in this case),
$x+\frac{1}{2}$, $x+a$, and $x^2+2ax$. Some years later, Joffe~\cite{Joffe:1915}
computed the coefficients of $\SF_n(x)$ up to $n=25$ for the usual expansions in
terms of $x$, $2x+1$, and $x^2+x$.


\section{A simple approach}
\label{sec:appr}

In this section, we compare Schr\"oder's symmetry argument in the proof of
Theorem~\ref{thm:Schroeder}, which arises from the multiplication
formula~\eqref{eq:bern-multi}, with the symmetry property of $\BN_n(x)$,
which follows from the reflection formula $\BN_n(1-x) = (-1)^n \, \BN_n(x)$
(see \eqref{eq:bern-refl}). This gives a simple approach to Theorem~\ref{thm:Schroeder}.
Define the polynomials
\begin{equation} \label{eq:bern-star}
  \BS_n(x) = \BN_n(x) + \frac{n}{2} x^{n-1} \quad (n \geq 1),
\end{equation}
where the term involving $\BN_1$ is canceled out compared to~\eqref{eq:bern-poly}.
Obviously, since $\BN_n = 0$ for odd $n \geq 3$, it follows from~\eqref{eq:bern-poly}
that these polynomials have a reflection relation around $x=0$:
\[
  \BS_n(x) = (-1)^n \, \BS_n(-x).
\]
More precisely, the polynomial $\BS_n(x)$ is an odd (respectively, even)
function, if and only if $n$ is odd (respectively, even).
Further define the functions
\[
  \BH_{n,k}(x) = k^{1-n} \, \BN_n( kx ) - \BN_n( x ) \quad (n,k \geq 1).
\]
By the multiplication formula~\eqref{eq:bern-multi}, we have the relation
\begin{equation} \label{eq:bern-frac}
  \BH_{n,k}(x) = \sum_{\nu=1}^{k-1} \BN_n \mleft( x + \frac{\nu}{k} \mright).
\end{equation}
From \eqref{eq:bern-star}, we also deduce that
\[
  \BH_{n,k}(x) = k^{1-n} \, \BS_n( kx ) - \BS_n( x ),
\]
thus the symmetry property of $\BS_n(x)$ transfers to
\[
  \BH_{n,k}(x) = (-1)^n \, \BH_{n,k}(-x).
\]

In the special case $k=2$, we easily obtain the symmetry argument of Schr\"oder
in~\eqref{eq:P-sym} using the function $\BH_{n,2}(x)$. We further need the
well-known values (cf.~N{\o}rlund~\cite[Eq.\,(22), p.\,128]{Norlund:1922}),
namely, \vspace*{-1ex}
\begin{equation} \label{eq:bern-val-half}
  \BN_n \mleft( \tfrac{1}{2} \mright) = ( 2^{1-n} - 1 ) \BN_n \quad (n \geq 0),
\end{equation}
which follow from~\eqref{eq:bern-multi} with $k=2$ and $x=0$. This allows a
combined formulation as follows.
\begin{lemma} \label{lem:bern-refl}
For $n \geq 1$, we have
\begin{equation} \label{eq:bern-half}
  \BN_n(x) = \BH_{n,2} \mleft( x - \tfrac{1}{2} \mright)
    = \sum_{\substack{\nu=0\\2 \mids \nu}}^{n} \binom{n}{\nu}
    \BN_{\nu}\mleft( \tfrac{1}{2} \mright) \,
    \mleft( x - \tfrac{1}{2} \mright)^{\!n-\nu},
\vspace*{-3pt}
\end{equation}
where $\BN_n(x)$ has a reflection relation around $x=\frac{1}{2}$.
\end{lemma}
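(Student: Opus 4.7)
\smallskip

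The plan is to recognize the two equalities as a short chain: the first comes directly from Raabe's multiplication formula, and the second is a Taylor expansion around $1/2$ whose odd terms vanish.

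First I would prove $\BN_n(x) = \BH_{n,2}(x-\tfrac12)$. Applying \eqref{eq:bern-frac} with $k=2$, the sum on the right has the single term $\nu=1$, so $\BH_{n,2}(y) = \BN_n(y+\tfrac12)$; substituting $y = x-\tfrac12$ gives the first equality. This is essentially free from what is already set up in Section~\ref{sec:appr}.

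For the second equality I would expand $\BN_n(x) = \BN_n(\tfrac12 + (x-\tfrac12))$ using the Bernoulli addition formula
\[
  \BN_n(u+v) = \sum_{\nu=0}^{n} \binom{n}{\nu} \BN_\nu(u)\, v^{n-\nu},
\]
which follows at once from the generating function $\frac{te^{(u+v)t}}{e^t-1} = \frac{t e^{ut}}{e^t-1}\, e^{vt}$ (equivalently, it is the Taylor expansion of $\BN_n$ at $u$, using $\BN_n'=n\BN_{n-1}$). Taking $u=\tfrac12$ and $v=x-\tfrac12$ produces the displayed sum ranging over all $\nu$ from $0$ to $n$. To reduce the sum to the even indices, I would invoke \eqref{eq:bern-val-half}: for odd $\nu\ge 3$ we have $\BN_\nu=0$, while for $\nu=1$ the prefactor $2^{1-\nu}-1=0$ kills the term even though $\BN_1\neq 0$. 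Hence $\BN_\nu(\tfrac12)=0$ for every odd $\nu\ge 1$, and only even $\nu$ survive, yielding the stated formula.

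Finally, the reflection claim is a direct consequence of the resulting shape: since every surviving exponent $n-\nu$ has the same parity as $n$, the substitution $x\mapsto 1-x$ (which sends $x-\tfrac12 \mapsto -(x-\tfrac12)$) multiplies $\BN_n(x)$ by $(-1)^n$, recovering $\BN_n(1-x)=(-1)^n \BN_n(x)$. There is no real obstacle in this proof; the only subtlety worth flagging explicitly is the $\nu=1$ case, where the vanishing comes from $(2^{1-\nu}-1)$ rather than from $\BN_\nu$ itself.
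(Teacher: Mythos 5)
Your proposal is correct and follows essentially the same route as the paper: the first equality is the $k=2$ case of \eqref{eq:bern-frac}, and the second is the translation formula \eqref{eq:bern-trans} expanded around $x=\frac{1}{2}$, with odd-index terms killed via \eqref{eq:bern-val-half} (including the $\nu=1$ case, which the paper handles the same way). The only cosmetic differences are that you re-derive the addition formula from the generating function instead of citing \eqref{eq:bern-trans}, and you spell out the parity argument for the reflection relation, which the paper leaves implicit.
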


\begin{proof}
On the one hand, we obtain by the translation formula
(see \eqref{eq:bern-trans}) that
\[
  \BN_n(x) = \BN_n \mleft( x - \tfrac{1}{2} + \tfrac{1}{2} \mright)
    = \sum_{\nu=0}^{n} \binom{n}{\nu} \BN_{\nu}\mleft( \tfrac{1}{2} \mright) \,
    \mleft( x - \tfrac{1}{2} \mright)^{\!n-\nu}.
\]
Since $\BN_\nu \mleft( \frac{1}{2} \mright) = 0$ for $\nu=1$ as well as for odd
$\nu \geq 3$, the sum runs only over even indices. This is also a consequence
of the reflection formula $\BN_\nu(\frac{1}{2}) = (-1)^\nu \, \BN_\nu(\frac{1}{2})$
without the knowledge of the values of $\BN_\nu(\frac{1}{2})$.
On the other hand, it follows from~\eqref{eq:bern-frac} that
$\BH_{n,2} \mleft( x - \frac{1}{2} \mright) = \BN_n(x)$.
\end{proof}

The one-line proof of~\eqref{eq:bern-half} trivially implies that $\BN_{2n}(x)$
and $\BN_{2n+1}(x)/(x - \frac{1}{2})$ are functions of $(x - \frac{1}{2})^2$
as Schr\"oder showed in a more difficult way by~\eqref{eq:P-sym}.
Actually, the right-hand side of~\eqref{eq:bern-half} is well known.
For example, this formula was conveniently derived by
N{\o}rlund~\cite[Eq.\,(26), p.\,138]{Norlund:1922} in 1922.
Later N{\o}rlund~\cite[pp.\,21--28]{Norlund:1924} discussed the symmetry
property around $x = \frac{1}{2}$ of the Bernoulli polynomials in more detail.

Comparing with the formulas~\eqref{eq:S9} of Faulhaber and Bernoulli for $S_9(x)$,
the corresponding polynomial in terms of the integral substitution
$\omega = (2x-1)^2$ reads
\vspace*{-3pt}
\[
  S_9(x) = \frac{1}{10 \cdot 2^{10}}
    (\omega^5 - 15 \omega^4 + 98 \omega^3
    - 310 \omega^2 + 381 \omega - 155)
\]
(cf. Glaisher~\cite{Glaisher:1899} and Joffe~\cite{Joffe:1915}, as mentioned
in the former section).

Proceeding with the Faulhaber polynomials, we only need the right-hand side
of~\eqref{eq:bern-half} below. Again, with $(x - \frac{1}{2})^2 = 2y + \frac{1}{4}$
for $y = x(x-1)/2$, one then obtains
\begin{alignat*}{3}
  \BN_n(x) &= \sum_{\substack{\nu=0\\2 \mids \nu}}^{n} \binom{n}{\nu}
    \BN_{\nu}\mleft( \tfrac{1}{2} \mright) \, \mleft( 2y + \tfrac{1}{4} \mright)^{\!(n-\nu)/2}
    &\quad &(n \geq 4 \text{\ even}),\\
  \frac{\BN_n(x)}{x - \frac{1}{2}} &= \sum_{\substack{\nu=0\\2 \mids \nu}}^{n} \binom{n}{\nu}
    \BN_{\nu}\mleft( \tfrac{1}{2} \mright) \, \mleft( 2y + \tfrac{1}{4} \mright)^{\!(n-1-\nu)/2}
    &\quad &(n \geq 3 \text{\ odd}).
\end{alignat*}

Finally, together with $n S_{n-1}(x) = \BN_n(x) - \BN_n$, this provides a similar
result as given by Theorem~\ref{thm:Schroeder}, but here by expanding the sums
in terms of~$y = \binom{x}{2}$.


\section{Main results}
\label{sec:main}

We write the Faulhaber polynomial $F_n$ with its coefficients
for $n \geq 2$ and the degree $\dd_n = \deg F_n = \floor{n/2} - 1$ as
\[
  F_n(y) = \sum_{k=0}^{\dd_n} \ff_{n,k} \, y^k
    = \sum_{k=0}^{\infty} \ff_{n,k} \, y^k,
\]
where $\ff_{n,k} = 0$ for $k > \dd_n$ to keep formulas simple.
Define the differential operators
\[
  \dop_t = \frac{d}{dt} \andq
    \theta_{t,\alpha} = 1 + \alpha \, t \, \dop_t, \quad
    \theta_t = \theta_{t,1} \quad (\alpha \geq 0).
\]
The operator $\theta$ (meaning $\theta_{t,\alpha}$ or $\theta_t$), which is
linear and commutative, preserves the degree of a polynomial~$f$ such that
$\deg \theta f = \deg f$ and keeps constant terms invariant.

\begin{theorem} \label{thm:FP-recurr-1}
The Faulhaber polynomials $F_n$ obey the following recurrences:
\begin{alignat*}{3}
  n F_{n-1}(y) &= 3 \theta_{y, \frac{1}{2}} F_n(y) &\quad &(n \geq 3 \text{\ odd}), \\
  n y^2 F_{n-1}(y) &= \tfrac{1}{6} \theta_y ( F_n(y) - F_n(0) )
    + 2y \, \theta_{y,\frac{2}{3}} F_n(y) &\quad &(n \geq 4 \text{\ even}).
\end{alignat*}
In particular, we have
\[
  F_n(0) = \ff_{n,0} = \begin{cases}
    6 \BN_n, & \text{if $n \geq 2$ is even}, \\
    2n \BN_{n-1}, & \text{if $n \geq 3$ is odd},
  \end{cases}
\]
where $(-1)^{\dd_n} F_n(0) > 0$; while
$F_n(1) = \ff_{n,0} + \dotsb + \ff_{n,\dd_n} = 1$ for $n \geq 2$.
\end{theorem}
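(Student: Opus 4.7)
The plan is to differentiate the defining identity $S_n(x) = f_n(x)\,F_n(S_1(x))$ and match it against the derivative of the power-sum formula. Using \eqref{eq:sum-poly} together with the standard fact $\BN_{n+1}'(x)=(n+1)\BN_n(x)$, one gets $S_n'(x) = \BN_n(x) = n\,S_{n-1}(x) + \BN_n$. I would first record two elementary identities derived from $S_1(x)=x(x-1)/2$ and $S_2(x)=x(x-1)(2x-1)/6$, namely
\[
  S_2(x) = \tfrac{2}{3}\, S_1(x)\, S_1'(x) \andq S_1'(x)^2 = 2\, S_1(x) + \tfrac{1}{4}.
\]

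For odd $n\geq 3$ the constant $\BN_n$ vanishes. Differentiating $S_n = S_1^2\,F_n(S_1)$ and substituting $S_{n-1} = S_2\,F_{n-1}(S_1) = \tfrac{2}{3} S_1 S_1'\, F_{n-1}(S_1)$ gives, after cancellation of the common factor $S_1 S_1'$ (which is non-zero as a polynomial) and substitution $y=S_1(x)$,
\[
  2F_n(y) + y F_n'(y) = \tfrac{2n}{3}\, F_{n-1}(y),
\]
which rearranges to $n F_{n-1}(y) = 3\,\theta_{y,\frac{1}{2}} F_n(y)$ as claimed. Setting $y=0$ yields the relation $F_n(0) = \tfrac{n}{3}F_{n-1}(0)$, which — granted the even-$n$ value $F_{n-1}(0)=6\BN_{n-1}$ — gives $F_n(0)=2n\BN_{n-1}$ for odd $n$.

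For even $n\geq 4$ I would differentiate $S_n=S_2\,F_n(S_1)$, using $S_2'=\tfrac{2}{3}(S_1'^2+S_1) = \tfrac{2}{3}(3S_1+\tfrac{1}{4})$ and $S_2 S_1' = \tfrac{2}{3} S_1 (2S_1+\tfrac{1}{4})$, and compare with $n S_1^2 F_{n-1}(S_1) + \BN_n$. A routine regrouping (with $y=S_1$) yields
\[
  n y^2 F_{n-1}(y) + \BN_n = \tfrac{1}{6} F_n(y) + \tfrac{1}{6} y F_n'(y) + 2 y F_n(y) + \tfrac{4}{3} y^2 F_n'(y).
\]
To absorb the constant $\BN_n$ into the $\theta_y(F_n(y)-F_n(0))$ term, one needs $F_n(0)=6\BN_n$. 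I would establish this independently by comparing the lowest-order expansions around $x=0$: from \eqref{eq:sum-poly} and \eqref{eq:bern-poly} one reads off $S_n(x)=\BN_n x + O(x^2)$, while $S_2(x)\,F_n(S_1(x)) = \tfrac{x}{6} F_n(0) + O(x^2)$ since $S_2(x)=\tfrac{x}{6}+O(x^2)$ and $S_1(0)=0$. Equality forces $F_n(0)=6\BN_n$ and recasts the identity as the stated recurrence involving $\theta_y$ and $\theta_{y,\frac{2}{3}}$.

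The remaining normalization $F_n(1)=1$ follows from the observation that $S_1(2)=S_2(2)=1$, so that evaluating at $x=2$ gives $S_n(2)=0^n+1^n=1=F_n(1)$ in both parities. The sign statement $(-1)^{\dd_n}F_n(0)>0$ follows from $\dd_n=\lfloor n/2\rfloor -1$ and the alternating sign of the Bernoulli numbers, $(-1)^{m-1}\BN_{2m}>0$ for $m\geq 1$, applied to $F_n(0)=6\BN_n$ (even case) and to $F_n(0)=2n\BN_{n-1}$ (odd case). The main obstacle I expect is the careful algebraic reorganization in the even case — recognizing the $\theta_{y,2/3}$ structure and using the independent computation of $F_n(0)$ to absorb the stray $\BN_n$ — whereas the odd case, the value at $1$, and the sign statement are essentially immediate once $F_n(0)=6\BN_n$ is in hand.
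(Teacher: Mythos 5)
Your proposal is correct and follows essentially the same route as the paper's proof: differentiate Faulhaber's identity $S_n = f_n\,F_n(S_1)$, use the elementary identities for $S_1$, $S_2$ (the paper's Lemma~\ref{lem:S12-ident}), split into parity cases, pin down $F_n(0)$ from the behavior of $S_n(x)/S_2(x)$ near $x=0$, and get $F_n(1)=1$ from evaluation at $x=2$. The only cosmetic difference is that you extract $F_n(0)=6\BN_n$ by matching lowest-order Taylor coefficients at $x=0$, where the paper invokes L'H\^opital's rule on $\lim_{x\to 0} S_n(x)/S_2(x)$ --- the same computation in different clothing.
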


Note that $S_n(1) = 0$ and $F_n(1) = 1$, so the sum of the coefficients of
$S_n$ and $F_n$ equals $0$ and $1$, respectively. Applying the operator
$\theta_{y, \frac{1}{2}}$ to $F_n$ immediately gives the following relation,
which also follows from~\eqref{eq:S-odd-even} and~\eqref{eq:S2-sum}.
Note that $\dd_n = \dd_{n-1}$ for odd $n \geq 3$.

\begin{corollary} \label{cor:FP-odd-even}
Let $n \geq 3$ be odd. Then we have a mapping of the coefficients between
$F_n$~and $F_{n-1}$ and vice versa, as follows:
\begin{align*}
  \ff_{n-1,k} &= \frac{3}{2n} ( k+2 ) \, \ff_{n,k}
    \quad (0 \leq k \leq \dd_n), \\
\shortintertext{respectively,}
  F_{n-1}(y) &= \frac{3}{2n} \sum_{k=0}^{\dd_n} ( k+2 ) \, \ff_{n,k} \, y^k.
\end{align*}
\end{corollary}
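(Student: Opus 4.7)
The plan is short because the corollary is essentially a one-line extraction of coefficients from the first recurrence in Theorem~\ref{thm:FP-recurr-1}. First I would verify the degree consistency: for odd $n \geq 3$, both $\dd_n = \floor{n/2} - 1$ and $\dd_{n-1} = \floor{(n-1)/2} - 1$ equal $(n-3)/2$, so the index range $0 \leq k \leq \dd_n$ exhausts all coefficients of both $F_n$ and $F_{n-1}$, and the displayed formula for $F_{n-1}(y)$ has exactly the right degree.

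Next, I would compute the action of the operator $\theta_{y,1/2} = 1 + \tfrac{1}{2} y \dop_y$ on a monomial, namely $\theta_{y,1/2}(y^k) = (1 + k/2)\, y^k = \tfrac{k+2}{2}\, y^k$. Applying linearity to $F_n(y) = \sum_{k=0}^{\dd_n} \ff_{n,k}\, y^k$ gives
\[
  \theta_{y,1/2} F_n(y) = \sum_{k=0}^{\dd_n} \frac{k+2}{2}\, \ff_{n,k}\, y^k.
\]
Substituting into the first recurrence $n F_{n-1}(y) = 3\, \theta_{y,1/2} F_n(y)$ of Theorem~\ref{thm:FP-recurr-1} and dividing by $n$ yields exactly the polynomial identity for $F_{n-1}(y)$ claimed in the corollary; comparing coefficients of $y^k$ delivers $\ff_{n-1,k} = \tfrac{3}{2n}(k+2)\, \ff_{n,k}$.

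As an independent cross-check, and as hinted in the text, I would rederive the same relation directly from \eqref{eq:S-odd-even} and~\eqref{eq:S2-sum}. For odd $n$, the defining relation $S_n(x) = S_1(x)^2 F_n(S_1(x))$ together with the first line of \eqref{eq:S-odd-even} forces $a_{k+2} = \ff_{n,k}$ for $0 \le k \le \dd_n$. Plugging these $a_\nu$ into the second line of~\eqref{eq:S-odd-even}, and comparing with $S_{n-1}(x) = S_2(x) F_{n-1}(S_1(x)) = \tfrac{1}{3} S_1(x)(2x-1) F_{n-1}(S_1(x))$ via~\eqref{eq:S2-sum}, the common factor $S_1(x)(2x-1)$ cancels in $\QQ[x]$ and reproduces the same coefficient identity.

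No genuine obstacle arises: both routes are routine unpackings. The only minor point worth noting in the alternative derivation is that the cancellation of $S_1(x)(2x-1)$ is a valid polynomial operation, since both sides are polynomials in $x$ and $S_1(x)(2x-1)$ is a nonzero element of $\QQ[x]$.
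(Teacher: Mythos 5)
Your proposal is correct and matches the paper's own treatment: the paper likewise obtains the corollary by applying $\theta_{y,\frac{1}{2}}$ termwise to $F_n$ (so $\theta_{y,\frac{1}{2}}\,y^k = \tfrac{k+2}{2}\,y^k$) in the recurrence $n F_{n-1}(y) = 3\,\theta_{y,\frac{1}{2}} F_n(y)$ of Theorem~\ref{thm:FP-recurr-1}, noting $\dd_n = \dd_{n-1}$ for odd $n$, and it also remarks on the same alternative derivation from \eqref{eq:S-odd-even} and \eqref{eq:S2-sum} that you include as a cross-check.
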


Therefore, it is sufficient to consider the Faulhaber polynomials $F_n$
for odd $n \geq 3$ below.

\begin{theorem} \label{thm:FP-recurr-2}
Let $n \geq 5$ be odd. Then we have the recurrence
\[
  n(n-1) \, y^2 F_{n-2}(y)
    = \tfrac{1}{2} \theta_{y,\frac{1}{2}} \theta_y ( F_n(y) - F_n(0) )
    + 6y \, \theta_{y,\frac{1}{2}} \theta_{y,\frac{2}{3}} F_n(y).
\]
Equivalently, we have for $k \geq 0$ that
\begin{equation} \label{eq:coeff-recurr-2}
  \binom{n}{2} \, \ff_{n-2,k} = \frac{1}{2} \binom{2(k+3)}{2} \, \ff_{n,k+1}
    + \frac{1}{4} \binom{k+4}{2} \, \ff_{n,k+2}.
\end{equation}
In particular, the leading coefficient has for odd $n \geq 3$ the form
\[
  \ff_{n,\dd_n} = \frac{2^{(n+1)/2}}{n+1}.
\]
\end{theorem}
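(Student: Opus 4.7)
The plan is to derive the operator recurrence by chaining the two identities of Theorem~\ref{thm:FP-recurr-1}: since $n$ is odd and $\geq 5$, the index $n-1$ is even and $n-2$ is odd. First I would apply the even-index case of Theorem~\ref{thm:FP-recurr-1} with $n$ replaced by $n-1$,
\[
  (n-1) y^2 F_{n-2}(y) = \tfrac{1}{6} \theta_y ( F_{n-1}(y) - F_{n-1}(0) ) + 2y \, \theta_{y,\frac{2}{3}} F_{n-1}(y),
\]
and then multiply by $n$ and use the odd-index case $n F_{n-1}(y) = 3 \theta_{y,\frac{1}{2}} F_n(y)$ to eliminate $F_{n-1}$. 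Two auxiliary observations are needed: first, $\theta_{y,\alpha}$ keeps constant terms invariant, so $n F_{n-1}(0) = 3 F_n(0)$; second, the operators $\theta_{y,\alpha}$ all commute, being polynomials in $y \dop_y$. After substitution the numerical factors $\frac{1}{6}\cdot 3 = \frac{1}{2}$ and $2\cdot 3 = 6$ appear, and rearranging operator order gives exactly the stated identity for $n(n-1) y^2 F_{n-2}(y)$.

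The coefficient form~\eqref{eq:coeff-recurr-2} then follows by extracting the coefficient of $y^{k+2}$ on both sides. Since $\theta_{y,\alpha} y^j = (1+\alpha j) y^j$, the operator $\theta_{y,\frac{1}{2}} \theta_y$ multiplies $\ff_{n,j} y^j$ by $(1+j)(1+j/2)$, while $\theta_{y,\frac{1}{2}} \theta_{y,\frac{2}{3}}$ multiplies by $(1+j/2)(1+2j/3)$. Taking $j = k+2$ in the first term (whose $y^0$ contribution was subtracted out via $F_n(0)$) and $j = k+1$ in the second, and noting the identities $(k+3)(k+4)/4 = \binom{k+4}{2}/2$ and $(k+3)(2k+5) = \binom{2(k+3)}{2}$, one recovers the claimed formula after dividing through by~$2$.

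For the leading coefficient, I would specialise~\eqref{eq:coeff-recurr-2} to $k = \dd_{n-2} = (n-5)/2$. Since $k+2 = (n-1)/2 > \dd_n$, the term $\ff_{n,k+2}$ vanishes, and with $k+1 = \dd_n$ and $2(k+3) = n+1$ the recurrence collapses to
\[
  \binom{n}{2} \ff_{n-2,\dd_{n-2}} = \tfrac{1}{2} \binom{n+1}{2} \ff_{n,\dd_n},
\]
equivalently $\ff_{n,\dd_n} = \tfrac{2(n-1)}{n+1} \ff_{n-2,\dd_{n-2}}$. Starting from $F_3 \equiv 1$, i.e.\ $\ff_{3,0} = 1 = 2^2/4$, a one-line induction on odd $n \geq 3$ yields $\ff_{n,\dd_n} = 2^{(n+1)/2}/(n+1)$. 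The only delicate point in the whole argument is the bookkeeping of constants in the first paragraph, specifically verifying that $\theta_y(\theta_{y,\frac{1}{2}} F_n - F_n(0)) = \theta_{y,\frac{1}{2}} \theta_y (F_n - F_n(0))$ so that the subtraction of $F_n(0)$ propagates cleanly through the commuting operators; everything else is direct computation.
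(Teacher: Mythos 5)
Your proposal is correct and follows essentially the same route as the paper: chain the even-index and odd-index recurrences of Theorem~\ref{thm:FP-recurr-1} (using $nF_{n-1}(0)=3F_n(0)$ and commutativity of the $\theta$ operators), extract the coefficient of $y^{k+2}$ to get \eqref{eq:coeff-recurr-2}, and specialise to $k=\dd_n-1$ to obtain $\ff_{n,\dd_n} = \tfrac{2(n-1)}{n+1}\,\ff_{n-2,\dd_{n-2}}$, whence the leading coefficient by induction from $\ff_{3,0}=1$. All numerical factors check out, including the commutation identity you flag as the delicate point, so nothing is missing.
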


Edwards~\cite{Edwards:1986} showed that the coefficients of the Faulhaber
polynomials can be calculated by inverting a certain lower triangular matrix.
However, the following recurrence between the coefficients $\ff_{n,k}$ can
be easily deduced directly, see Section~\ref{sec:proof-2}.

\begin{theorem} \label{thm:coeff-recurr}
Let $n \geq 3$ be odd. For $0 \leq \ell \leq \dd_n$, we have
\begin{equation} \label{eq:coeff-recurr}
  \binom{n}{\ell+1} \BN_{n-(\ell+1)}
    = (\ell+2) \sum_{k=0}^{\ell} \binom{k+2}{\ell-k}
    \mleft(\frac{1}{2} \mright)^{\!k+2} \ff_{n,k}.
\end{equation}
In particular, for $\ell \geq 3$, the sum runs over
$\floor{(\ell-1)/2} \leq k \leq \ell$ due to vanishing binomial coefficients.
\end{theorem}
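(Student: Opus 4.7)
The plan is to extract the coefficient of $x^{\ell+2}$ on both sides of the defining identity
\[
  S_n(x) \;=\; \sum_{k=0}^{\dd_n} \ff_{n,k}\, S_1(x)^{k+2},
\]
which, for odd $n\geq 3$, is exactly Theorem~\ref{thm:Faulhaber-Jacobi} with $f_n=S_1^{2}$. Since both sides are polynomials in $x$, matching the coefficient at a single power $x^{\ell+2}$ gives one linear relation tying a single Bernoulli number to the coefficients $\ff_{n,k}$ with $k\leq\ell$, which is precisely what~\eqref{eq:coeff-recurr} asserts.

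For the power-sum side I use~\eqref{eq:sum-poly} and~\eqref{eq:bern-poly}: since $(n+1)S_n(x)=\BN_{n+1}(x)-\BN_{n+1}$, the coefficient
\[
  [x^{\ell+2}]\,S_n(x) \;=\; \frac{1}{n+1}\binom{n+1}{\ell+2}\BN_{n-\ell-1}
    \;=\; \frac{1}{\ell+2}\binom{n}{\ell+1}\BN_{n-\ell-1}
\]
by the elementary absorption $\tfrac{1}{n+1}\binom{n+1}{\ell+2}=\tfrac{1}{\ell+2}\binom{n}{\ell+1}$. For the Faulhaber side I write $S_1(x)^{k+2}=2^{-(k+2)}x^{k+2}(x-1)^{k+2}$ and apply the binomial theorem; shifting the index by $x^{k+2}$ gives
\[
  [x^{\ell+2}]\,S_1(x)^{k+2} \;=\; \frac{(-1)^{\ell}}{2^{k+2}}\binom{k+2}{\ell-k},
\]
since the sign $(-1)^{2k+2-\ell}$ collapses to $(-1)^{\ell}$, and this binomial vanishes unless $(\ell-2)/2\leq k\leq\ell$. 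Combining the two evaluations and multiplying by $\ell+2$ produces the signed identity
\[
  \binom{n}{\ell+1}\BN_{n-\ell-1} \;=\; (-1)^{\ell}(\ell+2)\sum_{k=0}^{\ell}\binom{k+2}{\ell-k}\Bigl(\tfrac{1}{2}\Bigr)^{k+2}\ff_{n,k}.
\]

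The only step that requires thought, and what I would flag as the main obstacle, is the removal of the factor $(-1)^{\ell}$ that appears in my derivation but is absent in the stated formula~\eqref{eq:coeff-recurr}. For even $\ell$ there is nothing to check. For odd $\ell$ in the admissible range $1\leq\ell\leq\dd_n=(n-3)/2$, the index $n-\ell-1$ is odd and satisfies $n-\ell-1\geq(n+1)/2\geq 3$, so $\BN_{n-\ell-1}=0$; the signed identity above then forces the right-hand sum to vanish as well, and dropping $(-1)^{\ell}$ preserves equality in these two exceptional families. The closing remark about the effective range $\floor{(\ell-1)/2}\leq k\leq\ell$ for $\ell\geq 3$ is then immediate from $\binom{k+2}{\ell-k}=0$ whenever $k<(\ell-2)/2$, using $\lceil(\ell-2)/2\rceil=\floor{(\ell-1)/2}$ for every integer $\ell$.
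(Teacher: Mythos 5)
Your proof is correct, including the two points that need care: truncating the Faulhaber sum at $k=\ell$ (legitimate since $\binom{k+2}{\ell-k}=0$ for $k>\ell$ and $\ell\leq\dd_n$), and the removal of the sign $(-1)^\ell$, which you justify exactly as one must — for odd $\ell$ one has $n\geq 5$, the index $n-\ell-1$ is odd and at least $(n+1)/2\geq 3$, so $\BN_{n-\ell-1}=0$ and both sides vanish.

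Your route differs from the paper's in packaging rather than in substance, and the difference is worth noting. The paper also compares the two sides of $S_n(x)=\FH_n\bigl(\binom{x}{2}\bigr)$ at the single order $\ell+2$ around $x=0$, but it does so in derivative language: it invokes Hoppe's formula (Theorem~\ref{thm:Hoppe}) for the $(\ell+2)$-th derivative of the composite, and then Lemma~\ref{lem:deriv-power} to evaluate $\dop_x^{\ell+2}\,g(x)^{k+2}$ at $x=0$ (here the hypothesis $g(0)=0$ makes Hoppe's $\psi_{\ell+2,k+2}$ collapse to that single derivative). You instead extract the Taylor coefficient $\coeff{x^{\ell+2}}$ directly with the binomial theorem; since $\coeff{x^{\ell+2}}$ is just $\dop_x^{\ell+2}(\cdot)\valueat{x=0}$ divided by $(\ell+2)!$, your term-by-term computation is literally the same as the paper's (your binomial evaluation of $\coeff{x^{\ell+2}}S_1(x)^{k+2}$ is the content of Lemma~\ref{lem:deriv-power}), but it needs no composite-function machinery at all. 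What the paper's formulation buys is the thematic point of its Section~\ref{sec:proof-2}: exhibiting Hoppe's formula and its link to the Lah numbers, which the paper reuses (via Lemma~\ref{lem:deriv-recip}) in the later study of reciprocal Bernoulli polynomials. What your formulation buys is economy and self-containedness — only \eqref{eq:sum-poly}, the binomial theorem, and the vanishing of odd-index Bernoulli numbers are used.
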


Formula~\eqref{eq:coeff-recurr} is the complement of the following
formula~\eqref{eq:coeff-calc} of Gessel and Viennot~\cite{Gessel&Viennot:1989}
that gives the coefficients $\ff_{n,k}$ explicitly. By both formulas, one can
compute the expressions of the first few coefficients $\ff_{n,k}$ for small $k$,
see Tables~\ref{tbl:F-coeff-odd} and~\ref{tbl:F-coeff-even}.

\begin{theorem}[Gessel and Viennot \cite{Gessel&Viennot:1989} (1989)]
\label{thm:Gessel-Viennot}
Let $n \geq 3$ be odd. For $0 \leq k \leq \dd_n$, we have
\begin{equation} \label{eq:coeff-calc}
  \ff_{n,k} = (-1)^k \frac{2^{k+2}}{k+2}
    \sum_{\nu=0}^{k+1} \binom{2(k+1)-\nu}{k+1} \binom{n}{\nu} \BN_{n-\nu}.
\end{equation}
Moreover, the coefficients $\ff_{n,k} \neq 0$ alternate in sign such that
$(-1)^{\dd_n - k} \, \ff_{n,k} > 0$.
\end{theorem}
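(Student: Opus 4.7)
The plan is to apply Lagrange inversion to the quadratic substitution $y = f(x) = x(x-1)/2 = S_1(x)$, extract the Taylor coefficients of $\BN_{n+1}(x)$ in $y$, and invoke Theorem~\ref{thm:Schroeder} for the sign statement. First I would identify $\ff_{n,k}$ as a coefficient in $y$: for odd $n \geq 3$, combining Theorem~\ref{thm:Faulhaber-Jacobi} with~\eqref{eq:sum-poly} gives
\[
(n+1)\,y^2 F_n(y) = \BN_{n+1}(x) - \BN_{n+1}.
\]
Since $n+1$ is even, the reflection relation $\BN_{n+1}(1-x) = \BN_{n+1}(x)$ together with the obvious $x \mapsto 1-x$ invariance of $y$ ensures that $\BN_{n+1}(x)$ is genuinely a polynomial in $y$, so $\ff_{n,k}$ equals the coefficient of $y^{k+2}$ in $\BN_{n+1}(x)/(n+1)$ for every $k \geq 0$.

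Next I would apply Lagrange inversion. Since $f(0) = 0$ and $f'(0) = -\tfrac{1}{2} \neq 0$, $f$ has a compositional inverse $g$ as a formal power series, and
\[
[y^{k+2}]\,\BN_{n+1}(g(y)) = \frac{1}{k+2}\,[x^{k+1}]\,\BN_{n+1}'(x)\left(\frac{x}{f(x)}\right)^{\!k+2}.
\]
Using $\BN_{n+1}'(x) = (n+1)\BN_n(x)$ and $x/f(x) = 2/(x-1)$, the second factor simplifies to $(-1)^k 2^{k+2}(1-x)^{-(k+2)}$. A Cauchy-product expansion using~\eqref{eq:bern-poly} and the binomial series $(1-x)^{-(k+2)} = \sum_{\mu \geq 0}\binom{k+1+\mu}{k+1}x^\mu$ then yields
\[
[x^{k+1}]\,\BN_n(x)\,(1-x)^{-(k+2)} = \sum_{\nu=0}^{k+1}\binom{2(k+1)-\nu}{k+1}\binom{n}{\nu}\BN_{n-\nu},
\]
and dividing by $n+1$ recovers formula~\eqref{eq:coeff-calc}.

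For the non-vanishing and alternating sign assertion, I would match powers of $u = 2y$ in Jacobi's expansion~\eqref{eq:Jacobi-poly} against $y^2 F_n(y)$ to obtain $\ff_{n,k} = \frac{2^{k+2}}{n+1}A_{\dd_n - k}^{(m)}$ with $m = (n+1)/2$; combined with~\eqref{eq:coeff-A-b} and the positivity of $\beta_j^{(m-1)}$ for $0 \leq j \leq \dd_n$ from Theorem~\ref{thm:Schroeder}, this gives $(-1)^{\dd_n - k}\,\ff_{n,k} > 0$. The main obstacle is justifying the passage from the formal Lagrange inversion output to the actual coefficient of the polynomial $\BN_{n+1}(x)$ viewed as a polynomial in $y$. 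This relies on the reflection symmetry of the first paragraph: the two formal branches of $g$ solve $y = x(x-1)/2$ and are exchanged by $x \mapsto 1-x$, under which $\BN_{n+1}$ is invariant, so $\BN_{n+1}(g(y))$ is a well-defined polynomial in $y$ independent of the branch choice, and the Lagrange-inversion coefficient is the correct one.
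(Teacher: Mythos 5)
Your proof is correct, but note that the paper itself never proves Theorem~\ref{thm:Gessel-Viennot}: formula~\eqref{eq:coeff-calc} is cited from Gessel and Viennot's preprint (where it comes from a combinatorial determinant/lattice-path argument), and the paper instead re-derives equivalent statements independently via Hoppe's formula and the reciprocal Bernoulli polynomials $\BC_{n,2k}(x)$ --- see Theorem~\ref{thm:coeff-deriv} and Corollary~\ref{cor:coeff-deriv}, which for odd $n$ collapse to exactly~\eqref{eq:coeff-calc} upon inserting $(-1)^n=-1$, and whose sign property is proved there by a reverse induction on the recurrence~\eqref{eq:recurr-f}, independent of Schr\"oder's theorem. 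Your Lagrange--B\"urmann computation is therefore a genuinely different (third) route to the formula, and an efficient one: starting from $(n+1)\,y^2F_n(y)=\BN_{n+1}(x)-\BN_{n+1}$ (Theorem~\ref{thm:Faulhaber-Jacobi} plus~\eqref{eq:sum-poly}), the inversion formula with $\phi(x)=x/f(x)=2/(x-1)$ and the Cauchy product against~\eqref{eq:bern-poly} give~\eqref{eq:coeff-calc} in a few lines; moreover your derivation is valid for all $k\geq 0$, so the vanishing of the right-hand side for $k>\dd_n$ (Corollary~\ref{cor:bern-recurr}) falls out for free. Your sign argument, by contrast, is precisely the route the paper sketches but does not execute --- it remarks after its Knuth theorem that relations~\eqref{eq:coeff-A-b} and~\eqref{eq:subst} together with Schr\"oder's positivity force the $\ff_{n,k}$ to be nonzero and alternating --- so that half of your proof matches the paper's intended shortcut, at the cost of depending on Theorem~\ref{thm:Schroeder}, whereas the paper's Section~\pref{sec:proof-3} treatment avoids it. One simplification: your branch-symmetry discussion in the final paragraph is unnecessary. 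Theorem~\ref{thm:Faulhaber-Jacobi} already supplies a polynomial $P(y)=(n+1)\,y^2F_n(y)$ with $P(f(x))=\BN_{n+1}(x)-\BN_{n+1}$ identically in $x$; composing this identity on the right with the formal compositional inverse $g$ gives $P(y)=\BN_{n+1}(g(y))-\BN_{n+1}$ as formal power series in $y$, which is all that is needed to identify the Lagrange-inversion coefficient with $(n+1)\ff_{n,k}$.
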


Actually, formula~\eqref{eq:coeff-calc} above is
\cite[Eq.\,(12.10), p.\,33]{Gessel&Viennot:1989} after a suitable modification
of terms and notation, notably including the vanishing terms for even indices.

Observing the terms $\binom{n}{\nu} \BN_{n-\nu}$ in~\eqref{eq:coeff-calc},
which also occur in the Bernoulli polynomials in \eqref{eq:bern-poly},
one may ask whether there is a \textsl{hidden} connection.
Indeed, we can establish a relationship in a quite different manner
using \textdef{reciprocal} Bernoulli polynomials $x^n \, \BN_n(x^{-1})$,
but defined more generally. For this purpose, we introduce the related functions
\[
  \BC_{n,k}(x) = x^k \, \BN_n(x^{-1}) \quad (n \geq 0, \, k \in \ZZ),
\]
see Sections~\ref{sec:proof-3} and~\ref{sec:sym}
for further discussions. It becomes apparent
that derivatives of $\BC_{n,k}(x)$ at $x=1$ play a central role here.
We consider the related coefficients $\fh_{n,k}$, which are defined by
\begin{equation} \label{eq:coeff-subst}
  \ff_{n,k} = (-1)^{k+1} \frac{2^{k+2}}{(k+2)!} \, \fh_{n,k}.
\end{equation}
Under this substitution, we achieve the following extended results in a
different context.

\begin{theorem} \label{thm:coeff-deriv}
For $n \geq k \geq 0$, we have
\[
  \BC_{n,2k}^{(k)}(1) = (-1)^n k! \sum_{\nu=0}^{k}
    \binom{2k-\nu}{k} \binom{n}{\nu} \BN_{n-\nu}.
\]
The numbers $\bb_{n,k} = \BC_{n,2k}^{(k)}(1)$ obey, for $n \geq 2$
and $0 \leq k \leq n-2$, the recurrence
\[
  \bb_{n,k+2} = 2 (2k+3) \, \bb_{n,k+1} + n(n-1) \, \bb_{n-2,k}.
\]
For odd $n \geq 3$, we have the relations
\[
  \fh_{n,k} = \bb_{n,k+1} \quad (0 \leq k < n) \andq
    (-1)^{\dd_n+1} \, \fh_{n,k} > 0 \quad (0 \leq k \leq \dd_n).
\]
\end{theorem}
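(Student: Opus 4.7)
The plan is to prove the three assertions in sequence, in each case leveraging the reflection identity $\BN_n(1-y) = (-1)^n \BN_n(y)$ together with Leibniz's rule to turn the claim into an elementary algebraic identity.

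For the closed-form expression of $\bb_{n,k}$, I would apply the reflection with $y = 1/x$ to rewrite
\[
  \BC_{n,2k}(x) = (-1)^n x^{2k} \BN_n\!\mleft( \tfrac{x-1}{x} \mright),
\]
and then expand $\BN_n((x-1)/x)$ by~\eqref{eq:bern-poly} with $s = (x-1)/x$, which telescopes after multiplying by $x^{2k}$ to
\[
  \BC_{n,2k}(x) = (-1)^n \sum_{j=0}^{n} \binom{n}{j} \BN_{n-j} (x-1)^j \, x^{2k-j}.
\]
Now I take the $k$th derivative at $x=1$ of each summand via Leibniz. Since $(x-1)^j$ has a zero of exact order $j$ at $x=1$, the only surviving contribution is the one that applies all $j$ derivatives to $(x-1)^j$ (giving $j!$) and the remaining $k-j$ derivatives to $x^{2k-j}$ (giving $(2k-j)!/k!$ at $x=1$); this is valid only for $0 \leq j \leq k$. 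The combinatorial constants collapse to $k! \binom{2k-j}{k}$, delivering the claimed formula.

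For the recurrence, I would substitute the closed form into each of the three terms and reindex the sum for $\bb_{n-2,k}$ via $\mu = \nu + 2$, using the identity $n(n-1)\binom{n-2}{\mu-2} = \mu(\mu-1)\binom{n}{\mu}$. The common sign $(-1)^n$ cancels, and all three sums then range over $\mu \in \set{0,\ldots,k+2}$ with weight $\binom{n}{\mu}\BN_{n-\mu}$. Matching the $\mu$th coefficient reduces the recurrence to the polynomial identity
\[
  (a+1)(a+2) = 2(2k+3)(a-k) + (2k+2-a)(2k+1-a), \quad a := 2k+2-\mu,
\]
which holds because both sides are monic quadratics in $a$ and agree at three points, e.g., $a = 0, -1, k$.

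For the identification $\fh_{n,k} = \bb_{n,k+1}$, I would insert the substitution~\eqref{eq:coeff-subst} into the Gessel--Viennot formula~\eqref{eq:coeff-calc}, which yields
\[
  \fh_{n,k} = -(k+1)! \sum_{\nu=0}^{k+1} \binom{2(k+1)-\nu}{k+1} \binom{n}{\nu} \BN_{n-\nu}.
\]
This coincides with the expression for $\bb_{n,k+1}$ from the first step precisely when $(-1)^n = -1$, i.e., for odd $n$; hence $\fh_{n,k} = \bb_{n,k+1}$ for $0 \leq k \leq \dd_n$, while both sides vanish for $\dd_n < k < n$ by convention. The sign inequality $(-1)^{\dd_n+1}\fh_{n,k} > 0$ then follows at once from the corresponding sign claim in Theorem~\ref{thm:Gessel-Viennot} and the sign in~\eqref{eq:coeff-subst}. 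The principal subtlety is in the first step: the factor $(-1)^n$ and the truncation of the sum to $\nu \leq k$ are both invisible from the direct expansion $\BC_{n,2k}^{(k)}(1) = \sum_{i=0}^n \binom{n}{i} \BN_{n-i}(2k-i)^{\underline{k}}$, where the falling factorial $(2k-i)^{\underline{k}}$ does \emph{not} vanish for $i > 2k$, producing a cumbersome second range of nonzero terms; both features arise together only after the reflection rewriting consolidates the tail.
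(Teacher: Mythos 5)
Your first two steps are sound. The reflection-plus-Leibniz computation of $\BC_{n,2k}^{(k)}(1)$ is correct, and it is actually a more direct route than the paper's, which first derives the general derivative formula for $\BC_{n,k}^{(\ell)}(x)$ (Theorem~\ref{thm:recip-deriv}) via Hoppe's formula, the Lah numbers, and the Chu--Vandermonde identity, and only then specializes at $x=1$; your remark that the tail $\nu > 2k$ of the naive expansion is consolidated by the reflection is exactly the right subtlety to flag. Your verification of the recurrence by matching the coefficient of $\binom{n}{\mu}\BN_{n-\mu}$ and checking a quadratic identity in $a$ is essentially the paper's own argument (Lemma~\ref{lem:PF-recurr} together with the remark that the recurrence is linear in each coefficient, so $x^{n-\nu}$ may be replaced by $\BN_{n-\nu}$).

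The gap is in the third step. The theorem asserts $\fh_{n,k} = \bb_{n,k+1}$ on the full range $0 \leq k < n$, but your Gessel--Viennot argument only covers $0 \leq k \leq \dd_n$, since \eqref{eq:coeff-calc} is stated only there. For $\dd_n < k < n$ the left-hand side $\fh_{n,k}$ is zero by convention, but the right-hand side $\bb_{n,k+1} = \BC_{n,2(k+1)}^{(k+1)}(1)$ is a specific sum of Bernoulli numbers whose vanishing is \emph{not} a convention: it is precisely the content of Corollary~\ref{cor:bern-recurr}, which the paper deduces from this very part of the theorem (and later re-explains via the anti-palindromic symmetry of $\BF_{n,2k}(x)$ in Theorem~\ref{thm:BF-poly}). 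So the phrase ``both sides vanish \ldots by convention'' assumes the nontrivial half of the claim. The paper avoids this (and avoids any reliance on Gessel--Viennot, keeping the proof self-contained): it checks the initial values $\fh_{n,0} = \bb_{n,1}$, $\fh_{n,1} = \bb_{n,2}$, $\fh_{3,0} = \bb_{3,1}$ directly from Theorem~\ref{thm:coeff-recurr}, observes that \eqref{eq:coeff-recurr-2} rewritten via \eqref{eq:coeff-subst} is the same recurrence as that for $\bb$ shifted by $k \mapsto k+1$, and then runs an induction over odd $n$ and over $k$ that propagates the equality across all of $0 \leq k < n$ --- thereby \emph{proving} the vanishing of $\bb_{n,k+1}$ beyond $\dd_n$ rather than assuming it. Your argument can be repaired in the same spirit: having proved the recurrence in your second step, and knowing that $\fh$ satisfies the shifted recurrence for all $k \geq 0$ (Theorem~\ref{thm:FP-recurr-2} with \eqref{eq:coeff-subst}), extend your identification upward by induction; as written, though, the claim for $\dd_n < k < n$ is unsupported. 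A smaller point: your positivity claim inherits Gessel--Viennot's sign result, which the paper deliberately does not use; it obtains the signs from a downward induction on its own recurrence.
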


See Table~\ref{tbl:bb-coeff} for the first few values of $\bb_{n,k}$ and their
different patterns for odd and even $n$. Since $\fh_{n,k} = 0$ for $k > \dd_n$
and $\bb_{1,1} = 0$, we obtain the following recurrences of the Bernoulli numbers
in a quite simple form. One may compare this result with the lengthy formula
given by Gessel and Viennot \cite[Eq.\,below\,(12.10), p.~33]{Gessel&Viennot:1989}.

\begin{corollary} \label{cor:bern-recurr}
Let $n \geq 1$ be odd. For $(n+1)/2 \leq k \leq n$, we have
\[
  \sum_{\nu=0}^{k} \binom{2k-\nu}{k} \binom{n}{\nu} \BN_{n-\nu} = 0.
\]
\end{corollary}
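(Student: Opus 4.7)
The plan is to read the corollary as a direct transcription of vanishing information from the Faulhaber side back to the Bernoulli side, using the dictionary provided by Theorem~\ref{thm:coeff-deriv}. First I would invoke the explicit formula
\[
  \bb_{n,k} = (-1)^n k! \sum_{\nu=0}^{k}
    \binom{2k-\nu}{k} \binom{n}{\nu} \BN_{n-\nu},
\]
so that the corollary is equivalent to the statement $\bb_{n,k} = 0$ for odd $n \geq 1$ and $(n+1)/2 \leq k \leq n$.

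Next, I would handle the main range via the degree of the Faulhaber polynomial. Since $\deg F_n = \dd_n = \lfloor n/2 \rfloor - 1 = (n-3)/2$ for odd $n \geq 3$, the relation \eqref{eq:coeff-subst} is a nonvanishing rescaling of $\fh_{n,k}$, which forces $\fh_{n,k} = 0$ whenever $k > (n-3)/2$. Theorem~\ref{thm:coeff-deriv} then identifies $\fh_{n,k} = \bb_{n,k+1}$ for $0 \leq k < n$, so $\bb_{n,k+1} = 0$ on the range $(n-3)/2 < k \leq n-1$. Reindexing $j = k+1$ gives exactly $\bb_{n,j} = 0$ for $(n+1)/2 \leq j \leq n$, which is the desired conclusion once the explicit formula for $\bb_{n,j}$ is substituted.

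The remaining case $n = 1$ is covered by the paragraph preceding the corollary, which records $\bb_{1,1} = 0$; equivalently, one observes directly that $2\BN_1 + \BN_0 = -1 + 1 = 0$, yielding the single instance $k = 1 = n$. Putting the two ranges together, the corollary follows in a single line once Theorem~\ref{thm:coeff-deriv} is in hand.

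I do not expect any obstacle here, because all of the real work has already been performed in Theorem~\ref{thm:coeff-deriv}: the nontrivial input is the identification of $\fh_{n,k}$ with $\bb_{n,k+1}$ and the closed form for $\bb_{n,k}$. The only bookkeeping step to be careful about is the index shift between $\dd_n$ and $(n+1)/2$, and the fact that \eqref{eq:coeff-subst} introduces only a nonzero factor, so the vanishing of $\ff_{n,k}$, $\fh_{n,k}$, and $\bb_{n,k+1}$ are mutually equivalent on the relevant range.
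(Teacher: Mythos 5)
Your proposal is correct and follows essentially the same route as the paper, which justifies the corollary in a single sentence by combining the vanishing $\fh_{n,k}=0$ for $k>\dd_n$ (via the nonzero rescaling \eqref{eq:coeff-subst}), the identification $\fh_{n,k}=\bb_{n,k+1}$ from Theorem~\ref{thm:coeff-deriv}, and the separate observation $\bb_{1,1}=0$ for the case $n=1$. Your index bookkeeping ($\dd_n=(n-3)/2$, the shift $j=k+1$ landing exactly on the range $(n+1)/2\leq j\leq n$) matches the paper's intent precisely.
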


It will turn out later that the vanishing of $\BC_{n,2k}^{(k)}(1)$, which induces
Corollary~\ref{cor:bern-recurr}, follows necessarily from a symmetry relation,
see Section~\ref{sec:sym}. Finally, along with Corollary~\ref{cor:FP-odd-even},
the coefficients $\ff_{n,k}$ can be represented as follows.

\begin{corollary} \label{cor:coeff-deriv}
Let $n \geq 3$ be odd. For $0 \leq k < n$, we have
\begin{align*}
  \ff_{n,k} &= (-1)^{k+1} \frac{2^{k+2}}{(k+2)!} \, \bb_{n,k+1}
\shortintertext{and}
  \ff_{n-1,k} &= (-1)^{k+1} \frac{3}{n} \frac{2^{k+1}}{(k+1)!} \, \bb_{n,k+1}.
\shortintertext{Equivalently, a relation between derivatives is given by}
  F_n^{(k)}(0) &= (-1)^{k+1} \frac{2^{k+2}}{(k+1)(k+2)} \,
    \BC_{n,2(k+1)}^{(k+1)}(1)
\shortintertext{and}
  F_{n-1}^{(k)}(0) &= (-1)^{k+1} \frac{3}{n} \frac{2^{k+1}}{k+1} \,
    \BC_{n,2(k+1)}^{(k+1)}(1).
\end{align*}
Furthermore, we have for $n \geq 2$ that $(-1)^{\dd_n - k} \, \ff_{n,k} > 0$
for $0 \leq k \leq \dd_n$.
\end{corollary}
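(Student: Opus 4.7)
The corollary follows by assembling three results already established: Theorem~\ref{thm:coeff-deriv}, the substitution identity~\eqref{eq:coeff-subst}, and Corollary~\ref{cor:FP-odd-even}. Starting from~\eqref{eq:coeff-subst}, which reads $\ff_{n,k} = (-1)^{k+1}\,\frac{2^{k+2}}{(k+2)!}\,\fh_{n,k}$, I substitute the identification $\fh_{n,k} = \bb_{n,k+1}$ valid for odd $n \geq 3$ and $0 \leq k < n$ from Theorem~\ref{thm:coeff-deriv}. This yields the first displayed formula for $\ff_{n,k}$ at once. Next, I apply Corollary~\ref{cor:FP-odd-even}, which gives $\ff_{n-1,k} = \frac{3(k+2)}{2n}\,\ff_{n,k}$ for odd $n$, and observe that the factor $k+2$ cancels with one factorial step, producing $\frac{3(k+2)}{2n} \cdot \frac{2^{k+2}}{(k+2)!} = \frac{3}{n} \cdot \frac{2^{k+1}}{(k+1)!}$. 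This gives the second formula.

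The derivative reformulations are a purely formal step. Since $F_n$ is a polynomial with $F_n(y) = \sum_k \ff_{n,k}\,y^k$, the standard Taylor-coefficient identity $F_n^{(k)}(0) = k!\,\ff_{n,k}$ applies, and by the definition introduced just before Theorem~\ref{thm:coeff-deriv}, $\bb_{n,k+1} = \BC_{n,2(k+1)}^{(k+1)}(1)$. Multiplying the first two formulas by $k!$ and using $k!/(k+2)! = 1/((k+1)(k+2))$ and $k!/(k+1)! = 1/(k+1)$ produces the two derivative identities verbatim.

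For the final positivity assertion $(-1)^{\dd_n - k}\,\ff_{n,k} > 0$, I split on the parity of $n$. For odd $n \geq 3$ Theorem~\ref{thm:coeff-deriv} provides $(-1)^{\dd_n + 1}\,\fh_{n,k} > 0$ for $0 \leq k \leq \dd_n$; combining with the sign $(-1)^{k+1}$ in~\eqref{eq:coeff-subst} yields $(-1)^{\dd_n - k}\,\ff_{n,k} > 0$ since $(-1)^{k+1}(-1)^{\dd_n + 1} = (-1)^{\dd_n + k}$. For even $n \geq 2$, I apply Corollary~\ref{cor:FP-odd-even} with $n+1$ in place of $n$, giving $\ff_{n,k} = \frac{3(k+2)}{2(n+1)}\,\ff_{n+1,k}$; since $\dd_{n+1} = n/2 - 1 = \dd_n$ and the prefactor is strictly positive, the sign of $\ff_{n,k}$ agrees with that of $\ff_{n+1,k}$, which was just established in the odd case. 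No real obstacle appears: the entire corollary is bookkeeping once Theorem~\ref{thm:coeff-deriv} and Corollary~\ref{cor:FP-odd-even} are in place, and the only point requiring attention is ensuring that the index range $0 \leq k \leq \dd_n$ lies inside the range $0 \leq k < n$ where Theorem~\ref{thm:coeff-deriv} applies, which is immediate from $\dd_n = \floor{n/2} - 1 < n$.
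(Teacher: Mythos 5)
Your proposal is correct and follows essentially the same route the paper intends: the corollary is stated there as an immediate assembly of Theorem~\ref{thm:coeff-deriv}, the substitution~\eqref{eq:coeff-subst}, and Corollary~\ref{cor:FP-odd-even}, which is exactly what you do, including the transfer of the sign property to even indices via Corollary~\ref{cor:FP-odd-even} applied with $n+1$ in place of $n$ and the observation $\dd_{n+1}=\dd_n$. Your arithmetic checks (cancellation of $k+2$, the Taylor identity $F_n^{(k)}(0)=k!\,\ff_{n,k}$, and the sign bookkeeping $(-1)^{k+1}(-1)^{\dd_n+1}=(-1)^{\dd_n-k}$) are all accurate, so nothing is missing.
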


See Example~\ref{exp:coeff} for different ways to compute $\ff_{n,k}$ for
parameters $n=13$ and $k=4$; see also Figure~\ref{fig:graph} for a graph of the
corresponding function $\BC_{13,10}^{(5)}(x)$ and the phenomenon that one of
its zeros is located very close to $x=1$. Roughly speaking, this considered zero
moves to $x=1$ regarding $\BC_{n,2k}^{(k)}(x)$ for increasing $k$.

The values of $\BC_{n,2k}^{(k)}(1)$ can be expressed also in an alternative way.
For this reason, we introduce the numbers
\[
  \BF_{n,k} = \sum_{\nu = k}^{n} \binom{n}{\nu} \binom{\nu}{k} \BN_\nu
    \quad (0 \leq k \leq n),
\]
which obey the reflection relation
\[
  \BF_{n,k} = (-1)^n \, \BF_{n,n-k}.
\]
The properties are discussed in Section~\ref{sec:sym}.
See Table~\ref{tbl:BF-coeff} for the first few values of $\BF_{n,k}$.

\begin{theorem} \label{thm:coeff-deriv-2}
For $n \geq k \geq 0$, we have
\[
  \BC_{n,2k}^{(k)}(1) = k! \sum_{\nu=0}^{k}
    \binom{2k-n}{k-\nu} \BF_{n,\nu}.
\]
\end{theorem}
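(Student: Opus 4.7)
The plan is to attack this identity by direct expansion. First I would expand $\BC_{n,2k}(x)$ as a Laurent polynomial in $x$ using the explicit formula \eqref{eq:bern-poly} for the Bernoulli polynomial, then differentiate $k$ times, evaluate at $x=1$, and finally apply the Chu--Vandermonde convolution to recognize the numbers $\BF_{n,\nu}$.

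Concretely, from $\BC_{n,2k}(x) = x^{2k}\BN_n(x^{-1})$ together with \eqref{eq:bern-poly} I would obtain
\[
  \BC_{n,2k}(x) = \sum_{j=0}^{n} \binom{n}{j}\BN_{n-j}\,x^{2k-j}.
\]
Differentiating $k$ times and evaluating at $x=1$ then gives
\[
  \BC_{n,2k}^{(k)}(1) = k!\sum_{j=0}^{n} \binom{n}{j}\binom{2k-j}{k}\BN_{n-j},
\]
where $\binom{2k-j}{k}$ is the generalized binomial coefficient (which is nonzero also for $j > 2k$). After the substitution $i = n-j$, this becomes
\[
  \BC_{n,2k}^{(k)}(1) = k!\sum_{i=0}^{n} \binom{n}{i}\binom{2k-n+i}{k}\BN_i.
\]

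The key step is then to split $\binom{2k-n+i}{k}$ by the Chu--Vandermonde identity
\[
  \binom{2k-n+i}{k} = \sum_{\nu=0}^{k} \binom{2k-n}{k-\nu}\binom{i}{\nu},
\]
which is valid for arbitrary $2k-n \in \ZZ$ and nonnegative integer $i$, since both sides are polynomials of degree $\leq k$ in the first argument and agree for all nonnegative integer values. Substituting this and interchanging the order of summation would yield
\[
  \BC_{n,2k}^{(k)}(1) = k!\sum_{\nu=0}^{k}\binom{2k-n}{k-\nu}\sum_{i=\nu}^{n}\binom{n}{i}\binom{i}{\nu}\BN_i
    = k!\sum_{\nu=0}^{k}\binom{2k-n}{k-\nu}\BF_{n,\nu},
\]
which is the claimed identity.

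The only delicate point is the consistent use of generalized binomial coefficients when $n > 2k$: then $2k-n < 0$, and the terms in the first sum with $j > 2k$ (where $2k-j$ is negative) contribute nonzero values that must be retained rather than truncated. Writing all binomials in generalized form from the outset lets the Chu--Vandermonde step absorb both the ``positive'' and ``negative'' contributions into a single clean sum against $\BF_{n,\nu}$, so no serious obstacle is expected beyond this bookkeeping.
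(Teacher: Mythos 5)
Your proposal is correct, and it follows a genuinely different route from the paper's. The paper deduces this theorem in one line from Theorem~\ref{thm:BF-deriv}, specializing \eqref{eq:recip-BF-2} with $\ell = k$ and $k$ replaced by $2k$; that theorem in turn rests on the function-level factorization $\BC_{n,k}(x+1) = \BF_n(x)\,(x+1)^{k-n}$ (a consequence of the identity $\BC_{n,n}(x+1) = \BF_n(x)$ from Theorem~\ref{thm:BF-reflect}) together with Leibniz's rule. You instead argue entirely at the coefficient level: a Laurent expansion of $\BC_{n,2k}(x)$, term-by-term differentiation using generalized binomial coefficients, and the polynomial Chu--Vandermonde identity $\binom{2k-n+i}{k} = \sum_{\nu=0}^{k}\binom{2k-n}{k-\nu}\binom{i}{\nu}$, which is indeed valid for arbitrary integer $2k-n$ and reassembles the double sum into the numbers $\BF_{n,\nu}$. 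Your Vandermonde step is effectively the coefficient-level shadow of the paper's Leibniz step (convolving $\BF_n(x)$ against the binomial expansion of $(x+1)^{k-n}$), but you never need to recognize $\BF_n(x)$ as a polynomial, prove any reflection property, or introduce the formal power series $\BF_{n,k}(x)$ with its convergence restriction $\norm{x}<1$ when $k<n$; your bookkeeping of the generalized binomials for $j>2k$ (nonzero, to be retained) versus $k<j\leq 2k$ (vanishing) is exactly right. What your shortcut gives up is the structural content: the paper's detour through $\BF_n(x)$ is reused later (Theorems~\ref{thm:BF-poly} and~\ref{thm:BN-FP-poly}) to interpret the vanishing of $\BC_{n,2k}^{(k)}(1)$ for $k \geq (n+1)/2$ as an anti-palindromy phenomenon, which a purely computational verification does not expose. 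As a small bonus, your intermediate formula $\BC_{n,2k}^{(k)}(1) = k!\sum_{j=0}^{n}\binom{n}{j}\binom{2k-j}{k}\BN_{n-j}$ is a sign-free variant of Theorem~\ref{thm:coeff-deriv}, obtained here without Hoppe's formula.
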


Similar to Corollary~\ref{cor:bern-recurr}, we also obtain a recurrence
relation as follows.

\begin{corollary} \label{cor:bern-recurr-2}
Let $n \geq 1$ be odd. For $(n+1)/2 \leq k \leq n$, we have
\[
  \sum_{\nu=0}^{k} \binom{2k-n}{k-\nu} \BF_{n,\nu} = 0.
\]
\end{corollary}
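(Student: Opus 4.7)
The plan is to deduce this corollary from Theorem \ref{thm:coeff-deriv-2} in the same way that Corollary \ref{cor:bern-recurr} follows from Theorem \ref{thm:coeff-deriv}. Both theorems give two different explicit formulas for the single quantity $\BC_{n,2k}^{(k)}(1)$, so the vanishing of this quantity automatically yields both corollaries in parallel.

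First I would record the vanishing statement that drives Corollary \ref{cor:bern-recurr}. By Theorem \ref{thm:coeff-deriv} we have $\bb_{n,k} = \BC_{n,2k}^{(k)}(1)$ and, for odd $n \geq 3$, the identification $\fh_{n,j} = \bb_{n,j+1}$ whenever $0 \leq j < n$. Via the substitution \eqref{eq:coeff-subst}, $\fh_{n,j}$ is, up to a nonzero scalar, the coefficient of $y^j$ in $F_n$; since $\deg F_n = \dd_n = (n-3)/2$, we have $\fh_{n,j} = 0$ for every $j > \dd_n$. Translating this back to $\bb_{n,k}$, I obtain
\[
  \BC_{n,2k}^{(k)}(1) = 0 \quad \text{for odd } n \geq 3 \text{ and } (n+1)/2 \leq k \leq n.
\]
The upper bound $k \leq n$ is exactly the domain where $\bb_{n,k}$ is defined in Theorem \ref{thm:coeff-deriv}, so the range matches that of the corollary.

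Next, I invoke Theorem \ref{thm:coeff-deriv-2}: for the same $n$ and $k$,
\[
  0 = \BC_{n,2k}^{(k)}(1) = k! \sum_{\nu=0}^{k} \binom{2k-n}{k-\nu} \BF_{n,\nu},
\]
and cancelling $k!$ gives the desired identity. The residual case $n=1$ forces $k=1$, and the single relation $\BF_{1,0} + \BF_{1,1} = 0$ follows at once from the reflection $\BF_{n,k} = (-1)^n \BF_{n,n-k}$ recorded just before the theorem (or from $\BN_0 = 1$, $\BN_1 = -\tfrac{1}{2}$ by direct computation).

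There is no genuine obstacle here once Theorem \ref{thm:coeff-deriv-2} is in place; the corollary is a one-line consequence. The only points worth checking carefully are that the ranges of $k$ in the two theorems and in the corollary line up, and that the substitution \eqref{eq:coeff-subst} together with the degree bound $\dd_n = (n-3)/2$ really covers every index in $(n+1)/2 \leq k \leq n$ — both verifications are straightforward bookkeeping.
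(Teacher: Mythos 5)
Your proof is correct and takes essentially the same route as the paper: there, Corollary \ref{cor:bern-recurr-2} is likewise obtained by combining the vanishing of $\BC_{n,2k}^{(k)}(1) = \bb_{n,k}$ for odd $n$ and $(n+1)/2 \leq k \leq n$ (which comes from Theorem \ref{thm:coeff-deriv} together with the degree bound $\fh_{n,j}=0$ for $j > \dd_n$, with $\bb_{1,1}=0$ covering $n=1$) with the alternative expression for $\BC_{n,2k}^{(k)}(1)$ in Theorem \ref{thm:coeff-deriv-2}. Your handling of $n=1$ via the reflection relation rather than the tabulated value $\bb_{1,1}=0$ is an immaterial variation.
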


Comparing Theorems~\ref{thm:coeff-deriv} and~\ref{thm:coeff-deriv-2},
it easily follows that the central coefficients obey for $n \geq 0$ the relation
(see Tables~\ref{tbl:bb-coeff} and~\ref{tbl:BF-coeff}) that
\[
  \bb_{2n,n} = n! \, \BF_{2n,n}.
\]
Furthermore, $\bb_{n,0} = \BF_{n,0} = (-1)^n \, \BN_n$.

For the sake of completeness, we revisit recurrence~\eqref{eq:Knuth-recurr} of
Knuth and adapt the formula for use with the coefficients $\ff_{n,k}$.
This can be accomplished by using the substitutions
\begin{equation} \label{eq:subst}
  m = \dd_n + 2 = \frac{n+1}{2} \andq
  \frac{2^{m-k}}{n+1} A_k^{(m)} = \ff_{n,\dd_n-k}.
\end{equation}
After some rearranging of terms and simplifications, we derive Knuth's
recurrence for $\ff_{n,k}$ with different indexing in the form below that
supplements Theorem~\ref{thm:coeff-recurr}. Note that Knuth did not mention
that his recurrence could be reduced as follows.

\begin{theorem}[Knuth~\cite{Knuth:1993} (1993)]
Let $n \geq 5$ be odd. For $0 \leq \ell < \dd_n$, we have
\[
  \sum_{k=\ell}^{\dd_n} \binom{k+2}{2(k-\ell) + 1}
    \mleft(\frac{1}{2} \mright)^{\!k-\ell} \ff_{n,k} = 0.
\]
The sum runs over $\ell \leq k \leq \min(2\ell+1,\dd_n)$
due to vanishing binomial coefficients.
\end{theorem}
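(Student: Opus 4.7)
The plan is to derive this recurrence as a direct consequence of Knuth's original recurrence \eqref{eq:Knuth-recurr} under the change of variables \eqref{eq:subst}, which is exactly the path the author hints at in the sentence preceding the statement. Since Knuth's formula is already proved in \cite{Knuth:1993}, no new structural idea is needed; the work is entirely an index manipulation.

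First I would recall that for odd $n \geq 5$ we have $\dd_n = (n-3)/2$ and $m = \dd_n + 2 = (n+1)/2$. Using the relation $A_j^{(m)} = \frac{n+1}{2^{m-j}} \, \ff_{n,\dd_n-j}$ from \eqref{eq:subst}, Knuth's identity becomes, after dividing by $n+1$,
\[
  \sum_{j=0}^{k} \binom{m-j}{2k+1-2j} \frac{1}{2^{m-j}} \, \ff_{n,\dd_n-j} = 0 \quad (1 \leq k < m).
\]
Now I would reindex by setting $k' = \dd_n - j$ and introducing a new parameter $\ell = \dd_n - k$. Since $m - j = 2 + k'$, the binomial coefficient turns into $\binom{k'+2}{2(k'-\ell)+1}$; the denominator becomes $2^{k'+2}$; and the summation range $0 \leq j \leq k$ becomes $\ell \leq k' \leq \dd_n$. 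Multiplying through by $2^{\ell+2}$ then collapses everything to the claimed
\[
  \sum_{k'=\ell}^{\dd_n} \binom{k'+2}{2(k'-\ell)+1} \mleft(\frac{1}{2} \mright)^{\!k'-\ell} \ff_{n,k'} = 0.
\]

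Finally I would verify the admissible range and the trimming statement. The Knuth constraint $1 \leq k < m$ translates to $-1 \leq \ell \leq \dd_n - 1$; the case $\ell = -1$ gives binomial coefficients $\binom{k'+2}{2k'+3}$ which all vanish for $k' \geq 0$, so it produces only the trivial identity and is harmlessly dropped, leaving exactly $0 \leq \ell < \dd_n$ as stated. For the effective upper index, $\binom{k'+2}{2(k'-\ell)+1}$ vanishes precisely when $2(k'-\ell)+1 > k'+2$, i.e.\ when $k' > 2\ell+1$, which yields the restriction $\ell \leq k' \leq \min(2\ell+1, \dd_n)$. There is no real obstacle here beyond careful bookkeeping of the two reversed indexings (Knuth's $A_j$ is indexed from the top, whereas $\ff_{n,k}$ is indexed from the bottom); the only point where one could easily err is checking that $m - j = k' + 2$ and that the factors of $n+1$ indeed cancel, so I would double-check those two simplifications explicitly before writing the final chain of equalities.
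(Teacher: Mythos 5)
Your proposal is correct and is precisely the route the paper takes: the paper states that the theorem follows from Knuth's recurrence \eqref{eq:Knuth-recurr} via the substitution \eqref{eq:subst} ``after some rearranging of terms and simplifications,'' and your reindexing $k' = \dd_n - j$, $\ell = \dd_n - k$, together with the cancellation of $n+1$ and the factor $2^{\ell+2}$, supplies exactly those omitted details. Your handling of the boundary case $\ell = -1$ (all binomial coefficients $\binom{k'+2}{2k'+3}$ vanish) and the effective range $\ell \leq k \leq \min(2\ell+1,\dd_n)$ is also correct.
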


Comparing the coefficients $\beta_k^{(m)}$ and $\ff_{n,k}$, one observes that
their explicit formulas in Theorems~\ref{thm:Schroeder} and~\ref{thm:Gessel-Viennot},
respectively, have opposite indexing and differ in terms. Schr\"oder's arguments
are simple and straightforward in showing that the coefficients $\beta_k^{(m)}$
are positive. Relations~\eqref{eq:coeff-A-b} and~\eqref{eq:subst} then imply
that the coefficients~$\ff_{n,k}$ must be nonzero and alternate in sign.
The latter properties were shown by Gessel and Viennot in a different and more
complicated way, giving a combinatorial interpretation.

However, the proofs of our theorems are self-contained and independent of
Theorems~\ref{thm:Schroeder} and~\ref{thm:Gessel-Viennot} by using other
approaches. Finally, one may show by \eqref{eq:subst} that both
recurrences~\eqref{eq:Jacobi-recurr} and \eqref{eq:Schroeder-recurr} of Jacobi
and Schr\"oder, respectively, are equivalent to \eqref{eq:coeff-recurr-2} of
Theorem~\ref{thm:FP-recurr-2}. The details are left to the reader.

The rest of the paper is organized as follows. The next section is devoted to
preliminaries. Section~\ref{sec:proof-1} contains the proofs of
Theorems~\ref{thm:FP-recurr-1} and~\ref{thm:FP-recurr-2}.
In Section~\ref{sec:proof-2}, we use Hoppe's formula to derive identities
involving derivatives of composite functions, in addition revealing a
connection with the Lah numbers. Furthermore, Hoppe's formula leads to a proof
of Theorem~\ref{thm:coeff-recurr}. In Section~\ref{sec:proof-3}, we study
properties of the reciprocal Bernoulli polynomials~$\BC_{n,k}(x)$. This results
in a proof of Theorem~\ref{thm:coeff-deriv}. Section~\ref{sec:sym}
shows some properties of the numbers~$\BF_{n,k}$, which also involve the
Genocchi numbers. The related polynomials $\BF_{n,k}(x)$ lead to a proof of
Theorem~\ref{thm:coeff-deriv-2}. Finally, it is shown that the \textdef{central}
coefficients of~$\BF_{n,k}(x)$ are connected with the coefficients of the
Faulhaber polynomials by a combined and simplified relation, see Theorem
\ref{thm:BN-FP-poly}. The final section, Section~\ref{sec:con},
contains the conclusion.


\section{Preliminaries}

The following notation and relations, which we use henceforth, can be found,
e.g., in the books~\cite{Comtet:1974,Graham&others:1994,Prasolov:2010}.
Define the falling factorial as
\[
  (x)_0 = 1, \quad (x)_n = x(x-1) \cdots (x-n+1) \quad (n \geq 1).
\]
We have the well-known relations
\begin{equation} \label{eq:binom-neg}
  \frac{(x)_n}{n!} = \binom{x}{n} \andq \binom{-x}{n} = (-1)^n \binom{x + n-1}{n}.
\end{equation}

The forward difference operator $\dif$ and its powers are given in general by
\[
  \dif^n f(x) = \sum_{\nu=0}^n  \binom{n}{\nu} (-1)^{n-\nu} f(x+\nu) \quad (n \geq 0).
\]
In particular, we have the rule
\[
  \dif^k \binom{x}{n} = \binom{x}{n-k} \quad (n \geq k \geq 0).
\]
We use the expression, for example, $\dif^n f(x+y) \valueat{x = 1}$\vspace*{-1.3ex}
to indicate the variable and an initial value when needed.

The Chu–Vandermonde identity is given by
\[
  \sum_{\nu=0}^{n} \binom{x}{\nu} \binom{y}{n-\nu} = \binom{x + y}{n}.
\]
The above identity can be rewritten by~\eqref{eq:binom-neg} as an alternating sum such that
\begin{equation} \label{eq:Chu-Vand-alt}
  \sum_{\nu=0}^{n} (-1)^\nu \binom{x + \nu-1}{\nu} \binom{y}{n-\nu} = \binom{y - x}{n}.
\end{equation}

The Bernoulli polynomials satisfy as an Appell sequence (see~\cite{Appell:1880})
the equivalent properties:
\begin{alignat}{2}
  \BN'_n(x) &= n \, \BN_{n-1}(x) &\quad &(n \geq 1), \nonumber \\
  \BN_n^{(k)}(x) &= (n)_k \, \BN_{n-k}(x) &\quad &(n \geq k \geq 1),
    \label{eq:bern-deriv} \\
  \BN_n(x+y) &= \sum_{k=0}^{n} \binom{n}{k} \BN_{n-k}(x) \, y^k &\quad &(n \geq 0).
    \label{eq:bern-trans} \\
\shortintertext{Furthermore, there are the difference and reflection relations:}
  \dif \BN_n(x) &= n x^{n-1} &\quad &(n \geq 1), \label{eq:bern-diff} \\
  \BN_n(1-x) &= (-1)^n \, \BN_n(x) &\quad &(n \geq 0). \label{eq:bern-refl}
\end{alignat}
Similarly, we deduce from~\eqref{eq:sum-poly} the following relations
for power sums:
\begin{equation} \label{eq:sum-deriv}
\begin{alignedat}{2}
  S'_n(x) &= \BN_n(x) = n S_{n-1}(x) + \BN_n &\quad &(n \geq 1), \\
  S_n^{(k+1)}(x) &= (n)_k \, \BN_{n-k}(x) &\quad &(n \geq k \geq 0).
\end{alignedat}
\end{equation}


\section{Proofs of Theorems \pref{thm:FP-recurr-1} and \pref{thm:FP-recurr-2}}
\label{sec:proof-1}

Here we derive recurrences between Faulhaber polynomials $F_n$, $F_{n-1}$, and
$F_{n-2}$. For this purpose, we shall need the following identities.

\begin{lemma} \label{lem:S12-ident}
Let $y = S_1(x) = \binom{x}{2}$ and regard derivatives with respect to~$x$.
Then we have
\[
  y' = x - \tfrac{1}{2}, \quad {y'}^2 = 2y + \tfrac{1}{4}, \quad
  S_2(x) = \tfrac{2}{3} y y', \quad S'_2(x) = 2 y + \tfrac{1}{6}.
\]
\end{lemma}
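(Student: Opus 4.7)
The plan is to verify all four identities by direct calculation, since each follows in one or two lines from the explicit formulas $y = x(x-1)/2$ and $S_2(x) = \tfrac{1}{6}x(x-1)(2x-1)$ from \eqref{eq:S2-sum}.

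First I would compute $y' = \tfrac{d}{dx}\bigl(\tfrac{x^2-x}{2}\bigr) = x - \tfrac{1}{2}$, which gives the first identity. Squaring then yields $(y')^2 = x^2 - x + \tfrac{1}{4} = 2 \cdot \tfrac{x(x-1)}{2} + \tfrac{1}{4} = 2y + \tfrac{1}{4}$, establishing the second identity. These are purely algebraic and require no external input.

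For the third identity, I would invoke \eqref{eq:S2-sum}, which already rewrites $S_2(x) = \tfrac{1}{3} S_1(x)(2x-1)$. Since $2x - 1 = 2y'$ and $S_1(x) = y$, this instantly gives $S_2(x) = \tfrac{2}{3} y y'$. For the fourth identity, there are two equally short routes: either differentiate $\tfrac{2}{3}yy'$ using $y'' = 1$ together with the already-proved $(y')^2 = 2y + \tfrac{1}{4}$, to get $\tfrac{2}{3}\bigl((y')^2 + y\bigr) = \tfrac{2}{3}(3y + \tfrac{1}{4}) = 2y + \tfrac{1}{6}$; or alternatively use \eqref{eq:sum-deriv}, which gives $S_2'(x) = \BN_2(x) = x^2 - x + \tfrac{1}{6} = 2y + \tfrac{1}{6}$.

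There is no real obstacle here; the lemma is a bookkeeping result collecting the precise form of the substitution $y = \binom{x}{2}$ and how $S_2$ and its derivative look in this variable. The only care required is being explicit that derivatives are taken with respect to $x$, and choosing a consistent route for the last identity so that the proof remains two or three lines long.
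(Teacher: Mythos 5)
Your proof is correct and follows essentially the same route as the paper, which simply cites \eqref{eq:S2-sum} together with the identity $(x-\tfrac{1}{2})^2 = 2y + \tfrac{1}{4}$; you merely spell out the direct computations that the paper leaves implicit. Both of your suggested routes for the last identity (differentiating $\tfrac{2}{3}yy'$, or invoking $S_2'(x) = \BN_2(x)$ via \eqref{eq:sum-deriv}) are consistent with the paper's intent.
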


\begin{proof}
This follows from \eqref{eq:S2-sum} and $(x - \frac{1}{2})^2 = 2y + \frac{1}{4}$,
as used before.
\end{proof}

Recall Theorem~\ref{thm:Faulhaber-Jacobi} and the operator~$\theta$.

\begin{proof}[Proof of Theorem~\ref{thm:FP-recurr-1}]
Let $y = S_1(x) = \binom{x}{2}$ and $n \geq 2$.
By Theorem~\ref{thm:Faulhaber-Jacobi}, we have
\[
  S_n(x) = f_n(x) F_n(y).
\]
Due to the property $S_n(2) = f_n(2) = 1$, it follows for $x=2$, and hence $y=1$, that
\[
  F_n(1) = \ff_{n,0} + \dotsb + \ff_{n,\dd_n} = 1.
\]
Since $y \to 0$ as $x \to 0$, we infer for even $n \geq 2$ that
\begin{equation}\label{eq:F-0-even}
  F_n(0) = \lim_{y \to 0} F_n( y ) = \lim_{x \to 0} \frac{S_n(x)}{S_2(x)}
    = \lim_{x \to 0} \frac{\BN_n(x)}{\BN_2(x)} = \frac{\BN_n}{\BN_2}= 6 \BN_n,
\end{equation}
where we have used L'H\^opital's rule together with \eqref{eq:bern-values} and
\eqref{eq:sum-deriv}. Using \eqref{eq:sum-deriv} again, we compute the
derivative of $S_n$ for $n \geq 3$ in two different ways such that
\begin{equation} \label{eq:S-deriv-2}
\begin{aligned}
  S'_n(x) &= f'_n(x) F_n(y) + f_n(x) y' F'_n(y) \\
    &= n S_{n-1}(x) + \BN_n = n f_{n-1}(x) F_{n-1}(y) + \BN_n.
\end{aligned}
\end{equation}
We now divide the proof into two cases and use Lemma~\ref{lem:S12-ident}
implicitly.

\Case{$n \geq 3$ odd} We have the setting:
\[
  f_n(x) = y^2, \quad f'_n(x) = 2 y y', \quad
    f_{n-1}(x) = \tfrac{2}{3} y y', \quad \BN_n = 0.
\]
Thus, \eqref{eq:S-deriv-2} turns into
\[
  2 y y' F_n(y) + y^2 y' F'_n(y) = \tfrac{2}{3} n y y' F_{n-1}(y),
\]
which reduces to
\[
  n F_{n-1}(y) = 3 \mleft( F_n(y) + \tfrac{1}{2} y F'_n(y) \mright)
   = 3 \theta_{y, \frac{1}{2}} F_n(y),
\]
as desired. Additionally, we obtain for $y = 0$ by using~\eqref{eq:F-0-even} that
\[
  F_n(0) = \frac{n}{3} F_{n-1}(0) = 2n \, \BN_{n-1}.
\]

\Case{$n \geq 4$ even} We have the setting:
\[
  f_n(x) = \tfrac{2}{3} y y', \quad f'_n(x) = 2 y + \tfrac{1}{6}, \quad
    f_{n-1}(x) = y^2, \quad \BN_n = \tfrac{1}{6} F_n(0).
\]
From~\eqref{eq:S-deriv-2}, we deduce that
\[
  \mleft( 2y + \tfrac{1}{6} \mright) F_n(y) + \tfrac{2}{3} y {y'}^2 F'_n(y)
    = n y^2 F_{n-1}(y) + \tfrac{1}{6} F_n(0).
\]
This simplifies to
\[
  n y^2 F_{n-1}(y) = \tfrac{1}{6} \bigl( F_n(y) - F_n(0) + y F'_n(y) \bigr)
    + 2y \mleft( F_n(y) + \tfrac{2}{3} y F'_n(y) \mright).
\]
Since $\theta_y F_n(0) = F_n(0)$, we can finally write
\[
  n y^2 F_{n-1}(y) = \tfrac{1}{6} \theta_y \bigl( F_n(y) - F_n(0) \bigr)
    + 2y \, \theta_{y,\frac{2}{3}} F_n(y).
\]

At the end, it remains to show that $(-1)^{\dd_n} F_n(0) > 0$ for $n \geq 2$.
Since $\dd_n = \floor{n/2} - 1$, this easily follows from the property
$(-1)^{\frac{n}{2}-1} \BN_n > 0$ for even $n \geq 2$, completing the proof.
\end{proof}

\begin{proof}[Proof of Theorem~\ref{thm:FP-recurr-2}]
Let $n \geq 5$ be odd. By Theorem~\ref{thm:FP-recurr-1}, we have for even $n-1$ that
\begin{align}
  (n-1) y^2 F_{n-2}(y) &= \tfrac{1}{6} \theta_y \bigl( F_{n-1}(y) - F_{n-1}(0) \bigr)
    + 2y \, \theta_{y,\frac{2}{3}} F_{n-1}(y), \label{eq:rel-1} \\
  n F_{n-1}(y) &= 3 \theta_{y, \frac{1}{2}} F_n(y), \label{eq:rel-2} \\
  n F_{n-1}(0) &= 3 F_n(0). \label{eq:rel-3} \\
\shortintertext{Recall that the operator $\theta$ is linear and commutative.
By multiplying~\eqref{eq:rel-1} by $n$ and plugging~\eqref{eq:rel-2} and~\eqref{eq:rel-3}
into \eqref{eq:rel-1}, we get}
  n(n-1) \, y^2 F_{n-2}(y) &= \tfrac{1}{2} \theta_{y,\frac{1}{2}} \theta_y \bigl( F_n(y) - F_n(0) \bigr)
    + 6y \, \theta_{y,\frac{1}{2}} \theta_{y,\frac{2}{3}} F_n(y), \nonumber
\end{align}
showing the claimed relation. Applying the operators gives
\begin{align*}
  & n(n-1) \sum_{k=0}^{\dd_n-1} \ff_{n-2,k} \, y^{k+2} \\
  &\quad = \frac{1}{2} \sum_{k=1}^{\dd_n} \left( 1 + \frac{k}{2} \right)
    (1 + k) \, \ff_{n,k} \, y^{k}
    + 6 \sum_{k=0}^{\dd_n} \left( 1 + \frac{k}{2} \right)
    \left( 1 + \frac{2}{3} k \right) \ff_{n,k} \, y^{k+1}.
\end{align*}
Note that $\ff_{n,k} = 0$ for $k > \dd_n$.
By comparing the coefficients, we get for $k \geq 0$ that
\[
  n(n-1) \, \ff_{n-2,k} = \frac{1}{2} \left( 1 + \frac{k+2}{2} \right) (3 + k) \, \ff_{n,k+2}
    + 6 \left( 1 + \frac{k+1}{2} \right) \left( 1 + \frac{2}{3} (k+1) \right) \ff_{n,k+1},
\]
which finally turns into
\[
  \binom{n}{2} \, \ff_{n-2,k} = \frac{1}{2} \binom{2(k+3)}{2} \, \ff_{n,k+1}
    + \frac{1}{4} \binom{k+4}{2} \, \ff_{n,k+2}.
\]

For $k = \dd_n - 1 = (n-5)/2$ and noting that $\ff_{n,k+2} = 0$,
we obtain the plain recurrence
\begin{equation} \label{eq:coeff-lead}
  \ff_{n-2,\dd_n - 1} = \frac{1}{2} \frac{n+1}{n-1} \, \ff_{n,\dd_n}.
\end{equation}
Since $\ff_{3,0} = F_3(0) = 6 \BN_2 = 1$ by Theorem~\ref{thm:FP-recurr-1}
and~\eqref{eq:bern-values}, we inductively infer by~\eqref{eq:coeff-lead} that
$\ff_{n,\dd_n} = 2^{(n+1)/2}/(n+1)$ for odd $n \geq 3$.

Alternatively, one can find the coefficient $\ff_{n,\dd_n}$ by comparing the
leading coefficients in $S_n(x) = y^2 F_n(y)$ with $y = x(x-1)/2$
using~\eqref{eq:sum-coeff}. This proves the theorem.
\end{proof}


\section{Hoppe's formula}
\label{sec:proof-2}

Let $g, h \in C^\infty$ be smooth functions on $\CC$. In this section, we are
interested in evaluating the derivatives of a composite function $g(h(t))$.
Normally, this can be accomplished by applying the famous formula of Fa\`a di
Bruno~\cite{Bruno:1857} of 1855 (for a survey and its \textsl{curious} history
see Johnson~\cite{Johnson:2002}). However, there is a useful variant of this
formula published by Hoppe~\cite{Hoppe:1846} a few years before, which we will
apply for our purpose.

Define the signed Lah numbers for $n \geq k \geq 1$ by
\begin{equation} \label{eq:Lah-def}
  \LN_{n,k} = (-1)^n \frac{n!}{k!} \binom{n-1}{k-1},
\end{equation}
which were introduced by Lah~\cite{Lah:1954} in 1954
(cf.~Comtet~\cite[Ex.\,2, p.\,156]{Comtet:1974}).

\begin{theorem}[Hoppe~\cite{Hoppe:1846} (1846)] \label{thm:Hoppe}
For $n \geq 1$, we have
\[
  \dop_t^n \, g(h(t)) = \sum_{k=1}^{n} \frac{g^{(k)}(h(t))}{k!} \, \psi_{n,k}(h(t)),
\]
where
\[
  \psi_{n,k}(h(t)) = \sum_{j=1}^{k} \binom{k}{j} (-1)^{k-j} h(t)^{k-j} \, \dop_t^n \, h(t)^j.
\]
\end{theorem}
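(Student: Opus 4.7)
The plan is to exploit the translation trick $g(h(t+s)) = g\bigl(h(t) + (h(t+s) - h(t))\bigr)$ and Taylor-expand $g$ around $h(t)$ in the small increment $h(t+s) - h(t)$, then extract the coefficient of $s^n/n!$. First I would write the formal expansion
\[
  g(h(t+s)) = \sum_{k=0}^{\infty} \frac{g^{(k)}(h(t))}{k!} \bigl(h(t+s) - h(t)\bigr)^k.
\]
Since $h(t+s) - h(t) = O(s)$ near $s=0$, each summand with index $k$ contributes an $O(s^k)$ factor. Applying $\dop_s^n$ and evaluating at $s = 0$ therefore kills every term with $k > n$, and the $k = 0$ term (a constant in $s$) vanishes as well for $n \geq 1$. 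Meanwhile, by the chain rule one has $\dop_s^n g(h(t+s))\big|_{s=0} = \dop_t^n g(h(t))$, which produces the left-hand side of the claimed formula.

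Next I would expand the inner power by the binomial theorem,
\[
  \bigl(h(t+s) - h(t)\bigr)^k = \sum_{j=0}^{k} \binom{k}{j} (-1)^{k-j} h(t)^{k-j} h(t+s)^j,
\]
noting that $h(t)$ is constant in $s$. The $j = 0$ summand is again constant in $s$, hence contributes nothing after $\dop_s^n$ at $s=0$, and for $j \geq 1$ one has $\dop_s^n h(t+s)^j\big|_{s=0} = \dop_t^n h(t)^j$. Combining these expansions term-by-term yields exactly
\[
  \dop_t^n g(h(t)) = \sum_{k=1}^{n} \frac{g^{(k)}(h(t))}{k!} \sum_{j=1}^{k} \binom{k}{j} (-1)^{k-j} h(t)^{k-j} \, \dop_t^n h(t)^j,
\]
which is the stated identity with $\psi_{n,k}(h(t))$ as defined.

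The only technical point, which I expect to be a minor obstacle rather than a genuine difficulty, is justifying the term-by-term differentiation of the infinite Taylor series. Since $g, h \in C^\infty$, the cleanest way is to truncate at order $n$: write $g(u + v) = \sum_{k=0}^{n} g^{(k)}(u) v^k / k! + R_n(u,v)$ where the remainder $R_n(u,v)$ vanishes to order $n+1$ in $v$, and observe that with $u = h(t)$ and $v = h(t+s) - h(t) = O(s)$ the remainder is $O(s^{n+1})$, so $\dop_s^n R_n\big|_{s=0} = 0$. This makes the derivation a finite sum manipulation, removing any convergence issue. An alternative proof is induction on $n$, differentiating the claimed identity once and using $\dop_t \psi_{n,k} = \psi_{n+1,k} + \psi_{n+1,k+1}$-type relations, but the Taylor-expansion route above is considerably shorter and transparently explains why the coefficients take the combinatorial form $\binom{k}{j}(-1)^{k-j} h(t)^{k-j} \dop_t^n h(t)^j$.
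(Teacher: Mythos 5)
Your proposal is correct, but there is nothing in the paper to compare it against: the paper states Theorem~\ref{thm:Hoppe} as a classical result cited from Hoppe's 1846 memoir and uses it as a black box (in Lemma~\ref{lem:deriv-recip} and in the proof of Theorem~\ref{thm:coeff-recurr}); no proof is given there. Your Taylor-expansion argument is a complete and standard derivation, and it handles the one genuinely delicate point properly: term-by-term differentiation of an infinite Taylor series would be illegitimate for merely smooth $g$, and you avoid it by truncating at order $n$ with a remainder $R_n(u,v)$ vanishing to order $n+1$ in $v$. One small refinement worth making explicit: to conclude $\dop_s^n R_n\valueat{s=0} = 0$ from the bound $R_n = O(s^{n+1})$ you should note that $R_n$, composed with $u = h(t)$, $v = h(t+s)-h(t)$, is itself a smooth function of $s$ (e.g.\ via the integral form of the remainder, $R_n(u,v) = \frac{v^{n+1}}{n!}\int_0^1 (1-\tau)^n g^{(n+1)}(u+\tau v)\,\dd\tau$, which exhibits $R_n$ as $(h(t+s)-h(t))^{n+1}$ times a smooth factor, so the Leibniz rule kills all $s$-derivatives up to order $n$ at $s=0$ outright). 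With that in place, the rest is finite algebra exactly as you wrote it: the translation identity $\dop_s^n F(t+s)\valueat{s=0} = \dop_t^n F(t)$ applied to $F = g\circ h$ and to $F = h^j$, the binomial expansion of $\bigl(h(t+s)-h(t)\bigr)^k$ with $h(t)$ constant in $s$, and the vanishing of the $s$-constant terms ($k=0$ and $j=0$) produce precisely the coefficients $\psi_{n,k}(h(t))$ of the statement.
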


As a first application, we obtain the following lemma that we shall need later on
(cf.~Comtet \cite[Ex.\,7, p.\,157]{Comtet:1974}, given as an exercise for the
use of Fa\`a di Bruno's formula).

\begin{lemma} \label{lem:deriv-recip}
For $n \geq 1$, we have
\[
  \dop_t^n \, g(t^{-1}) = \sum_{k=1}^{n} \LN_{n,k} \, t^{-(n+k)} g^{(k)}(t^{-1}).
\]
\end{lemma}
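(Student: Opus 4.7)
The plan is to apply Hoppe's formula (Theorem~\ref{thm:Hoppe}) with $h(t) = t^{-1}$, evaluate the auxiliary function $\psi_{n,k}(t^{-1})$ explicitly, and recognize the resulting closed form as the signed Lah numbers defined in~\eqref{eq:Lah-def}.

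First, I would compute the inner derivatives $\dop_t^n t^{-j}$ for $j \geq 1$ by an elementary induction on $n$, yielding the closed form
\[
  \dop_t^n t^{-j} = (-1)^n \, n! \binom{j+n-1}{n} t^{-(j+n)}.
\]
Substituting this into Hoppe's definition of $\psi_{n,k}$ factors out a common $(-1)^n n! \, t^{-(n+k)}$ and leaves the purely combinatorial sum
\[
  T_{n,k} = \sum_{j=1}^{k} (-1)^{k-j} \binom{k}{j} \binom{j+n-1}{n}.
\]

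Second, I would evaluate $T_{n,k}$ as a $k$-th forward difference at $j=0$. Because $\binom{n-1}{n} = 0$ for $n \geq 1$, the missing $j=0$ term vanishes, so $T_{n,k}$ equals $\dif_j^k \binom{j+n-1}{n}$ evaluated at $j=0$. The Pascal-type identity $\dif_j \binom{j+n-1}{n} = \binom{j+n-1}{n-1}$, iterated $k$ times and then evaluated at $j=0$, collapses $T_{n,k}$ to $\binom{n-1}{k-1}$. Assembling the factors gives
\[
  \frac{\psi_{n,k}(t^{-1})}{k!} = (-1)^n \frac{n!}{k!} \binom{n-1}{k-1} t^{-(n+k)} = \LN_{n,k} \, t^{-(n+k)}
\]
via~\eqref{eq:Lah-def}, and substitution into Hoppe's formula yields the claimed identity.

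No genuine obstacle is expected; the only non-routine step is the binomial identity $T_{n,k} = \binom{n-1}{k-1}$, which reduces to the one-line finite-difference computation indicated above. An alternative route would be induction on $n$ using a Pascal-type recurrence for the Lah numbers themselves, but the derivation through Hoppe's formula is more direct and matches the framework already set up in the paper.
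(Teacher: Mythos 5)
Your proposal is correct and follows essentially the same route as the paper: apply Hoppe's formula with $h(t)=t^{-1}$, use $\dop_t^n\,t^{-j} = (-j)_n\,t^{-(j+n)} = (-1)^n n!\binom{j+n-1}{n}t^{-(j+n)}$, adjoin the vanishing $j=0$ term, and evaluate the alternating sum as the $k$-th finite difference $\dif^k\binom{x}{n}\valueat{x=n-1} = \binom{n-1}{k-1}$ before matching with \eqref{eq:Lah-def}. The paper's proof is this argument verbatim, so no further comparison is needed.
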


\begin{proof}
Let $n \geq 1$. Note that $\dop_t^n \, t^{-j} = (-j)_n \, t^{-(j+n)}$ and
$(-j)_n/n! = \binom{-j}{n} = (-1)^n \binom{j + n-1}{n}$ for $j \geq 1$.
We further infer for $1 \leq k \leq n$ that
\[
  \sum_{j=0}^{k} \binom{k}{j} (-1)^{k-j} \binom{j + n-1}{n}
    = \dif^k \binom{x}{n} \valueat{x = n-1} = \binom{n-1}{n-k} = \binom{n-1}{k-1}.
\]
In the latter sum one can omit the index $j=0$, since $\binom{j + n-1}{n} = 0$
in that case. Hence, we obtain
\vspace*{-5pt}
\begin{align*}
  \psi_{n,k}(t^{-1}) &= \sum_{j=1}^{k} \binom{k}{j} (-1)^{k-j} t^{-(k-j)} \, (-j)_n \, t^{-(j+n)} \\
    &= (-1)^n \, n! \, t^{-(n+k)} \sum_{j=1}^{k} \binom{k}{j} (-1)^{k-j} \binom{j + n-1}{n} \\
    &= (-1)^n \, n! \, t^{-(n+k)} \binom{n-1}{k-1}.
\end{align*}
Applying Theorem~\ref{thm:Hoppe} and using~\eqref{eq:Lah-def} finally yield the result.
\end{proof}

\begin{lemma} \label{lem:deriv-power}
Let $k, n \geq 1$ and $g(t) = (t(t-1))^k$. Then we have
\[
  \dop_t^n \, g(t) \valueat{t=0} =
  \begin{cases}
    (-1)^n n! \binom{k}{n-k}, & \text{if } k \leq n \leq 2k, \\
    0, & \text{otherwise}.
  \end{cases}
\]
\end{lemma}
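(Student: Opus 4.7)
The plan is to expand $g$ as a polynomial in $t$ via the binomial theorem, then read off the $n$-th derivative at $0$ from the coefficient of $t^n$. Since $g(t) = t^k(t-1)^k$, applying the binomial theorem to $(t-1)^k$ gives
\[
  g(t) = t^k \sum_{j=0}^{k} \binom{k}{j} (-1)^{k-j} t^j
    = \sum_{j=0}^{k} (-1)^{k-j} \binom{k}{j} t^{k+j}.
\]
So $g$ is a polynomial supported only on powers $t^m$ with $k \leq m \leq 2k$.

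Differentiating term by term gives $\dop_t^n t^{k+j} = (k+j)_n\, t^{k+j-n}$, and evaluating at $t=0$ picks out only the term with $k+j = n$. Thus $\dop_t^n g(t)|_{t=0} = 0$ whenever $n < k$ or $n > 2k$. In the remaining range $k \leq n \leq 2k$, the surviving index is $j = n-k$, which contributes
\[
  (-1)^{k-(n-k)} \binom{k}{n-k}\, n!
    = (-1)^{2k-n} \binom{k}{n-k}\, n!
    = (-1)^n\, n! \binom{k}{n-k},
\]
matching the claim.

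There is no real obstacle here; the only minor care needed is tracking the sign $(-1)^{2k-n}=(-1)^n$ and confirming that $j=n-k$ is a legitimate summation index precisely when $k \leq n \leq 2k$. (Alternatively, one could deduce the result from Theorem~\ref{thm:Hoppe} with $h(t)=t(t-1)$ and $g(u)=u^k$, but the direct expansion above is shorter and self-contained.)
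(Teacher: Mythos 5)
Your proof is correct and follows essentially the same route as the paper: expand $g(t) = \sum_{j=0}^{k} \binom{k}{j} (-1)^{k-j} t^{k+j}$ by the binomial theorem, observe that only the term $t^n$ with $j = n-k$ survives evaluation at $t=0$, and read off the value $(-1)^n n! \binom{k}{n-k}$ with the sign $(-1)^{2k-n} = (-1)^n$. The sign bookkeeping and the range condition $k \leq n \leq 2k$ are both handled correctly, so nothing further is needed.
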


\begin{proof}
Since
\begin{equation} \label{eq:g-pow-t}
  g(t) = \sum_{j=0}^{k} \binom{k}{j} (-1)^{k-j} t^{k+j},
\end{equation}
there exists a term with $t^n$ in~\eqref{eq:g-pow-t} if and only if
$k \leq n \leq 2k$; otherwise, $\dop_t^n \, g(t)$ vanishes at $t=0$.
Now assume that $n \in \set{k,\ldots,2k}$.
For the index $n = k+j$, we finally infer that
\[
  \dop_t^n \, g(t) \valueat{t=0} = (-1)^n n! \binom{k}{n-k}. \qedhere
\]
\end{proof}

With the help of Hoppe's theorem, we achieve a short proof of the recurrence
of the coefficients $\ff_{n,k}$ of $F_n$ as follows.

\begin{proof}[Proof of Theorem~\ref{thm:coeff-recurr}]
Let $n \geq 3$ be odd and $0 \leq \ell \leq \dd_n = (n-3)/2$.
Set $\FH_n(y) = y^2 \, F_n(y)$, wherein the coefficients $\ff_{n,k}$ of $F_n$
are shifted. We then have by Theorem~\ref{thm:Faulhaber-Jacobi} that
\begin{equation} \label{eq:S-F}
  S_n(x) = \FH_n(g(x)),
\end{equation}
where $g(x) = \binom{x}{2}$.
We evaluate the ($\ell+2$)-th derivative of the above equation at $x=0$.
On the left-hand side of~\eqref{eq:S-F}, we get by~\eqref{eq:sum-deriv} that
\[
  \dop_x^{(\ell+2)} \, S_n(x) \valueat{x=0} = (n)_{\ell+1} \, \BN_{n-(\ell+1)}(x)
    \valueat{x=0} = (n)_{\ell+1} \, \BN_{n-(\ell+1)}.
\]
On the right-hand side of~\eqref{eq:S-F}, we deduce by means of
Theorem~\ref{thm:Hoppe} with some coefficients $\lambda_{\ell,k}$ that
\[
  \dop_x^{(\ell+2)} \FH_n(g(x)) \valueat{x=0}
    = \sum_{k=0}^{\ell+2} \frac{\FH_n^{(k)}(0)}{k!} \, \lambda_{\ell+2,k}
    = \sum_{k=0}^{\ell} \ff_{n,k} \, \lambda_{\ell+2,k+2}.
\]
One finds with Lemma~\ref{lem:deriv-power} and $g(0) = 0$ that
\[
  \lambda_{\ell+2,k+2} = \psi_{\ell+2,k+2}(g(x)) \valueat{x=0}
    = \dop_x^{(\ell+2)} g(x)^{k+2} \! \valueat{x = 0}
    = (-1)^\ell \frac{(\ell+2)!}{2^{k+2}} \binom{k+2}{\ell-k},
\]
since $\ell \geq k$. If $2k+2 < \ell$, then the latter binomial coefficient
vanishes, being compatible with Lemma~\ref{lem:deriv-power}.
Both parts imply that
\[
  \binom{n}{\ell+1} \BN_{n-(\ell+1)} = (-1)^\ell (\ell+2)
    \sum_{k=0}^{\ell} \binom{k+2}{\ell-k} \mleft(\frac{1}{2} \mright)^{\!k+2} \ff_{n,k}.
\]
If $\ell$ is odd, then $\dd_n \geq 1$ and $n \geq 5$. Thus, $n-(\ell+1) \geq 3$
and $\BN_{n-(\ell+1)} = 0$, so we can neglect the factor $(-1)^\ell$ to derive
\eqref{eq:coeff-recurr}.
Finally, the condition $2k+2 \geq \ell$ yields $k \geq \max(0, \ell/2-1)$
for the needed indices in the sum of \eqref{eq:coeff-recurr}.
For $\ell \geq 3$, this leads to $\floor{(\ell-1)/2} \leq k \leq \ell$,
completing the proof.
\end{proof}


\section{Reciprocal Bernoulli polynomials}
\label{sec:proof-3}

Recall the \textdef{generalized} reciprocal Bernoulli polynomials
\[
  \BC_{n,k}(x) = x^k \, \BN_n(x^{-1}) \quad (n \geq 0, \, k \in \ZZ).
\]
Since $\BN_n(x)$ is a monic polynomial of degree $n$, due to \eqref{eq:bern-poly}
and \eqref{eq:bern-values}, we obtain
\begin{align}
  \BC_{n,k}(0) &= \begin{cases}
    \infty, & \text{if } k < n, \\
    1, & \text{if } k = n, \\
    0, & \text{if } k > n,
  \end{cases} \label{eq:recip-val-0}
\shortintertext{while the reflection formula~\eqref{eq:bern-refl} for $\BN_n(x)$
implies that}
  \BC_{n,k}(1) &= (-1)^n \, \BN_n \quad (n \geq 0, \, k \in \ZZ).
    \label{eq:recip-val-1}
\end{align}

A kind of reflection formula for $\BC_{n,k}(x)$ can be given as follows.

\begin{lemma}
Let $n \geq 1$, $k \in \ZZ$, and $x \neq 0$. Then we have
\[
  \BC_{n,k}(-x) = (-1)^{n+k} \left( \BC_{n,k}(x) + n x^{k-n+1} \right).
\]
\end{lemma}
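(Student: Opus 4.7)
The plan is to reduce the identity to two standard facts about $\BN_n$: the reflection formula $\BN_n(1-y) = (-1)^n \BN_n(y)$ from \eqref{eq:bern-refl}, and the difference relation $\BN_n(y+1) - \BN_n(y) = n y^{n-1}$ from \eqref{eq:bern-diff}. Both are valid as polynomial identities, so I may freely substitute $y = 1/x$ for $x \neq 0$.

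First I would unpack the left-hand side directly from the definition:
\[
  \BC_{n,k}(-x) = (-x)^k \BN_n\!\left(-\tfrac{1}{x}\right)
  = (-1)^k x^k \BN_n\!\left(-\tfrac{1}{x}\right).
\]
Next I would rewrite the argument $-1/x$ as $1 - (1 + 1/x)$ and apply the reflection formula \eqref{eq:bern-refl} to obtain
\[
  \BN_n\!\left(-\tfrac{1}{x}\right) = (-1)^n \BN_n\!\left(1 + \tfrac{1}{x}\right).
\]
Then I would use the difference relation \eqref{eq:bern-diff} with $y = 1/x$ to shift the argument by $1$:
\[
  \BN_n\!\left(1 + \tfrac{1}{x}\right) = \BN_n\!\left(\tfrac{1}{x}\right) + n \, x^{-(n-1)}.
\]

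Assembling these pieces and collecting the sign $(-1)^{n+k}$, the right-hand factor becomes
\[
  (-1)^{n+k}\!\left(x^k \BN_n\!\left(\tfrac{1}{x}\right) + n \, x^{k-n+1}\right)
  = (-1)^{n+k}\!\left(\BC_{n,k}(x) + n \, x^{k-n+1}\right),
\]
which is the desired identity. The argument is essentially bookkeeping: the only real step is recognizing that applying \eqref{eq:bern-refl} introduces a shift by $1$ in the argument, which then must be undone via \eqref{eq:bern-diff}, producing precisely the extra term $n x^{k-n+1}$. No difficulty is anticipated, since the integer $k$ plays no role beyond tracking the overall power of $x$, and the hypothesis $n \geq 1$ is needed only to ensure that the difference relation \eqref{eq:bern-diff} applies.
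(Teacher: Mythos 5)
Your proof is correct and follows essentially the same route as the paper: both derive $\BN_n(-y) = (-1)^n\bigl(\BN_n(y) + n\,y^{n-1}\bigr)$ by combining the reflection formula \eqref{eq:bern-refl} with the difference relation \eqref{eq:bern-diff}, and then substitute $y = x^{-1}$ and multiply by $(-x)^k$. The only cosmetic difference is that the paper states this key identity once for a general argument and then specializes, whereas you carry out the reflection and shift directly at the argument $-1/x$; the content is identical.
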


\begin{proof}
From~\eqref{eq:bern-diff} and~\eqref{eq:bern-refl}, we infer that
\[
  \BN_n(-x) = (-1)^n \, \BN_n(x+1) = (-1)^n ( \BN_n(x) + n x^{n-1} ).
\]
Hence,
\[
  \BC_{n,k}(-x) = (-1)^k x^k \, \BN_n(-x^{-1})
    = (-1)^{n+k} x^k ( \BN_n(x^{-1}) + n x^{-n+1} ),
\]
showing the result.
\end{proof}

Next, we are interested in finding derivatives of $\BC_{n,k}$. For our purpose,
we can restrict the parameters $n$ and $k$ to simplify formulas.
Thanks to Lemma~\ref{lem:deriv-recip} and \eqref{eq:bern-deriv}, we obtain
the following formula involving the Lah numbers.

\begin{lemma}
For $n \geq \ell \geq 1$, we have
\begin{equation} \label{eq:recip-deriv}
  \dop_x^\ell \, \BN_n(x^{-1}) = \sum_{\nu=1}^{\ell} \LN_{\ell,\nu} \,
    x^{-(\ell+\nu)} (n)_\nu \, \BN_{n-\nu}(x^{-1}).
\end{equation}
\end{lemma}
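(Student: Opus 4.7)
The plan is to apply the two tools the paper has just assembled: Hoppe's formula (Lemma~\ref{lem:deriv-recip}) in the specialized form $\dop_t^n \, g(t^{-1}) = \sum_{k=1}^{n} \LN_{n,k} \, t^{-(n+k)} g^{(k)}(t^{-1})$, and the Appell‑sequence derivative identity \eqref{eq:bern-deriv}, namely $\BN_n^{(k)}(x) = (n)_k \, \BN_{n-k}(x)$. The claim is really just the composition of these two identities.

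First I would take $g = \BN_n$ in Lemma~\ref{lem:deriv-recip}, noting that $\BN_n$ is smooth so the hypotheses are trivially met, and rename the outer differentiation index so that the formula reads
\[
  \dop_x^{\ell} \, \BN_n(x^{-1}) = \sum_{\nu=1}^{\ell} \LN_{\ell,\nu} \, x^{-(\ell+\nu)} \, \BN_n^{(\nu)}(x^{-1}).
\]
Next I would substitute $\BN_n^{(\nu)}(x^{-1}) = (n)_\nu \, \BN_{n-\nu}(x^{-1})$ inside the sum. This substitution is legal exactly in the range $1 \leq \nu \leq \ell$, and here we need $\nu \leq n$; the hypothesis $n \geq \ell$ guarantees $\nu \leq \ell \leq n$, so \eqref{eq:bern-deriv} applies term by term. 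Plugging in yields the stated identity \eqref{eq:recip-deriv}.

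There is no real obstacle here beyond checking the index range: the only subtlety is ensuring that $\BN_n^{(\nu)}$ does not drop below degree zero, which is exactly what the assumption $n \geq \ell$ enforces. For $\nu = n$ (which can occur when $\ell = n$), the falling factorial gives $(n)_n = n!$ and $\BN_{n-\nu} = \BN_0 = 1$, so the boundary term is $\LN_{\ell,n} \, n! \, x^{-(\ell+n)}$, consistent with the right-hand side. Thus the proof is a two‑line chain of substitutions with a brief remark on the validity of the index range.
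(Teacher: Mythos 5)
Your proof is correct and matches the paper's own argument: the paper derives this lemma precisely by combining Lemma~\ref{lem:deriv-recip} (Hoppe's formula specialized to $g(t^{-1})$) with the Appell derivative property \eqref{eq:bern-deriv}, which is exactly your two-step substitution. Your added check that the hypothesis $n \geq \ell$ keeps every index $\nu$ within the range of validity of \eqref{eq:bern-deriv} is the right point to verify and is handled correctly.
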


First, we have the following simple rule.

\begin{lemma} \label{lem:recip-deriv}
For $n, k \geq 1$, we have
\[
  \BC'_{n,k}(x) = k \BC_{n,k-1}(x) - n \BC_{n-1,k-2}(x).
\]
\end{lemma}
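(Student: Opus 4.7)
The statement is a direct consequence of the product and chain rules, so I would treat it as a one-line computation rather than anything requiring machinery. The plan is to differentiate the defining identity $\BC_{n,k}(x) = x^k \, \BN_n(x^{-1})$ as a product, apply the chain rule to the inner composition $\BN_n(x^{-1})$, then invoke the Appell property $\BN'_n(y) = n\,\BN_{n-1}(y)$ from \eqref{eq:bern-deriv} to rewrite the derivative, and finally repackage the resulting two terms back into $\BC_{n,k-1}$ and $\BC_{n-1,k-2}$ using the definition.

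Concretely, first I would write
\[
  \BC'_{n,k}(x) = k\,x^{k-1}\,\BN_n(x^{-1}) + x^k \cdot \frac{d}{dx}\,\BN_n(x^{-1}).
\]
For the second term, the chain rule gives $\frac{d}{dx}\BN_n(x^{-1}) = -x^{-2}\,\BN'_n(x^{-1})$, and then \eqref{eq:bern-deriv} replaces this by $-n\,x^{-2}\,\BN_{n-1}(x^{-1})$.

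Substituting back, the first term is $k\,x^{k-1}\,\BN_n(x^{-1}) = k\,\BC_{n,k-1}(x)$ directly from the definition, and the second term becomes $-n\,x^{k-2}\,\BN_{n-1}(x^{-1}) = -n\,\BC_{n-1,k-2}(x)$. Adding these gives the claimed identity. The condition $n,k\geq 1$ ensures that $\BN_{n-1}$ is defined and that the exponent $k-1$ in $\BC_{n,k-1}$ is nonnegative under the same convention (while $\BC_{n-1,k-2}$ is well defined for any $k\in\ZZ$ by the generalized definition). There is no real obstacle: the only thing to be careful about is bookkeeping the sign produced by differentiating $x^{-1}$ and correctly identifying the indices on the two $\BC$-terms that appear.
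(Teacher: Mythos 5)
Your proof is correct and matches the paper's own argument exactly: the paper likewise differentiates $x^k \, \BN_n(x^{-1})$ via the product and chain rules together with \eqref{eq:bern-deriv}, obtaining $\BC'_{n,k}(x) = k x^{k-1} \BN_n(x^{-1}) - n x^k \BN_{n-1}(x^{-1})/x^2$ in one line. Your version just spells out the repackaging into $\BC_{n,k-1}$ and $\BC_{n-1,k-2}$ a bit more explicitly.
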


\begin{proof}
Evaluating the derivative by using \eqref{eq:bern-deriv} yields that
\[
  \BC'_{n,k}(x) = k x^{k-1} \, \BN_n(x^{-1}) - n x^k \, \BN_{n-1}(x^{-1}) / x^2.
    \qedhere
\]
\end{proof}

More generally, we arrive at the following result.

\begin{theorem} \label{thm:recip-deriv}
For $n, k \geq \ell \geq 0$, we have
\begin{equation} \label{eq:recip-deriv-2}
  \dop_x^\ell \, \BC_{n,k}(x) = \ell! \sum_{\nu=0}^{\ell} (-1)^\nu
    \binom{k-\nu}{k-\ell} \binom{n}{\nu} \BC_{n-\nu,k-\ell-\nu}(x).
\end{equation}
In particular, we have
\begin{equation} \label{eq:recip-deriv-3}
  \BC_{n,k}^{(\ell)}(1) = (-1)^n \ell! \sum_{\nu=0}^{\ell}
    \binom{k-\nu}{k-\ell} \binom{n}{\nu} \BN_{n-\nu}.
\end{equation}
\end{theorem}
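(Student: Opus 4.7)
The plan is to prove \eqref{eq:recip-deriv-2} by induction on $\ell$, anchored by Lemma~\ref{lem:recip-deriv}, and then to deduce \eqref{eq:recip-deriv-3} by specializing to $x=1$. For $\ell=0$ the claim is tautological, and for $\ell=1$ it reduces precisely to Lemma~\ref{lem:recip-deriv}, so both base cases come for free.

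For the inductive step I differentiate the right-hand side of the inductive hypothesis termwise, applying Lemma~\ref{lem:recip-deriv} to each $\BC_{n-\nu,\,k-\ell-\nu}(x)$:
\[
\BC'_{n-\nu,\,k-\ell-\nu}(x) = (k-\ell-\nu)\,\BC_{n-\nu,\,k-\ell-\nu-1}(x) - (n-\nu)\,\BC_{n-\nu-1,\,k-\ell-\nu-2}(x).
\]
The coefficient of $\BC_{n-\mu,\,k-(\ell+1)-\mu}(x)$ in the expanded sum picks up contributions from $\nu = \mu$ (via the first summand) and $\nu = \mu-1$ (via the second). After pulling out the common factor $\ell!\,(-1)^\mu$, matching with the predicted $(\ell+1)!\,(-1)^\mu \binom{k-\mu}{k-\ell-1}\binom{n}{\mu}$ reduces to the identity
\[
\binom{k-\mu}{k-\ell}(k-\ell-\mu)\binom{n}{\mu} + \binom{k-\mu+1}{k-\ell}(n-\mu+1)\binom{n}{\mu-1} = (\ell+1)\binom{k-\mu}{k-\ell-1}\binom{n}{\mu}.
\]
Using $(n-\mu+1)\binom{n}{\mu-1} = \mu\binom{n}{\mu}$ together with Pascal's rule $\binom{k-\mu+1}{k-\ell} = \binom{k-\mu}{k-\ell} + \binom{k-\mu}{k-\ell-1}$, the left-hand side collapses to $\binom{k-\mu}{k-\ell}(k-\ell)\binom{n}{\mu} + \mu\binom{k-\mu}{k-\ell-1}\binom{n}{\mu}$, and the elementary relation $\binom{a}{b}(a-b) = (b+1)\binom{a}{b+1}$ (with $a=k-\mu$, $b=\ell-\mu$) converts the first summand into $(\ell-\mu+1)\binom{k-\mu}{k-\ell-1}\binom{n}{\mu}$, completing the match.

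To derive \eqref{eq:recip-deriv-3}, I set $x=1$ in \eqref{eq:recip-deriv-2} and invoke \eqref{eq:recip-val-1} to replace $\BC_{n-\nu,\,k-\ell-\nu}(1)$ by $(-1)^{n-\nu}\BN_{n-\nu}$; the factor $(-1)^{-\nu}$ combines with the explicit $(-1)^\nu$ to produce a uniform prefactor $(-1)^n$ that pulls outside the sum. The only genuinely technical point is the binomial identity at the heart of the inductive step, but it collapses in a few lines via Pascal's rule and the standard falling-factorial manipulation, so there is no serious obstacle beyond careful index bookkeeping.
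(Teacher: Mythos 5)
Your proof is correct, but it takes a genuinely different route from the paper's own proof --- in fact, exactly the alternative route that the paper mentions but does not carry out: right after its proof, the paper remarks that ``one could also derive a proof of \eqref{eq:recip-deriv-2} by induction using Lemma~\ref{lem:recip-deriv}.'' That is precisely your argument. The paper instead proves \eqref{eq:recip-deriv-2} directly, expanding $\dop_x^\ell\,\BC_{n,k}(x)$ by Leibniz's rule, evaluating $\dop_x^j\,\BN_n(x^{-1})$ via the Lah-number formula of Lemma~\ref{lem:deriv-recip} (a consequence of Hoppe's theorem), and then collapsing the resulting double sum with the alternating Chu--Vandermonde identity \eqref{eq:Chu-Vand-alt}; that route is chosen to exhibit the origin of the formula and its connection with the Lah numbers, whereas yours is more elementary and self-contained, needing only Pascal's rule and standard binomial manipulations. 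Your key identity does check out: $(n-\mu+1)\binom{n}{\mu-1}=\mu\binom{n}{\mu}$, Pascal's rule, and $\binom{a}{b}(a-b)=(b+1)\binom{a}{b+1}$ reduce it as you describe, and the boundary cases $\mu=0$ and $\mu=\ell+1$ are absorbed by the conventions $\binom{n}{-1}=0$ and $\binom{k-\ell-1}{k-\ell}=0$. One point you should make explicit: in the inductive step you apply Lemma~\ref{lem:recip-deriv} to $\BC_{n-\nu,\,k-\ell-\nu}(x)$, whose second index can be zero or negative (for instance when $\nu>k-\ell$), which lies outside the lemma's stated hypothesis $n,k\geq 1$; this is harmless, since the lemma's one-line proof uses only $n\geq 1$ and is valid for every $k\in\ZZ$, but the extension deserves a sentence. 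Your passage to \eqref{eq:recip-deriv-3} via \eqref{eq:recip-val-1} is the same as the paper's.
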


\begin{proof}
Initially, we consider \eqref{eq:recip-deriv-2}. Since the case $\ell=0$ is
trivial, assume that $\ell \geq 1$. By Leibniz's rule, we can write
\begin{equation} \label{eq:deriv-expr}
  \dop_x^\ell \, \BC_{n,k}(x) = \sum_{j=0}^{\ell} \binom{\ell}{j}
    \big( \BN_n(x^{-1}) \big)^{(j)} \big( x^k \big)^{(\ell-j)}.
\end{equation}
Using~\eqref{eq:recip-deriv} and replacing the Lah numbers by~\eqref{eq:Lah-def},
we obtain
\[
  \dop_x^j \, \BN_n(x^{-1}) = (-1)^j j! \sum_{\nu=1}^{j} \binom{j-1}{\nu-1}
    x^{-(j+\nu)} \binom{n}{\nu} \BN_{n-\nu}(x^{-1}) \quad (1 \leq j \leq \ell).
\]
Allowing the convention $\binom{a}{-1} = \delta_{a,-1}$ by using Kronecker's
delta, we can extend the above summation by starting from index $\nu=0$.
This covers the case $j=0$, where the sum then equals $\BN_n(x^{-1})$.
Further, we have
\[
  \dop_x^{\ell-j} x^k = (k)_{\ell-j} \, x^{k-(\ell-j)}.
\]
Putting all of this together provides
\[
  \dop_x^\ell \, \BC_{n,k}(x) = \ell! \sum_{j=0}^{\ell} (-1)^j \binom{k}{\ell-j}
    \sum_{\nu=0}^{j} \binom{j-1}{\nu-1} \binom{n}{\nu} x^{k-\ell-\nu} \, \BN_{n-\nu}(x^{-1}).
\]
By changing the summation, this further simplifies with some coefficients $c_\nu$ to
\[
  \dop_x^\ell \, \BC_{n,k}(x) = \ell! \sum_{\nu=0}^{\ell} c_\nu \binom{n}{\nu}
    \BC_{n-\nu,k-\ell-\nu}(x).
\]
Now, we compute the coefficients $c_\nu$. By regarding the condition $j \geq \nu$
due to vanishing binomial coefficients, we obtain for $0 \leq \nu \leq \ell$ that
\[
  c_\nu = \sum_{j=\nu}^{\ell} (-1)^j \binom{k}{\ell-j} \binom{j-1}{\nu-1}.
\]
The case $\nu=0$ reduces to
\[
  c_0 = \binom{k}{\ell} = \binom{k}{k-\ell}.
\]
In the other case $\nu \geq 1$, we shift the index $j$ such that
\[
  c_\nu = \sum_{j=0}^{\ell-\nu} (-1)^{j+\nu} \binom{k}{\ell-\nu-j} \binom{j+\nu-1}{\nu-1}
    = (-1)^\nu \sum_{j=0}^{\ell-\nu} (-1)^j \binom{\nu+j-1}{j} \binom{k}{\ell-\nu-j}.
\]
By applying the alternative Chu-Vandermonde identity~\eqref{eq:Chu-Vand-alt},
we achieve that
\[
  c_\nu = (-1)^\nu \binom{k-\nu}{\ell-\nu} = (-1)^\nu \binom{k-\nu}{k-\ell}.
\]
Finally, the coefficients $c_\nu$ in both cases imply~\eqref{eq:recip-deriv-2}.
From relation~\eqref{eq:recip-val-1}, we then further infer~\eqref{eq:recip-deriv-3}.
This proves the theorem.
\end{proof}

Obviously, one could also derive a proof of~\eqref{eq:recip-deriv-2} by
induction using Lemma~\ref{lem:recip-deriv}. Here, we used a direct way to show
the origin of formula~\eqref{eq:recip-deriv-2}, as well as a certain connection
with the Lah numbers via~\eqref{eq:recip-deriv}.

\begin{remark}
Since the Bernoulli polynomials $\BN_n(x)$ form an Appell sequence, the results
in this section can be easily reformulated for any sequence of polynomials
$(p_n)_{n \geq 0}$ satisfying ${p'_n = n p_{n-1}}$ for ${n \geq 1}$.
See the follow-up paper \cite{Kellner:2021} for such results applied to Appell
polynomials.
\end{remark}

Now, we are ready to give a proof of Theorem~\ref{thm:coeff-deriv} below by
specializing some results. Define the functions
\begin{equation} \label{eq:PF-def}
  \PF_{n,k}(x) = \sum_{\nu=0}^{k} \binom{n}{\nu} (2k-\nu)_k \, x^{n-\nu}
    \quad (0 \leq k \leq n).
\end{equation}
Actually, the following recurrence of the functions $\PF_{n,k}(x) \in \ZZ[x]$
is essential for the proof.

\begin{lemma} \label{lem:PF-recurr}
Let $n - 2 \geq k \geq 0$. Then we have the recurrence
\begin{equation} \label{eq:PF-recurr}
  n(n-1) \PF_{n-2,k}(x) = \PF_{n,k+2}(x) - 2(2k+3) \PF_{n,k+1}(x).
\end{equation}
\end{lemma}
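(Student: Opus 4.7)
The plan is to prove the recurrence by comparing coefficients of $x^{n-j}$ on both sides. On the left, the coefficient at $x^{n-j}$ comes from the $\nu = j-2$ term of $\PF_{n-2,k}$ and equals $n(n-1)\binom{n-2}{j-2}(2k-j+2)_k$, supported on $j \in \{2,\ldots,k+2\}$. On the right, writing out both $\PF_{n,k+2}$ and $\PF_{n,k+1}$, the coefficient at $x^{n-j}$ equals $\binom{n}{j}\bigl[(2k+4-j)_{k+2} - 2(2k+3)(2k+2-j)_{k+1}\bigr]$, supported on $j \in \{0,\ldots,k+2\}$. The first simplification is the elementary identity $n(n-1)\binom{n-2}{j-2} = j(j-1)\binom{n}{j}$, which lets one cancel the binomial coefficients; the claim thus reduces to verifying, for $j = 0,1,\ldots,k+2$, the falling-factorial identity
\[
  (2k+4-j)_{k+2} - 2(2k+3)(2k+2-j)_{k+1} = j(j-1)(2k+2-j)_k.
\]

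The second step is to notice that all three falling factorials share the common factor
\[
  (2k+2-j)_k = (2k+2-j)(2k+1-j)\cdots(k+3-j),
\]
since $(2k+4-j)_{k+2} = (2k+4-j)(2k+3-j)\cdot(2k+2-j)_k$ and $(2k+2-j)_{k+1} = (k+2-j)\cdot(2k+2-j)_k$. After dividing through by this common factor, the claim becomes the single polynomial identity in $j$,
\[
  (2k+4-j)(2k+3-j) - 2(2k+3)(k+2-j) = j(j-1),
\]
which I would verify by expanding both sides: the quadratic terms in $j$ agree, the linear coefficient collapses via $-(4k+7)j + 2(2k+3)j = -j$, and the constant part vanishes since $(2k+4)(2k+3) - 2(2k+3)(k+2) = 0$.

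The only subtlety, and the point I expect to need a sentence of care, is the boundary behavior. At $j=0$ and $j=1$ the left-hand side of \eqref{eq:PF-recurr} contributes nothing to $x^{n-j}$; this is consistent with the reduced identity because its right-hand side has the factor $j(j-1)$. At $j = k+2$ the coefficient of $x^{n-j}$ in $\PF_{n,k+1}$ is beyond its summation range, but this is automatically handled because $(2k+2-j)_{k+1}$ specializes at $j = k+2$ to $(k)_{k+1} = 0$. Finally, the division by $(2k+2-j)_k$ is justified because, throughout the active range $0 \le j \le k+2$, the factors of $(2k+2-j)_k$ run from $2k+2-j \ge k$ down to $k+3-j \ge 1$ and hence are all positive (the degenerate case $k=0$ gives an empty product equal to $1$, and the reduced identity is then trivially checked).
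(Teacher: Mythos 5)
Your proof is correct and follows essentially the same route as the paper: compare coefficients of $x^{n-j}$, convert the left-hand side via $n(n-1)\binom{n-2}{j-2} = j(j-1)\binom{n}{j}$, pull out the common falling factorial $(2k+2-j)_k$, reduce to the quadratic identity $(2k+4-j)(2k+3-j) - 2(2k+3)(k+2-j) = j(j-1)$, and handle the boundary index $j=k+2$ by $(k)_{k+1}=0$. The only cosmetic difference is that the paper \emph{factors out} $(2k+2-j)_k$, writing the difference of the two sides as that factor times an identically vanishing quadratic, rather than dividing by it, which makes your final non-vanishing check unnecessary.
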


\begin{proof}
Let $n - 2 \geq k \geq 0$. First we consider the left-hand side of
\eqref{eq:PF-recurr}. Note that
\[
  n(n-1) \binom{n-2}{\nu-2} (2k-(\nu-2))_k
    = (\nu)_2 \binom{n}{\nu} (2k+2-\nu)_k
\]
for $2 \leq \nu \leq n$, and $(\nu)_2 = 0$ for $\nu = 1,2$.
By shifting the index $\nu$ and extending the summation, we can write
\begin{equation} \label{eq:PF-lhs}
  n(n-1) \PF_{n-2,k}(x)
    = \sum_{\nu=0}^{k+2} \binom{n}{\nu} (\nu)_2 (2k+2-\nu)_k \, x^{n-\nu}.
\end{equation}

We collect the coefficients of \eqref{eq:PF-recurr}
regarding $x^{n-\nu}$ for a fixed $\nu \in \set{0,\ldots,k+2}$.
The left-hand side yields $L = \binom{n}{\nu} (\nu)_2 (2k+2-\nu)_k$
using \eqref{eq:PF-lhs}, while the right-hand side provides
\[
  R = \binom{n}{\nu} (2(k+2)-\nu)_{k+2} - 2(2k+3) \binom{n}{\nu} (2(k+1)-\nu)_{k+1}.
\]
Note that for $\nu = k+2$ the coefficient in $\PF_{n,k+1}(x)$ vanishes,
which coincides in that case with
\[
  (2(k+1)-\nu)_{k+1} = (k)_{k+1} = 0,
\]
since $k \geq 0$. After factoring terms out, we get
$L - R = \binom{n}{\nu} (2k+2-\nu)_k \, T$,
where we infer for the remaining term the identity
\[
  T = (\nu)_2 - (2k+4-\nu)_2 + 2(2k+3)(k+2-\nu) = 0.
\]
This implies the result.
\end{proof}

\begin{remark}
One can extend Lemma~\ref{lem:PF-recurr} as follows. Since recurrence
\eqref{eq:PF-recurr} is linear, one can replace $x^{n-\nu}$ with terms of
any sequence, say $s_{n-\nu}$.
\end{remark}

\begin{proof}[Proof of Theorem~\ref{thm:coeff-deriv}]
For $n \geq k \geq 0$, we find by using Theorem~\ref{thm:recip-deriv} that
\begin{equation} \label{eq:recip-spec}
  \BC_{n,2k}^{(k)}(1) = (-1)^n k! \sum_{\nu=0}^{k}
    \binom{2k-\nu}{k} \binom{n}{\nu} \BN_{n-\nu}.
\end{equation}
Set $\bb_{n,k} = \BC_{n,2k}^{(k)}(1)$. One may observe the similarity
between~\eqref{eq:PF-def} and~\eqref{eq:recip-spec}.
Considering the above remark, we replace $x^{n-\nu}$ by $\BN_{n-\nu}$ in
\eqref{eq:PF-def} and apply Lemma~\ref{lem:PF-recurr}.
Thus, recurrence~\eqref{eq:PF-recurr} also holds for $\bb_{n,k}$ in place
of $\PF_{n,k}(x)$, being compatible with the parity of the sign $(-1)^n$
in~\eqref{eq:recip-spec}. More precisely, for $n \geq 2$ and
$0 \leq k \leq n-2$, we obtain
\begin{equation} \label{eq:recurr-b}
  \bb_{n,k+2} = 2 (2k+3) \, \bb_{n,k+1} + n(n-1) \, \bb_{n-2,k}.
\end{equation}

Now, let $n \geq 5$ be odd. We compare values of $\bb_{n,k}$ with $\fh_{n,k}$
for small $k$. By Theorem~\ref{thm:coeff-recurr} we can compute $\ff_{n,k}$ for
$k=0,1$, see Table~\ref{tbl:F-coeff-odd}. Using substitution~\eqref{eq:coeff-subst}
and formula~\eqref{eq:recip-spec}, we infer that
\begin{equation} \label{eq:init-fb-1}
  \fh_{n,0} = \bb_{n,1} = -n \BN_{n-1}, \quad
    \fh_{n,1} = \bb_{n,2} = -6n \BN_{n-1},
\end{equation}
as well as
\begin{equation} \label{eq:init-fb-2}
  \fh_{3,0} = \bb_{3,1} = -3 \BN_2 = -\tfrac{1}{2}.
\end{equation}

Furthermore, we can rewrite recurrence~\eqref{eq:coeff-recurr-2}
of Theorem~\ref{thm:FP-recurr-2} by~\eqref{eq:coeff-subst} as
\begin{equation} \label{eq:recurr-f}
  \fh_{n,k+2} = 2 (2k+5) \, \fh_{n,k+1} + n(n-1) \, \fh_{n-2,k},
\end{equation}
being valid for $k \geq 0$. Note that~\eqref{eq:recurr-b}, when shifted by
$k \mapsto k+1$, and~\eqref{eq:recurr-f} describe the same recurrence.
Since $\fh_{n,k} = \bb_{n,k+1}$ for the initial values in~\eqref{eq:init-fb-1}
and~\eqref{eq:init-fb-2}, the equality follows in full for odd $n \geq 3$
and $0 \leq k < n$ by induction using recurrences~\eqref{eq:recurr-b}
and~\eqref{eq:recurr-f}.
It remains to show that for odd $n \geq 3$, we have the property
\begin{equation} \label{eq:coeff-pos}
  (-1)^{\dd_n+1} \, \fh_{n,k} > 0 \quad (0 \leq k \leq \dd_n).
\end{equation}
The goal is now to use recurrence~\eqref{eq:recurr-f} reversely.
For $n=3$, property~\eqref{eq:coeff-pos} is valid by~\eqref{eq:init-fb-2}.
Inductive step $n-2$ to $n$ for odd $n \geq 5$:
We can rewrite~\eqref{eq:recurr-f} multiplied by $(-1)^{\dd_n+1}$ as
\begin{equation} \label{eq:recurr-f2}
  (-1)^{\dd_n+1} \, 2 (2k+5) \, \fh_{n,k+1}
    = (-1)^{\dd_n+1} \, \fh_{n,k+2} + (-1)^{\dd_{n-2}+1} \, n(n-1) \, \fh_{n-2,k},
\end{equation}
noting that $\dd_{n-2} = \dd_n - 1$.
We shall see that all corresponding terms in \eqref{eq:recurr-f2} are
nonnegative. By assumption $\fh_{n-2,k}$ satisfies~\eqref{eq:coeff-pos} for
$n-2$, so for $0 \leq k < \dd_n$.
Note that $\fh_{n,\dd_n+1} = 0$. For $k = \dd_n-1, \ldots, 0$, we iteratively
infer by~\eqref{eq:recurr-f2} that $\fh_{n,k}$ satisfies~\eqref{eq:coeff-pos}
for $1 \leq k \leq \dd_n$.
In case $k=0$, we have $(-1)^{\dd_n+1} \, \fh_{n,0} > 0$
by~\eqref{eq:init-fb-1}, since $(-1)^{(n-3)/2} \, \BN_{n-1} > 0$.
Finally, property~\eqref{eq:coeff-pos} is valid for $n$, showing the induction.
This proves the theorem.
\end{proof}


\section{Symmetry properties}
\label{sec:sym}

Recall the numbers (see Table~\ref{tbl:BF-coeff})
\[
  \BF_{n,k} = \sum_{\nu = k}^{n} \binom{n}{\nu} \binom{\nu}{k} \BN_\nu
    \quad (0 \leq k \leq n),
\]
and define the related functions
\[
  \BF_n(x) = \sum_{k=0}^{n} \BF_{n,k} \, x^k \quad (n \geq 0).
\]

We further need the Genocchi numbers $\GN_n$, which may be defined by
\[
  \frac{2t}{e^t+1} = \sum_{n=0}^{\infty} \GN_n \frac{t^n}{n!}
    \quad (\norm{t} < \pi).
\]
These numbers are intimately connected with the Bernoulli numbers by
\begin{equation} \label{eq:G-B}
  \GN_n = 2 ( 1 - 2^n ) \BN_n \quad (n \geq 0).
\end{equation}
In particular, $\GN_n = 0$ for $n=0$ and odd $n \geq 3$,
while the numbers $\GN_n$ are odd integers for $n=1$ and even
$n \geq 2$, see Comtet~\cite[pp.\,48--49]{Comtet:1974}.
The first few nonzero values are
\[
  \GN_1 = 1,\;\; \GN_2 = -1,\;\; \GN_4 = 1,\;\; \GN_6 = -3,\;\; \GN_8 = 17,\;\; \GN_{10} = -155.
\]

The numbers were introduced by Genocchi~\cite{Genocchi:1852} in 1852,
and were later named after him. Already in 1755,
Euler~\cite[Chap.\,VII, pp.\,496--497]{Euler:1755} considered these numbers
and used~\eqref{eq:G-B} to give a table of factorizations of the first few
numbers~$\GN_{2n}$.

A polynomial $p(x) = a_n x^n + \dotsb + a_1 x + a_0$ is called
\textdef{self-reciprocal} or \textdef{palindromic} when $p(x) = x^n p(x^{-1})$,
since the coefficients then satisfy $a_\nu = a_{n-\nu}$ for $0 \leq \nu \leq n$.
Alternatively, if the condition $a_\nu = -a_{n-\nu}$ (respectively,
$\norm{a_\nu} = \norm{a_{n-\nu}}$) holds, then $p$ is called
\textdef{antipalindromic} (respectively, \textdef{quasipalindromic}).
Note that the definitions do not depend on the degree of~$p$, since it can
happen that $a_n = a_0 = 0$, so $n \neq \deg p$ in that case.

\begin{theorem} \label{thm:BF-reflect}
Let $n \geq 0$. For $0 \leq k \leq n$, we have the reflection relation
\[
  \BF_{n,k} = (-1)^n \, \BF_{n,n-k}.
\]
Equivalently, the polynomials $\BF_n(x)$ are quasipalindromic such that
\[
  \BF_n(x) = (-1)^n x^n \, \BF_n(x^{-1}).
\]
In particular, we have the relations
\[
  \BF_{n,0} = (-1)^n \, \BN_n, \quad \BF_{n,n} = \BN_n, \andq
    \BF_n(x) = \BC_{n,n}(x+1).
\]
Moreover, special values are given by
\[
  \BF_n(-1) = 1, \quad \BF_n(0) = (-1)^n \, \BN_n, \andq
    \BF_n(1) = \BN_n + \tfrac{1}{2} \GN_n.
\]
\end{theorem}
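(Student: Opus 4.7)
My plan is to reduce essentially everything to the key identity $\BF_n(x) = \BC_{n,n}(x+1)$, from which the quasi-palindromic property and all special values drop out with one-line arguments.

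First I would compute $\BF_n(x)$ directly by swapping the order of summation:
\[
  \BF_n(x) = \sum_{k=0}^{n}\sum_{\nu=k}^{n}\binom{n}{\nu}\binom{\nu}{k}\BN_\nu x^k
    = \sum_{\nu=0}^{n}\binom{n}{\nu}\BN_\nu(1+x)^\nu.
\]
On the other side, expanding $\BN_n$ via \eqref{eq:bern-poly} gives
\[
  \BC_{n,n}(x+1) = (x+1)^n \BN_n\mleft(\frac{1}{x+1}\mright)
    = \sum_{k=0}^{n}\binom{n}{k}\BN_{n-k}(x+1)^{n-k},
\]
and relabeling $\nu = n-k$ matches the expression for $\BF_n(x)$. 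This establishes $\BF_n(x)=\BC_{n,n}(x+1)$.

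Next, I would derive the quasi-palindromic identity from the reflection formula \eqref{eq:bern-refl}. From the factored form $\BC_{n,n}(x+1) = (x+1)^n \BN_n(1/(x+1))$ one gets
\[
  x^n \BF_n(x^{-1}) = x^n(x^{-1}+1)^n \BN_n\mleft(\frac{x}{x+1}\mright)
    = (x+1)^n \BN_n\mleft(1 - \frac{1}{x+1}\mright),
\]
and since $\BN_n(1-t) = (-1)^n \BN_n(t)$ by \eqref{eq:bern-refl} with $t=1/(x+1)$, this equals $(-1)^n \BF_n(x)$. Matching the coefficient of $x^k$ on both sides of $\BF_n(x) = (-1)^n x^n \BF_n(x^{-1})$ yields the coefficient reflection $\BF_{n,k} = (-1)^n \BF_{n,n-k}$.

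Finally I would read off the special values. The boundary cases $\BF_{n,n}=\BN_n$ is immediate from the definition (only $\nu=n$ contributes), and $\BF_{n,0} = \sum_{\nu=0}^n \binom{n}{\nu}\BN_\nu = \BN_n(1) = (-1)^n \BN_n$ by \eqref{eq:bern-poly} and \eqref{eq:bern-refl}; these are also consistent with the reflection relation. For the evaluations: $\BF_n(-1) = \BC_{n,n}(0) = 1$ by \eqref{eq:recip-val-0}; $\BF_n(0) = \BC_{n,n}(1) = (-1)^n \BN_n$ by \eqref{eq:recip-val-1}; and $\BF_n(1) = \BC_{n,n}(2) = 2^n \BN_n(1/2) = (2 - 2^n)\BN_n$ using \eqref{eq:bern-val-half}, which matches $\BN_n + \tfrac{1}{2}\GN_n$ via \eqref{eq:G-B}. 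There is no real obstacle; the only subtlety is recognizing at the outset that the double sum defining $\BF_n(x)$ collapses, via the binomial theorem, to the clean closed form $\BC_{n,n}(x+1)$, after which every remaining claim is a direct consequence of already-proven properties of $\BN_n$ and $\BC_{n,k}$.
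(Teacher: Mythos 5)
Your proof is correct, but it takes a genuinely different route from the paper. The paper proves the coefficient reflection $\BF_{n,k} = (-1)^n \BF_{n,n-k}$ \emph{first}, by evaluating $\BC_{n,n}^{(k)}(1)$ in two ways: directly from the expansion $\BC_{n,n}(x) = \sum_{\nu}\binom{n}{\nu}\BN_\nu x^\nu$, giving $k!\,\BF_{n,k}$, and via Theorem~\ref{thm:recip-deriv}, giving $(-1)^n k!\,\BF_{n,n-k}$; only afterwards does it deduce $\BF_n(x) = \BC_{n,n}(x+1)$ from the equality of Taylor coefficients $\BF_n^{(k)}(0) = \BC_{n,n}^{(k)}(1)$. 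You invert the logical order: you establish $\BF_n(x) = \BC_{n,n}(x+1)$ at the outset by an elementary swap of the double sum (just the binomial theorem), then obtain the quasi-palindromic functional equation directly from the reflection formula \eqref{eq:bern-refl} applied with $t = 1/(x+1)$, and read off the coefficient relation. What your route buys is self-containedness: it bypasses Theorem~\ref{thm:recip-deriv} entirely, and hence all the machinery behind it (Hoppe's formula, Lah numbers, Leibniz's rule, Chu--Vandermonde), needing only \eqref{eq:bern-poly} and \eqref{eq:bern-refl}. What the paper's route buys is economy within its own framework: it reuses the derivative formula already proven and makes the identity $\BC_{n,n}^{(k)}(1) = k!\,\BF_{n,k}$ explicit, which is the form exploited later (e.g., in Theorem~\ref{thm:coeff-deriv-2}); your argument recovers that identity too, but only implicitly through the polynomial identity. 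One small point worth stating in a final write-up: your manipulations with $x^{-1}$ and $1/(x+1)$ are identities of rational functions valid for $x \neq 0, -1$, and since both sides of $\BF_n(x) = (-1)^n x^n \BF_n(x^{-1})$ are polynomials (as $\deg \BF_n \leq n$), the identity extends to all $x$; this is routine but should be said. The special-value computations at $x = -1, 0, 1$ coincide with the paper's.
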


\begin{proof}
If $k = 0$, then we obtain by~\eqref{eq:bern-poly}
and~\eqref{eq:bern-refl} that
\[
  \BF_{n,0} = \BN_n(1) = (-1)^n \, \BN_n = (-1)^n \, \BF_{n,n}.
\]
For $1 \leq k \leq n$, we evaluate the derivative $\BC_{n,n}^{(k)}(1)$
in two different ways. On the one hand, we have by definition that
\[
  \BC_{n,n}(x) = x^n \, \BN_n(x^{-1})
    = \sum_{\nu = 0}^{n} \binom{n}{\nu} \BN_{\nu} \, x^{\nu}.
\]
Thus, we first infer that
\[
  \BC_{n,n}^{(k)}(1) = k! \sum_{\nu = k}^{n} \binom{n}{\nu}
    \binom{\nu}{k} \BN_{\nu} = k! \, \BF_{n,k}.
\]
On the other hand, Theorem~\ref{thm:recip-deriv} implies that
\[
  \BC_{n,n}^{(k)}(1) = (-1)^n k! \sum_{\nu=0}^{k}
    \binom{n}{\nu} \binom{n-\nu}{n-k} \BN_{n-\nu}
    = (-1)^n k! \, \BF_{n,n-k}.
\]
This shows the reflection relation $\BF_{n,k} = (-1)^n \, \BF_{n,n-k}$,
which also implies that $\BF_n(x)$ is quasipalindromic.
Since $\BC_{n,n}^{(k)}(1) = k! \, \BF_{n,k} = \BF_n^{(k)}(0)$ for
$0 \leq k \leq n$, it follows that $\BC_{n,n}(x+1) = \BF_n(x)$.
It remains to compute the special values of $\BF_n(x)$.
By~\eqref{eq:recip-val-0}, we have $\BF_n(-1) = \BC_{n,n}(0) = 1$.
Further, $\BF_n(0) = \BF_{n,0} = (-1)^n \, \BN_n$. Finally,
using~\eqref{eq:bern-val-half} and~\eqref{eq:G-B}, we infer that
\[
  \BF_n(1) = \BC_{n,n}(2) = 2^n \, \BN_n \mleft( \tfrac{1}{2} \mright)
    = 2^n ( 2^{1-n} - 1 ) \BN_n = \BN_n + \tfrac{1}{2} \GN_n. \qedhere
\]
\end{proof}

\begin{corollary}
For $n \geq 1$, we have
$\BF_{n,0} + \dotsb + \BF_{n,n-1} = \frac{1}{2} \GN_n$.
\end{corollary}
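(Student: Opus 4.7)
The plan is to read off the result directly from Theorem~\ref{thm:BF-reflect}, since all the needed identities have already been established there. The idea is simply to evaluate the polynomial $\BF_n(x)$ at $x=1$, where its value is given, and separate off the top coefficient.

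First, by the definition $\BF_n(x) = \sum_{k=0}^{n} \BF_{n,k} \, x^k$, evaluation at $x=1$ gives
\[
\BF_n(1) = \sum_{k=0}^{n} \BF_{n,k} = \sum_{k=0}^{n-1} \BF_{n,k} + \BF_{n,n}.
\]
Next, I would invoke the two special values from Theorem~\ref{thm:BF-reflect}, namely $\BF_{n,n} = \BN_n$ (the top coefficient of $\BF_n$, which coincides with $\BN_n$ via the connection $\BF_n(x) = \BC_{n,n}(x+1)$ and $\BC_{n,n}(1) = (-1)^n \BN_n$ considered at $x=0$, although $\BF_{n,n} = \BN_n$ is stated outright) together with $\BF_n(1) = \BN_n + \tfrac{1}{2}\GN_n$.

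Substituting both values into the display above yields
\[
\sum_{k=0}^{n-1} \BF_{n,k} = \BF_n(1) - \BF_{n,n} = \BN_n + \tfrac{1}{2}\GN_n - \BN_n = \tfrac{1}{2}\GN_n,
\]
as required. There is essentially no obstacle here: the entire content is packaged in Theorem~\ref{thm:BF-reflect}, and this corollary is just the statement obtained by peeling off the palindromic pair at the top of the coefficient sequence. The only thing worth noting is that for odd $n \geq 3$ the right-hand side vanishes, which is consistent with the reflection $\BF_{n,k} = -\BF_{n,n-k}$ forcing $\sum_{k=0}^{n} \BF_{n,k} = 0$ in that case (and then $\BF_{n,n} = \BN_n = 0$ as well, except for $n=1$, which is easy to verify separately).
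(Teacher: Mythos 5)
Your proof is correct and is exactly the argument the paper intends: the corollary is stated immediately after Theorem~\ref{thm:BF-reflect} precisely because it follows by subtracting $\BF_{n,n} = \BN_n$ from $\BF_n(1) = \BN_n + \tfrac{1}{2}\GN_n$. Your consistency check for odd $n \geq 3$ is a nice (if unnecessary) sanity check and is also correct.
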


Finally, to generalize the results we define the formal power series
\begin{equation} \label{eq:BF-poly}
  \BF_{n,k}(x) = \BF_n(x) \, (x+1)^{k-n}
    = \BF_n(x) \sum_{\nu \geq 0} \binom{k-n}{\nu} x^\nu
    \quad (n \geq 0, \, k \in \ZZ).
\end{equation}

\begin{theorem} \label{thm:BF-deriv}
Let $n \geq 0$ and $k \in \ZZ$. Then we have
\begin{equation} \label{eq:recip-BF}
  \BC_{n,k}(x+1) = \BF_{n,k}(x),
\end{equation}
where we require $\norm{x} < 1$ in the case $k < n$.
In particular, we have for $n, k \geq \ell \geq 0$ that
\begin{equation} \label{eq:recip-BF-2}
  \BC_{n,k}^{(\ell)}(1)
    = \ell! \sum_{\nu=0}^{\ell} \binom{k-n}{\ell-\nu} \BF_{n,\nu}.
\end{equation}
\end{theorem}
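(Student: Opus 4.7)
The plan is to reduce Theorem~\ref{thm:BF-deriv} to the relation $\BF_n(x) = \BC_{n,n}(x+1)$ already established in Theorem~\ref{thm:BF-reflect}, by exploiting the simple scaling behaviour of $\BC_{n,k}$ in its second index.

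First I would prove \eqref{eq:recip-BF} by a direct algebraic identity. Starting from the definition $\BC_{n,k}(y) = y^{k}\,\BN_n(y^{-1})$, one immediately reads off that
\[
  \BC_{n,k}(y) = y^{k-n} \cdot y^{n}\BN_n(y^{-1}) = y^{k-n}\,\BC_{n,n}(y)
    \quad (y \neq 0).
\]
Substituting $y = x+1$ and invoking Theorem~\ref{thm:BF-reflect} gives $\BC_{n,k}(x+1) = (x+1)^{k-n}\BF_n(x)$, which is exactly the definition \eqref{eq:BF-poly} of $\BF_{n,k}(x)$. If $k \geq n$, then $(x+1)^{k-n}$ is a polynomial and the identity holds as polynomials in $x$; if $k < n$, one expands $(x+1)^{k-n}$ as a binomial series around $x=0$ with radius of convergence $1$, matching the hypothesis $\norm{x} < 1$.

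Next I would derive \eqref{eq:recip-BF-2} from the already proven \eqref{eq:recip-BF} by differentiating the product $\BF_{n,k}(x) = \BF_n(x) \cdot (x+1)^{k-n}$ via Leibniz's rule and evaluating at $x=0$. Since $\BC_{n,k}^{(\ell)}(1)$ equals the $\ell$th derivative of the shifted function $x \mapsto \BC_{n,k}(x+1) = \BF_{n,k}(x)$ at $x=0$, we obtain
\[
  \BC_{n,k}^{(\ell)}(1) = \sum_{j=0}^{\ell} \binom{\ell}{j}\, \BF_n^{(j)}(0)\,
    \Bigl[(k-n)_{\ell-j}\,(x+1)^{k-n-(\ell-j)}\Bigr]_{x=0}.
\]
Because $\BF_n(x) = \sum_{k} \BF_{n,k}\,x^k$, we have $\BF_n^{(j)}(0) = j!\,\BF_{n,j}$, and the bracketed factor at $x=0$ equals $(k-n)_{\ell-j}$. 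Combining $\binom{\ell}{j}\,j! = \ell!/(\ell-j)!$ with $(k-n)_{\ell-j}/(\ell-j)! = \binom{k-n}{\ell-j}$ folds the prefactor into $\ell!\binom{k-n}{\ell-j}$, so that
\[
  \BC_{n,k}^{(\ell)}(1) = \ell! \sum_{j=0}^{\ell} \binom{k-n}{\ell-j}\,\BF_{n,j},
\]
which is \eqref{eq:recip-BF-2} after relabeling $j \mapsto \nu$.

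I do not expect a serious obstacle: the theorem is essentially a one-line consequence of Theorem~\ref{thm:BF-reflect} together with the homogeneity $\BC_{n,k} = y^{k-n}\,\BC_{n,n}$. The only point that warrants care is the distinction between the polynomial and formal/convergent power-series regimes (the $k<n$ case), which is handled precisely by the condition $\norm{x}<1$ stated in the theorem. The derivative identity then follows by a routine Leibniz computation, with no combinatorial identity deeper than $\binom{\ell}{j}j!(\ell-j)! = \ell!$ required.
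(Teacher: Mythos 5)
Your proposal is correct and takes essentially the same route as the paper: it reduces \eqref{eq:recip-BF} to the relation $\BC_{n,n}(x+1) = \BF_n(x)$ of Theorem~\ref{thm:BF-reflect} combined with the definition \eqref{eq:BF-poly} (imposing $\norm{x} < 1$ for convergence of the binomial series when $k < n$), and then obtains \eqref{eq:recip-BF-2} by applying Leibniz's rule to the product $\BF_n(x)\,(x+1)^{k-n}$ and evaluating at $x=0$. The only difference is cosmetic: you spell out the homogeneity $\BC_{n,k}(y) = y^{k-n}\,\BC_{n,n}(y)$ and the final coefficient bookkeeping $\binom{\ell}{j}\,j!\,(k-n)_{\ell-j} = \ell!\binom{k-n}{\ell-j}$, which the paper compresses into ``this easily yields the result.''
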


\begin{proof}
Since $\BC_{n,n}(x+1) = \BF_n(x)$ by Theorem~\ref{thm:BF-reflect},
we obtain \eqref{eq:recip-BF} by its definition and using the substitution
in \eqref{eq:BF-poly}. We have to consider the expansion
$(x+1)^\alpha = \sum_{\nu \geq 0} \binom{\alpha}{\nu} x^\nu$ with $\alpha = k-n$.
If $\alpha < 0$ so $k < n$, then we have to require $\norm{x} < 1$ for
convergence. To show~\eqref{eq:recip-BF-2}, we apply Leibniz's rule to get
\[
  \dop_x^\ell \, \BC_{n,k}(x+1) = \sum_{\nu=0}^{\ell} \binom{\ell}{\nu}
    \big( \BF_n(x) \big)^{(\nu)} \big( (x+1)^{\alpha} \big)^{(\ell-\nu)}.
\]
Evaluating this equation at $x=0$ easily yields the result.
\end{proof}

As an application, we derive the following special case.

\begin{proof}[Proof of Theorem~\ref{thm:coeff-deriv-2}]
The computation of $\BC_{n,2k}^{(k)}(1)$ follows from Theorem~\ref{thm:BF-deriv}
and \eqref{eq:recip-BF-2} with $\ell = k$ and $k$ replaced by $2k$.
\end{proof}

Let $\coeff{t^n}$ be the linear functional that gives the coefficient of $t^n$
of a formal power series such that
\[
  f(t) = \sum_{n \geq 0} a_n t^n, \quad \coeff{t^n} \, f(t) = a_n.
\]

\begin{theorem} \label{thm:BF-poly}
If $k \geq n \geq 0$, then $\BF_{n,k}(x)$ is a polynomial.
Moreover, it is quasipalin\-dromic such that
\begin{equation} \label{eq:BF-poly-recip}
  \BF_{n,k}(x) = (-1)^n x^k \, \BF_{n,k}(x^{-1}).
\end{equation}
For odd $n \geq 3$ and $k \geq 0$, we have
\begin{equation} \label{eq:FP-BF-coeff}
  \ff_{n,k} = (-1)^{k+1} \frac{2^{k+2}}{k+2} \, \coeff{x^{k+1}} \, \BF_{n,2(k+1)}(x).
\end{equation}
In particular, the central coefficient of $\BF_{n,2k}(x)$ vanishes for
$k \geq (n+1)/2$ as well as the coefficient~$\ff_{n,k}$ of the Faulhaber
polynomial $F_n$ for $k \geq \dd_n + 1$.
\end{theorem}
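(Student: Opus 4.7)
My plan is to derive the four claims in sequence, each leveraging the factorization $\BF_{n,k}(x) = \BF_n(x)\,(x+1)^{k-n}$ from \eqref{eq:BF-poly} together with the symmetries and derivative formulas already established for $\BC_{n,k}$ and $\BF_n$. The polynomial claim is immediate: when $k \geq n$, the factor $(x+1)^{k-n}$ is a genuine polynomial of degree $k-n$, so its product with the degree-$n$ polynomial $\BF_n(x)$ is a polynomial of degree at most $k$. For the quasi-palindromic relation \eqref{eq:BF-poly-recip}, I compute
\[
  x^k\,\BF_{n,k}(x^{-1}) = x^k\,\BF_n(x^{-1})\,(x^{-1}+1)^{k-n} = x^n\,\BF_n(x^{-1})\,(x+1)^{k-n},
\]
using $(x^{-1}+1)^{k-n} = x^{n-k}(x+1)^{k-n}$, and then apply $\BF_n(x) = (-1)^n x^n\,\BF_n(x^{-1})$ from Theorem~\ref{thm:BF-reflect} to collapse the right-hand side to $(-1)^n \BF_{n,k}(x)$.

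For the coefficient formula \eqref{eq:FP-BF-coeff}, I expand $\BF_{n,2(k+1)}(x)$ as a Cauchy product and read off
\[
  \coeff{x^{k+1}}\,\BF_{n,2(k+1)}(x) = \sum_{\nu=0}^{k+1}\binom{2(k+1)-n}{k+1-\nu}\,\BF_{n,\nu}.
\]
By Theorem~\ref{thm:BF-deriv} and equation~\eqref{eq:recip-BF-2} applied with $k \mapsto 2(k+1)$ and $\ell \mapsto k+1$, this sum equals $\BC_{n,2(k+1)}^{(k+1)}(1)/(k+1)! = \bb_{n,k+1}/(k+1)!$. Substituting into Corollary~\ref{cor:coeff-deriv}'s formula $\ff_{n,k} = (-1)^{k+1}\tfrac{2^{k+2}}{(k+2)!}\,\bb_{n,k+1}$ and simplifying $(k+1)!/(k+2)! = 1/(k+2)$ yields exactly \eqref{eq:FP-BF-coeff}. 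The boundary case $k \geq n$ lies outside the hypothesis of the auxiliary theorems, but there $\ff_{n,k} = 0$ by degree and the central coefficient vanishes by the quasi-palindromic identity just proved, so the formula holds trivially.

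The remaining vanishing assertions follow from \eqref{eq:BF-poly-recip} with $k$ replaced by $2k$ and $n$ odd: writing $\BF_{n,2k}(x) = \sum_j a_j x^j$, the identity forces $a_j = -a_{2k-j}$, so the central index $j = k$ gives $a_k = 0$; this requires $2k \geq n$, which for odd $n$ is equivalent to $k \geq (n+1)/2$. The companion claim that $\ff_{n,k} = 0$ for $k \geq \dd_n + 1 = (n-1)/2$ then follows from \eqref{eq:FP-BF-coeff}, since the hypothesis forces $k+1 \geq (n+1)/2$ and triggers the central-coefficient vanishing just established. I do not expect any real obstacle; the proof is essentially careful bookkeeping that assembles Theorem~\ref{thm:BF-reflect}, Theorem~\ref{thm:BF-deriv}, and Corollary~\ref{cor:coeff-deriv}, with the only mild subtlety being the binomial--factorial simplification in the middle step.
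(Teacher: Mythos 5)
Your proof is correct and takes essentially the same approach as the paper: the factorization $\BF_{n,k}(x) = \BF_n(x)\,(x+1)^{k-n}$ together with Theorem~\ref{thm:BF-reflect} for the polynomial and quasi-palindromic claims, the identification of $\coeff{x^{k+1}} \BF_{n,2(k+1)}(x)$ with $\frac{1}{(k+1)!}\BC_{n,2(k+1)}^{(k+1)}(1)$ combined with Corollary~\ref{cor:coeff-deriv} to get \eqref{eq:FP-BF-coeff}, and the anti-palindromic symmetry for the vanishing statements. Your detour through the Cauchy product and \eqref{eq:recip-BF-2} is merely an expanded version of the paper's direct appeal to $\BF_{n,2k}(x) = \BC_{n,2k}(x+1)$, and your handling of the range $k \geq n$ (both sides vanish trivially) matches the paper's closing remark that the relation holds for all $k \geq 0$.
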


\begin{proof}
First, let $k \geq n \geq 0$. Then we have by \eqref{eq:BF-poly} that
$\BF_{n,k}(x) = \BF_n(x) (x+1)^\alpha$, with $\alpha = k-n \geq 0$, is a polynomial.
The expansion $(x+1)^\alpha = \sum_{\nu \geq 0} \binom{\alpha}{\nu} x^\nu$ is
palindromic due to the symmetry of the binomial coefficients.
It is well known that the product
\[
  \BF_n(x) (x+1)^\alpha = (-1)^n x^n \, \BF_n(x^{-1}) \, x^\alpha (x^{-1}+1)^\alpha
\]
is also quasipalindromic. Together with Theorem~\ref{thm:BF-reflect} this shows
\eqref{eq:BF-poly-recip}.

Now let $n \geq 3$ be odd. As a result, $\BF_{n,2k}(x)$ is an antipalindromic
polynomial for $k \geq (n+1)/2$. In this case, we have
$\coeff{x^k} \, \BF_{n,2k}(x) = - \coeff{x^k} \, \BF_{n,2k}(x)$ by \eqref{eq:BF-poly-recip},
implying that the central coefficient of $\BF_{n,2k}(x)$ vanishes.
For $k \geq 0$, we have by definition that
\[
  \coeff{x^k} \, \BF_{n,2k}(x) = \frac{1}{k!} \, \BC_{n,2k}^{(k)}(1).
\]
From Corollary~\ref{cor:coeff-deriv}, we then deduce \eqref{eq:FP-BF-coeff}
for ${0 \leq k < n}$. Furthermore, $\ff_{n,k} = 0$ for $k \geq \dd_n + 1 = (n-1)/2$
by definition, which coincides with $\coeff{x^{k+1}} \, \BF_{n,2(k+1)}(x) = 0$.
Hence, \eqref{eq:FP-BF-coeff} holds for all $k \geq 0$, completing the proof.
\end{proof}

See Example~\ref{exp:BF-poly} for the computation of the coefficients
$\ff_{n,k}$ for ${n=7}$ by means of Theorem~\ref{thm:BF-poly}. We also derive the
following interpretation: Since the central coefficient of $\BF_{n,2k}(x)$
vanishes by symmetry for odd $n \geq 1$ and $k \geq (n+1)/2$, we then have
\[
  \coeff{x^k} \, \BF_{n,2k}(x) = \frac{1}{k!} \, \BC_{n,2k}^{(k)}(1) = 0.
\]
Consequently, the recurrences of Corollaries~\ref{cor:bern-recurr} and~\ref{cor:bern-recurr-2}
are induced by this symmetry property, giving another point of view.

Regarding Theorem~\ref{thm:BF-poly} and formula~\eqref{eq:FP-BF-coeff},
we can go further to show that the coefficients of the Faulhaber polynomials
have an explanation in a \textsl{natural} context using mainly the coefficients
$\coeff{x^k} \, \BF_{n,2k}(x)$, which we interpret as
\textdef{central coefficients} for all $k \geq 0$.

Recall Lemma~\ref{lem:bern-refl}, which implies that for odd $n \geq 3$ the
polynomials $\BN_n(x)/(x - \frac{1}{2})$ are symmetric around $x = \frac{1}{2}$.
Further recall Theorem~\ref{thm:Faulhaber-Jacobi} and Lemma~\ref{lem:S12-ident}
for a reformulation of Faulhaber's equations. Both cases for odd and even
indices can be described in one combined formula such that only the above
polynomials $\BN_n(x)/(x - \frac{1}{2})$ with odd indices $n \geq 3$ appear.

So far, the factor $x - \frac{1}{2}$ has not been interpreted in a suitable form.
It is a striking coincidence that we have a substitution with a quadratic term,
e.g., ${y = \binom{x}{2}}$, whose derivative $y' = x - \frac{1}{2}$ matches the
needed factor. In fact, this connection has its simple explanation and
application by derivation and integration, see \eqref{eq:BN-FP-sum}
and \eqref{eq:SN-FP-sum} below. The following theorem and its corollary
reassemble the results.

\begin{theorem} \label{thm:BN-FP-poly}
Let $n \geq 3$ be odd. Set $y = \binom{x}{2}$ and $u = x(x-1)$.
Regard derivatives with respect to $x$. Then Faulhaber's equations
\[
  S_n(x) = y^2 F_n(y) \andq
    S_{n-1}(x) = \tfrac{1}{3} (y^2)' \, F_{n-1}(y)
\]
are each equivalent to the relations
\begin{equation} \label{eq:BN-FP}
  \frac{1}{2} \frac{\BN_n(x)}{x - \frac{1}{2}} = \FP_n(u) \andq
    \BN_n(x) = u' \, \FP_n(u),
\end{equation}
where
\[
  \FP_n(u) = \sum_{k \geq 0} (-1)^k \fp_{n,k} \, u^k \andq
    \fp_{n,k} = \coeff{x^k} \, \BF_{n,2k}(x)
    = \frac{1}{k!} \, \BC_{n,2k}^{(k)}(1) \quad (k \geq 0).
\]
Moreover, we have for $0 \leq k \leq n$ the expressions
\[
  \fp_{n,k} = \sum_{\nu=0}^{k} \binom{2k-n}{k-\nu} \BF_{n,\nu}
    = - \sum_{\nu=0}^{k} \binom{2k-\nu}{k} \binom{n}{\nu} \BN_{n-\nu}.
\]
In particular, $\fp_{n,k} = 0$ for $k=0$ and $k \geq (n+1)/2$;
otherwise, $(-1)^{(n-1)/2} \fp_{n,k} > 0$, so $\deg \FP_n = (n-1)/2$
and $\FP_n(0) = 0$.
\end{theorem}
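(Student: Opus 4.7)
The plan has four steps: (i) show that $\tfrac{1}{2}\BN_n(x)/(x-\tfrac{1}{2})$ is a genuine polynomial in $u$, so that $\FP_n$ is well-defined; (ii) derive \eqref{eq:BN-FP} from $S_n=y^2F_n$ by differentiation and invert by integration; (iii) identify $\fp_{n,k}$ with $\BC_{n,2k}^{(k)}(1)/k!$ via Corollary~\ref{cor:coeff-deriv}; (iv) read off the two closed-form expressions, the vanishing, the signs, and the degree from the earlier theorems. For (i), Lemma~\ref{lem:bern-refl} expresses $\BN_n(x)$ as a sum over even $\nu$, so for odd $n$ every exponent $n-\nu$ of $x-\tfrac{1}{2}$ is odd; factoring out $x-\tfrac{1}{2}$ therefore leaves a polynomial in $(x-\tfrac{1}{2})^2 = u+\tfrac{1}{4}$, hence in $u$. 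Since $u'=2(x-\tfrac{1}{2})$, the two displays in \eqref{eq:BN-FP} are manifestly equivalent.

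For (ii), differentiating $S_n(x)=y^2F_n(y)$ and using $S_n'=\BN_n(x)$ (because $\BN_n=0$ for odd $n\geq 3$) yields by the chain rule
\[
  \BN_n(x) = y'\bigl(2yF_n(y)+y^2F_n'(y)\bigr) = 2yy'\,\theta_{y,\tfrac{1}{2}}F_n(y).
\]
Theorem~\ref{thm:FP-recurr-1} replaces $\theta_{y,\tfrac{1}{2}}F_n(y)$ by $(n/3)F_{n-1}(y)$, which after writing $y=u/2$ gives the compact expression $\FP_n(u)=(nu/6)\,F_{n-1}(u/2)$. Conversely, since $u(0)=0$ and $\fp_{n,0}=0$ (verified below), integrating $\BN_n=u'\FP_n(u)$ from $0$ to $x$ gives $S_n(x)=\int_0^u\FP_n(v)\,dv$, which is of the form $y^2F_n(y)$ and thereby reproduces Faulhaber's first equation. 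The equivalence with the second Faulhaber equation then follows automatically from Corollary~\ref{cor:FP-odd-even}.

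For (iii), substituting the expression $\ff_{n-1,k-1}=(-1)^k\frac{3}{n}\frac{2^k}{k!}\,\bb_{n,k}$ from Corollary~\ref{cor:coeff-deriv} into $\FP_n(u)=(nu/6)F_{n-1}(u/2)$ and simplifying, the factors $n$, $3$, and all powers of $2$ cancel cleanly, leaving $\coeff{u^k}\FP_n(u)=(-1)^k\bb_{n,k}/k!$ for $k\geq 1$. Hence $\fp_{n,k}=\bb_{n,k}/k!=\BC_{n,2k}^{(k)}(1)/k!$, while for $k=0$ we have $\fp_{n,0}=\BC_{n,0}(1)=\BN_n(1)=(-1)^n\BN_n=0$. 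The identification $\BC_{n,2k}(x+1)=\BF_{n,2k}(x)$ of Theorem~\ref{thm:BF-deriv} then translates the derivative into the central coefficient $\coeff{x^k}\BF_{n,2k}(x)$, completing the chain of equalities in the statement.

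For (iv), the first closed-form sum for $\fp_{n,k}$ is Theorem~\ref{thm:coeff-deriv-2} and the second is \eqref{eq:recip-spec} of Theorem~\ref{thm:coeff-deriv} (the minus sign coming from $(-1)^n=-1$). The vanishing $\fp_{n,k}=0$ for $k\geq(n+1)/2$ is precisely the anti-palindromy of $\BF_{n,2k}(x)$ in Theorem~\ref{thm:BF-poly}, which forces the central coefficient to be zero. The strict positivity $(-1)^{(n-1)/2}\fp_{n,k}>0$ for $1\leq k\leq(n-1)/2$ follows from $(-1)^{\dd_n+1}\fh_{n,k-1}>0$ in Theorem~\ref{thm:coeff-deriv}, via $\fh_{n,k-1}=\bb_{n,k}=k!\,\fp_{n,k}$ and $\dd_n+1=(n-1)/2$; hence $\deg\FP_n=(n-1)/2$ and $\FP_n(0)=0$. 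The main obstacle will be purely bookkeeping — keeping the four indexing conventions $\ff$, $\fh$, $\bb$, $\fp$ straight and ensuring that the $2^k$ and sign factors arising from the substitution $y=u/2$ cancel cleanly against those in Corollary~\ref{cor:coeff-deriv} — since every analytic ingredient (chain rule, the $\theta_{y,\tfrac{1}{2}}$-recurrence, anti-palindromy, and the derivative identity for $\BC_{n,2k}$) is already packaged in the theorems cited.
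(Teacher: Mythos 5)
Your proof is correct and follows essentially the same strategy as the paper's: differentiate Faulhaber's first equation, identify the resulting coefficients with $\frac{1}{k!}\,\BC_{n,2k}^{(k)}(1)$ via the previously established coefficient dictionary, handle the second equation by Corollary~\ref{cor:FP-odd-even}, recover the converse by integration using $u(0)=0$ and $\fp_{n,0}=0$, and cite Theorems~\ref{thm:coeff-deriv}, \ref{thm:coeff-deriv-2}, and~\ref{thm:BF-poly} for the closed forms, vanishing, and signs. The only (harmless) variation is that you route the differentiated identity through $F_{n-1}$ via Theorem~\ref{thm:FP-recurr-1}, obtaining the intermediate formula $\FP_n(u) = \tfrac{nu}{6}\,F_{n-1}(u/2)$ and then applying Corollary~\ref{cor:coeff-deriv}, whereas the paper differentiates $\sum_{k} \ff_{n,k}\, y^{k+2}$ term by term and invokes \eqref{eq:FP-BF-coeff} directly --- two equivalent passes through the same dictionary, since \eqref{eq:FP-BF-coeff} is itself deduced from Corollary~\ref{cor:coeff-deriv}.
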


\begin{proof}
Let $n \geq 3$ be odd. Note that $y = \frac{1}{2} u$,
$y' = x - \frac{1}{2}$, and $u' = 2x - 1$.
Thus, both relations in \eqref{eq:BN-FP} are equivalent.
We consider the first equation $S_n(x) = y^2 F_n(y)$.
By definition we have
\[
  S_n(x) = \frac{1}{n+1}( \BN_{n+1}(x) - \BN_{n+1} )
    = \sum_{k \geq 0} \ff_{n,k} \, y^{k+2}.
\]
Differentiating with respect to $x$ and using \eqref{eq:FP-BF-coeff},
this yields the equations
\begin{equation} \label{eq:BN-FP-sum}
  \frac{\BN_n(x)}{x - \frac{1}{2}}
    = \sum_{k \geq 0} (k+2) \ff_{n,k} \, y^{k+1}
    = 2 \sum_{k \geq 1} (-1)^k \fp_{n,k} \, u^k,
\end{equation}
where $\fp_{n,k} = \coeff{x^k} \, \BF_{n,2k}(x)$ for $k \geq 0$.
Since $n \geq 3$ is odd, we have $\BN_n = 0$, and therefore,
$\BF_n(0) = \BF_{n,0} = 0$ by Theorem~\ref{thm:BF-reflect}.
It then follows that $\BF_{n,0}(0) = 0$, so $\fp_{n,0} = 0$.
Including the case $k=0$ implies \eqref{eq:BN-FP}.
The second Faulhaber equation provides
\[
  S_{n-1}(x) = \frac{1}{n}( \BN_n(x) - \BN_n )
    = \frac{2}{3} \mleft( x - \frac{1}{2} \mright)
    \sum_{k \geq 0} \ff_{n-1,k} \, y^{k+1}.
\]
By using Corollary~\ref{cor:FP-odd-even}, the above equation then transforms
into \eqref{eq:BN-FP-sum}, and we are done. Conversely, starting from
\eqref{eq:BN-FP}, we arrive at the equations with $S_{n-1}(x)$ and $S_n(x)$.
For the latter, note that
\begin{equation} \label{eq:SN-FP-sum}
  S_n(x) = \int_{0}^{x} \BN_n(t) dt
    = \int_{0}^{u} \FP_n(s) ds
    = \sum_{k \geq 0} (-1)^k \fp_{n,k} \frac{u^{k+1}}{k+1}.
\end{equation}
The remaining parts easily follow from Theorems~\ref{thm:coeff-deriv},
\ref{thm:coeff-deriv-2}, and~\ref{thm:BF-poly}.
\end{proof}

See Table~\ref{tbl:FP-poly-odd} for the first few related Faulhaber polynomials
$\FP_n(u)$. Example~\ref{exp:BF-poly} also contains the computation of
$\FP_n(u)$ for $n=7$, compared to $F_n(y)$, by Theorem~\ref{thm:BN-FP-poly}.
For odd indices $n \geq 3$, the relations of \eqref{eq:BN-FP} can be explained
by reciprocal Bernoulli polynomials. Since \eqref{eq:BN-FP} is
also equivalent to $n \, S_{n-1}(x) / (x - \frac{1}{2}) = 2 \FP_n(u)$,
the polynomial parts in terms of $\xi$ and $u$ of Tables~\ref{tbl:S-poly-even}
and~\ref{tbl:FP-poly-odd}, respectively, are equivalent, only differing by a
factor of~$2$ and alternating signs.
By Theorems~\ref{thm:BF-poly} and~\ref{thm:BN-FP-poly}, the properties of the
coefficients $\fp_{n,k}$ and their relationship with $\BF_{n,2k}(x)$ and
$\BC_{n,2k}^{(k)}(1)$ can be characterized in the table below.

\setcounter{table}{\value{theorem}}

\begin{table}[H] \small
\setstretch{1.5}
\setlength{\tabcolsep}{6pt}
\begin{center}
\begin{tabular}{cccc}
  \toprule
  \multicolumn{1}{c}{Index $k$} & \multicolumn{1}{c}{Coefficients}
    & \multicolumn{1}{c}{$\BF_{n,2k}(x) \in$}
    & \multicolumn{1}{c}{Properties} \\\hline
  $0$ & $\fp_{n,k} = 0$
    & $\QQ[[x]]$ & $\BF_{n,2k}(0) = 0$ \\
  $1, \ldots, \frac{n-1}{2}$ & $(-1)^{\frac{n-1}{2}} \fp_{n,k} > 0$
    & $\QQ[[x]]$ & $(-1)^{\frac{n-1}{2}} \, \coeff{x^k} \, \BF_{n,2k}(x) > 0$ \\
  $\frac{n+1}{2}, \ldots, \infty$ & $\fp_{n,k} = 0$
    & $\QQ[x]$ & $\BF_{n,2k}(x) \text{ is antipalindromic}$ \\
  \bottomrule
\end{tabular}

\caption{Coefficients $\fp_{n,k}$ for odd $n \geq 3$.}
\end{center}
\end{table}

\setcounter{theorem}{\value{table}}

A further variant of the Faulhaber equations can be given as follows.

\begin{corollary}
Let $n \geq 3$ be odd. Set $y = \binom{x}{2}$ and regard derivatives with
respect to~$x$. Then Faulhaber's equations are equivalent to
\[
  S_n(x) = \FF_n(y) \andq S_{n-1}(x) = y' \, \FF_{n-1}(y)
\]
with the polynomials $\FF_n(y) = y^2 F_n(y)$ and
$\FF_{n-1}(y) = \frac{2}{3} y F_{n-1}(y)$, which obey the Appell property
$\FF'_n(y) = n \FF_{n-1}(y)$ regarding the derivative with respect to~$y$.
\end{corollary}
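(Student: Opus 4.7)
The plan is in two parts. First, verify the equivalence of the two formulations by direct substitution of the defining expressions for $\FF_n$ and $\FF_{n-1}$. Second, derive the Appell property $\FF'_n(y) = n\FF_{n-1}(y)$ from the classical identity $S'_n(x) = n S_{n-1}(x)$, which holds for odd $n \geq 3$ since $\BN_n = 0$, via the chain rule applied to $y = \binom{x}{2}$.

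For the equivalence, I would substitute $\FF_n(y) = y^2 F_n(y)$ and $\FF_{n-1}(y) = \tfrac{2}{3} y F_{n-1}(y)$ into the two stated equations. The odd-index identity reduces immediately to $S_n(x) = y^2 F_n(y)$, which is Faulhaber's equation. For the even index $n-1$, Lemma~\ref{lem:S12-ident} gives $(y^2)' = 2yy'$, so
\[
y' \FF_{n-1}(y) = \tfrac{2}{3} y y' F_{n-1}(y) = \tfrac{1}{3} (y^2)' F_{n-1}(y) = S_2(x) F_{n-1}(y),
\]
matching Theorem~\ref{thm:Faulhaber-Jacobi} in the even case. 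Both implications are algebraic rewrites.

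For the Appell property, differentiate $S_n(x) = \FF_n(y)$ with respect to $x$: the chain rule gives $S'_n(x) = \FF'_n(y)\, y'$, where the prime on $\FF_n$ denotes differentiation with respect to $y$. By \eqref{eq:sum-deriv}, $S'_n(x) = n S_{n-1}(x) + \BN_n$, and since $n \geq 3$ is odd one has $\BN_n = 0$, so $S'_n(x) = n S_{n-1}(x) = n y' \FF_{n-1}(y)$ via the reformulated second equation. Equating and cancelling the nonzero polynomial $y' = x - \tfrac{1}{2}$ yields $\FF'_n(y) = n \FF_{n-1}(y)$ as a polynomial identity in $x$. Since $y = \binom{x}{2}$ takes infinitely many values as $x$ varies, this descends to the required identity in the indeterminate $y$.

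There is no substantial obstacle; the entire argument is a short manipulation. The only mild subtlety is the descent from an identity in $x$ to a polynomial identity in $y$, which is routine. As a consistency check, Theorem~\ref{thm:FP-recurr-1} gives $n F_{n-1}(y) = 3 F_n(y) + \tfrac{3}{2} y F'_n(y)$; multiplying by $\tfrac{2}{3} y$ yields $\tfrac{2n}{3} y F_{n-1}(y) = 2 y F_n(y) + y^2 F'_n(y)$, which is precisely $n \FF_{n-1}(y) = \FF'_n(y)$. This confirms that the Appell relation is essentially a repackaging of the odd-index recurrence already established.
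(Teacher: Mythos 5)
Your proof is correct and follows essentially the route the paper intends: the paper states this corollary without an explicit proof as an immediate consequence of its preceding results, and your verification uses precisely the paper's own ingredients — Lemma~\ref{lem:S12-ident} for the algebraic equivalence with Theorem~\ref{thm:Faulhaber-Jacobi}, and the derivative relation \eqref{eq:sum-deriv} with $\BN_n = 0$ (equivalently Theorem~\ref{thm:FP-recurr-1}, as your consistency check confirms) for the Appell property. The descent from an identity in $x$ to one in $y$ is handled correctly, and nothing further is needed.
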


This is a modern reformulation of the equations \eqref{eq:S-odd-even}
from the beginning that were known to Faulhaber, closing the circle.

At the end, we note that Theorem~\ref{thm:BF-reflect} also implies the
following corollary. Since
$\binom{n}{\nu} \binom{\nu}{k} = \binom{n}{k} \binom{n-k}{\nu-k}$,
one can write
\[
  \BF_{n,k} = \binom{n}{k} \sum_{\nu = k}^{n} \binom{n-k}{\nu-k} \BN_\nu
    = \binom{n}{k} \sum_{\nu = 0}^{n-k} \binom{n-k}{\nu} \BN_{k+\nu}.
\]
By changing the indexing and variables with $r = k$ and $s = n-k$,
the reflection relation of~$\BF_{n,k}$ leads to a well-known result
(see Lucas~\cite[Sec.\,135, p.\,240]{Lucas:1891} (1891),
Tits~\cite[p.\,191]{Tits:1922} (1922), and
Gessel~\cite[Lem.\,7.2, p.\,416]{Gessel:2003} (2003) for short proofs
using umbral calculus).

\begin{corollary}
For $r, s \geq 0$, we have the reciprocity relation
\[
  (-1)^r \sum_{\nu=0}^{r} \binom{r}{\nu} \BN_{s+\nu}
    = (-1)^s \sum_{\nu=0}^{s} \binom{s}{\nu} \BN_{r+\nu}.
\]
\end{corollary}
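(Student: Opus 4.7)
The plan is to read the corollary off directly from the reflection relation $\BF_{n,k} = (-1)^n \BF_{n,n-k}$ established in Theorem~\ref{thm:BF-reflect}, after rewriting $\BF_{n,k}$ in the more convenient form given just above the corollary. The substitution $\binom{n}{\nu}\binom{\nu}{k} = \binom{n}{k}\binom{n-k}{\nu-k}$ converts the defining sum to
\[
  \BF_{n,k} = \binom{n}{k} \sum_{\nu=0}^{n-k} \binom{n-k}{\nu} \BN_{k+\nu},
\]
as already noted in the text. The whole argument then reduces to a change of variables.

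Next I would set $r = k$ and $s = n-k$, so that $n = r+s$ and $n-k = s$. In this parametrization the rewritten formula becomes
\[
  \BF_{r+s,\,r} = \binom{r+s}{r} \sum_{\nu=0}^{s} \binom{s}{\nu} \BN_{r+\nu},
\]
while applying it to the reflected index $n-k = s$ gives
\[
  \BF_{r+s,\,s} = \binom{r+s}{s} \sum_{\nu=0}^{r} \binom{r}{\nu} \BN_{s+\nu}.
\]
Since $\binom{r+s}{r} = \binom{r+s}{s}$, the reflection relation $\BF_{r+s,r} = (-1)^{r+s}\BF_{r+s,s}$ collapses to
\[
  \sum_{\nu=0}^{s} \binom{s}{\nu} \BN_{r+\nu}
    = (-1)^{r+s} \sum_{\nu=0}^{r} \binom{r}{\nu} \BN_{s+\nu}.
\]
Multiplying both sides by $(-1)^s$ and using $(-1)^{r+2s} = (-1)^r$ yields the stated identity.

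There is no genuine obstacle here: the single nontrivial ingredient is the reflection $\BF_{n,k} = (-1)^n\BF_{n,n-k}$, which has already been proved by evaluating $\BC_{n,n}^{(k)}(1)$ in two different ways, once directly from $\BC_{n,n}(x) = \sum \binom{n}{\nu}\BN_\nu x^\nu$ and once via Theorem~\ref{thm:recip-deriv}. The only care needed is bookkeeping of the sign: checking that $(-1)^{r+s}$ is correctly absorbed into $(-1)^r$ and $(-1)^s$ on the two sides, and verifying that the limits of summation match after the substitution. Given the symmetry $\binom{r+s}{r} = \binom{r+s}{s}$, nothing else is required.
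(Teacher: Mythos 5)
Your proposal is correct and follows essentially the same route as the paper: the text preceding the corollary rewrites $\BF_{n,k} = \binom{n}{k}\sum_{\nu=0}^{n-k}\binom{n-k}{\nu}\BN_{k+\nu}$ via the identity $\binom{n}{\nu}\binom{\nu}{k}=\binom{n}{k}\binom{n-k}{\nu-k}$ and then applies the reflection relation of Theorem~\ref{thm:BF-reflect} with the substitution $r=k$, $s=n-k$, exactly as you do. Your sign bookkeeping and the cancellation of $\binom{r+s}{r}=\binom{r+s}{s}$ are both correct.
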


Actually, this reciprocity relation goes back to von Ettingshausen
\cite[Lec.\,42, Eq.\,(65), p.\,284]{Ettingshausen:1827} in 1827,
who used finite differences to achieve the result.


\newpage
\section{Conclusion}
\label{sec:con}

Using the notation of the previous section, the main results can be summarized
as follows. We consider the distinguished case, where $n \geq 3$ is odd.
The relationship between the power sum $S_n(x)$ and the Faulhaber polynomial
$\FF_n(y)$ can be given by a short chain of relations
\[
  S_n(x) = \int_{0}^{x} \BN_n(t) dt
    = \int_{0}^{x} u' \, \FP_n(u) dt
    = \int_{0}^{2y} \FP_n(u) du
    = \FF_n(y),
\]
where $u = t(t-1)$ and $y = \binom{x}{2}$ are quadratic substitutions.
The related Faulhaber polynomial $\FP_n(u)$ is defined by
\[
  \FP_n(u) = \sum_{k \geq 0} (-1)^k \fp_{n,k} \, u^k.
\]

Our new approach allows us to establish the unexpected connection between
the coefficients $\fp_{n,k}$ and the introduced generalized (self-) reciprocal
Bernoulli polynomials such that
\[
  \fp_{n,k} = \coeff{x^k} \, \BF_{n,2k}(x)
    = \frac{1}{k!} \, \BC_{n,2k}^{(k)}(1) \quad (k \geq 0).
\]
These relations also enable us to compute the coefficients $\fp_{n,k}$
in several ways (see Examples~\ref{exp:BF-poly} and~\ref{exp:coeff} for the
related coefficients $\ff_{n,k}$).
For $1 \leq k \leq (n-1)/2$, the nonvanishing of the coefficients $\fp_{n,k}$,
having the same sign for fixed $n$, follows from the property
\[
  (-1)^{(n-1)/2} \, \coeff{x^k} \, \BF_{n,2k}(x) > 0.
\]

On the other side, the coefficients $\fp_{n,k}$ vanish for $k \geq (n+1)/2$,
since $\deg \FP_n = (n-1)/2$. Additionally, we also have for $k \geq (n+1)/2$
the chain of implications induced by symmetry properties:
\[
  \BF_{n,2k}(x) \text{ is antipalindromic}
    \;\implies\; \coeff{x^k} \, \BF_{n,2k}(x) = 0
    \;\implies\; \BC_{n,2k}^{(k)}(1) = 0
    \;\implies\; \fp_{n,k} = 0.
\]
For $(n+1)/2 \leq k \leq n$, the expressions for $\BC_{n,2k}^{(k)}(1)$
imply the recurrences
\[
  \sum_{\nu=0}^{k} \binom{2k-n}{k-\nu} \BF_{n,\nu}
    = \sum_{\nu=0}^{k} \binom{2k-\nu}{k} \binom{n}{\nu} \BN_{n-\nu} = 0,
\]
which are causally induced by the antipalindromic property of $\BF_{n,2k}(x)$.

For further reading see the sequel paper~\cite{Kellner:2021}.
Generalizing the results for any Appell sequence $(\AN_\nu(x))_{\nu \geq 0}$,
it turns out that the Appell polynomials $\AN_\nu(x)$ can be similarly described
by certain Faulhaber-type polynomials when the reflection relation
\[
  \AN_\nu(1-x) = (-1)^\nu \, \AN_\nu(x) \quad (\nu \geq 0)
\]
holds as \eqref{eq:bern-refl} for the Bernoulli polynomials.


\newpage
\appendix
\section{Computations}

\setlength{\intextsep}{6pt plus 1pt minus 1pt}

\begin{table}[H] \small
\setstretch{1.25}
\begin{center}
\begin{tabular}{r@{\;=\;}l}
  \toprule
  $F_2(y)$ & $1$ \\
  $F_3(y)$ & $1$ \\
  $F_4(y)$ & $\frac{1}{5} ( 6 y - 1 )$ \\
  $F_5(y)$ & $\frac{1}{3} ( 4 y - 1 )$ \\
  $F_6(y)$ & $\frac{1}{7} ( 12 y^2 - 6 y + 1 )$ \\
  $F_7(y)$ & $\frac{1}{3} ( 6 y^2 - 4 y + 1 )$ \\
  $F_8(y)$ & $\frac{1}{15} ( 40 y^3 - 40 y^2 + 18 y - 3 )$ \\
  $F_9(y)$ & $\frac{1}{5} ( 16 y^3 - 20 y^2 + 12 y - 3 )$ \\
  $F_{10}(y)$ & $\frac{1}{11} ( 48 y^4 - 80 y^3 + 68 y^2 - 30 y + 5 )$ \\
  $F_{11}(y)$ & $\frac{1}{3} ( 16 y^4 - 32 y^3 + 34 y^2 - 20 y + 5 )$ \\
  $F_{12}(y)$ & $\frac{1}{455} ( 3360 y^5 - 8400 y^4 + 11480 y^3 - 9440 y^2 + 4146 y - 691 )$ \\
  $F_{13}(y)$ & $\frac{1}{105} ( 960 y^5 - 2800 y^4 + 4592 y^3 - 4720 y^2 + 2764 y - 691 )$ \\
  \bottomrule
\end{tabular}

\caption{First few Faulhaber polynomials $F_n$ for $n \geq 2$.}
\label{tbl:F-poly}
\end{center}
\end{table}
\vfill

\begin{table}[H] \small
\setstretch{1.25}
\begin{center}
\begin{tabular}{r@{\;=\;}l}
  \toprule
  $S_3(x)$ & $\frac{1}{4} u^2$ \\
  $S_5(x)$ & $\frac{1}{6} u^2 ( u - \frac{1}{2} )$ \\
  $S_7(x)$ & $\frac{1}{8} u^2 ( u^2 - \frac{4}{3} u + \frac{2}{3} )$ \\
  $S_9(x)$ & $\frac{1}{10} u^2 ( u^3 - \frac{5}{2} u^2 + 3 u - \frac{3}{2} )$ \\
  $S_{11}(x)$ & $\frac{1}{12} u^2 ( u^4 - 4 u^3 + \frac{17}{2} u^2 - 10 u + 5 )$ \\
  $S_{13}(x)$ & $\frac{1}{14} u^2 ( u^5 - \frac{35}{6} u^4 + \frac{287}{15} u^3
    - \frac{118}{3} u^2 + \frac{691}{15} u - \frac{691}{30} )$ \\
  \bottomrule
\end{tabular}

\caption{\parbox[t]{19.5em}{First few polynomials $S_n(x)$ for odd $n \geq 3$ in terms
of $u=x(x-1)$ according to Jacobi~\cite{Jacobi:1834}.}}
\label{tbl:S-poly-odd}
\end{center}
\end{table}
\vfill

\begin{table}[H] \small
\setstretch{1.25}
\begin{center}
\begin{tabular}{r@{\;=\;}l}
  \toprule
  $S_2(x)$ & $-\frac{1}{3} (x-\frac{1}{2}) \xi$ \\
  $S_4(x)$ & $+\frac{1}{5} (x-\frac{1}{2}) \xi ( \xi + \frac{1}{3} )$ \\
  $S_6(x)$ & $-\frac{1}{7} (x-\frac{1}{2}) \xi ( \xi^2 + \xi + \frac{1}{3} )$ \\
  $S_8(x)$ & $+\frac{1}{9} (x-\frac{1}{2}) \xi ( \xi^3 + 2 \xi^2 + \frac{9}{5} \xi + \frac{3}{5} )$ \\
  $S_{10}(x)$ & $-\frac{1}{11} (x-\frac{1}{2}) \xi ( \xi^4 + \frac{10}{3} \xi^3
    + \frac{17}{3} \xi^2 + 5 \xi + \frac{5}{3} )$ \\
  $S_{12}(x)$ & $+\frac{1}{13} (x-\frac{1}{2}) \xi ( \xi^5 + 5 \xi^4
    + \frac{41}{3} \xi^3 + \frac{472}{21} \xi^2 + \frac{691}{35} \xi + \frac{691}{105} )$ \\
  \bottomrule
\end{tabular}

\caption{\parbox[t]{21em}{First few polynomials $S_n(x)$ for even $n \geq 2$ in terms
of $\xi=-x(x-1)$ according to Schr\"oder~\cite{Schroeder:1867}.}}
\label{tbl:S-poly-even}
\end{center}
\end{table}

\begin{table}[H] \small
\setstretch{1.25}
\begin{center}
\begin{tabular}{r@{\;=\;}lcr@{\;=\;}l}
  \toprule
  $\ff_{n,0}$ &$2n \BN_{n-1}$ &&
  $\ff_{n,2}$ &$4 \left( 10 n \BN_{n-1} + \binom{n}{3} \BN_{n-3} \right)$ \\
  $\ff_{n,1}$ &$-8n \BN_{n-1}$ &&
  $\ff_{n,3}$ &$-32 \left( 7 n \BN_{n-1} + \binom{n}{3} \BN_{n-3} \right)$ \\
  \bottomrule
\end{tabular}

\caption{\parbox[t]{22em}{First few coefficients $\ff_{n,k}$ of Faulhaber
polynomials $F_n$ for odd $n \geq 3$ and $0 \leq k \leq \dd_n = (n-3)/2$.}}
\label{tbl:F-coeff-odd}
\end{center}
\end{table}
\vfill

\begin{table}[H] \small
\setstretch{1.25}
\begin{center}
\begin{tabular}{r@{\;=\;}lcr@{\;=\;}l}
  \toprule
  $\ff_{n,0}$ &$6 \BN_{n}$ &&
  $\ff_{n,2}$ &$8 \left( 30 \BN_{n} + \binom{n-1}{2} \BN_{n-2} \right)$ \\
  $\ff_{n,1}$ &$-36 \BN_{n}$ &&
  $\ff_{n,3}$ &$-80 \left( 21 \BN_{n} + \binom{n-1}{2} \BN_{n-2} \right)$ \\
  \bottomrule
\end{tabular}

\caption{\parbox[t]{22em}{First few coefficients $\ff_{n,k}$ of Faulhaber
polynomials $F_n$ for even $n \geq 2$ and $0 \leq k \leq \dd_n = (n-2)/2$.}}
\label{tbl:F-coeff-even}
\end{center}
\end{table}
\vfill

\begin{table}[H] \small
\newcommand{\mc}[1]{\multicolumn{1}{r}{\hspace*{1.5em}$#1$}}
\setstretch{1.25}
\begin{center}
\begin{tabular}{c|*{8}{r}}
  \toprule
  \diagbox[width=2.2em, height=2.2em]{$n$}{$k$} &
    \mc{0} & \mc{1} & \mc{2} & \mc{3} & \mc{4} & \mc{5} & \mc{6} & \mc{7} \\\hline
  $0$ & $1$ \\
  $1$ & $\frac{1}{2}$ & $0$ \\
  $2$ & $\frac{1}{6}$ & $-\frac{2}{3}$ & $-2$ \\
  $3$ & $0$ & $-\frac{1}{2}$ & $0$ & $0$ \\
  $4$ & $-\frac{1}{30}$ & $-\frac{1}{15}$ & $\frac{8}{5}$ & $8$ & $88$ \\
  $5$ & $0$ & $\frac{1}{6}$ & $1$ & $0$ & $0$ & $0$ \\
  $6$ & $\frac{1}{42}$ & $\frac{1}{21}$ & $-\frac{5}{7}$ & $-\frac{64}{7}$ & $-80$ & $-1200$ & $-23760$ \\
  $7$ & $0$ & $-\frac{1}{6}$ & $-1$ & $-3$ & $0$ & $0$ & $0$ & $0$ \\
  \bottomrule
\end{tabular}

\caption{First few coefficients $\bb_{n,k}$.}
\label{tbl:bb-coeff}
\end{center}
\end{table}
\vfill

\begin{table}[H] \small
\newcommand{\mc}[1]{\multicolumn{1}{r}{\hspace*{1.5em}$#1$}}
\setstretch{1.25}
\begin{center}
\begin{tabular}{c|*{9}{r}}
  \toprule
  \diagbox[width=2.2em, height=2.2em]{$n$}{$k$} &
    \mc{0} & \mc{1} & \mc{2} & \mc{3} & \mc{4} & \mc{5} & \mc{6} & \mc{7} & \mc{8} \\\hline
  $0$ & $1$ \\
  $1$ & $\frac{1}{2}$ & $-\frac{1}{2}$ \\
  $2$ & $\frac{1}{6}$ & $-\frac{2}{3}$ & $\frac{1}{6}$ \\
  $3$ & $0$ & $-\frac{1}{2}$ & $\frac{1}{2}$ & $0$ \\
  $4$ & $-\frac{1}{30}$ & $-\frac{2}{15}$ & $\frac{4}{5}$ & $-\frac{2}{15}$ & $-\frac{1}{30}$ \\
  $5$ & $0$ & $\frac{1}{6}$ & $\frac{2}{3}$ & $-\frac{2}{3}$ & $-\frac{1}{6}$ & $0$ \\
  $6$ & $\frac{1}{42}$ & $\frac{1}{7}$ & $-\frac{1}{7}$ & $-\frac{32}{21}$ & $-\frac{1}{7}$
   & $\frac{1}{7}$ & $\frac{1}{42}$ \\
  $7$ & $0$ & $-\frac{1}{6}$ & $-1$ & $-\frac{4}{3}$ & $\frac{4}{3}$ & $1$ & $\frac{1}{6}$ & $0$ \\
  $8$ & $-\frac{1}{30}$ & $-\frac{4}{15}$ & $-\frac{4}{15}$ & $\frac{32}{15}$ & $\frac{16}{3}$
   & $\frac{32}{15}$ & $-\frac{4}{15}$ & $-\frac{4}{15}$ & $-\frac{1}{30}$ \\
  \bottomrule
\end{tabular}

\caption{First few coefficients $\BF_{n,k}$.}
\label{tbl:BF-coeff}
\end{center}
\end{table}

\begin{table}[H] \small
\setstretch{1.25}
\begin{center}
\begin{tabular}{r@{\;=\;}l}
  \toprule
  $\FP_3(u)$ & $\frac{1}{2} u$ \\
  $\FP_5(u)$ & $\frac{1}{2} u^2 - \frac{1}{6} u$ \\
  $\FP_7(u)$ & $\frac{1}{2} u^3 - \frac{1}{2} u^2 + \frac{1}{6} u$ \\
  $\FP_9(u)$ & $\frac{1}{2} u^4 - u^3 + \frac{9}{10} u^2 - \frac{3}{10} u$ \\
  $\FP_{11}(u)$ & $\frac{1}{2} u^5 - \frac{5}{3} u^4 + \frac{17}{6} u^3
    - \frac{5}{2} u^2 + \frac{5}{6} u$ \\
  $\FP_{13}(u)$ & $\frac{1}{2} u^6 - \frac{5}{2} u^5 + \frac{41}{6} u^4
    - \frac{236}{21} u^3 + \frac{691}{70} u^2 - \frac{691}{210} u$ \\
  \bottomrule
\end{tabular}

\caption{First few polynomials $\FP_n$ for odd $n \geq 3$.}
\label{tbl:FP-poly-odd}
\end{center}
\end{table}
\vfill

\setcounter{theorem}{\value{table}}

\begin{example} \label{exp:BF-poly}
Computation of the coefficients $\ff_{n,k}$ for $n=7$ using the power series
of $\BF_{n,2k}(x)$. By Theorem~\ref{thm:BF-poly} we obtain
\newcommand{\mcl}[1]{{\color{purple}\mathbf{#1}}}
\begin{alignat*}{2}
  \BF_{7,0}(x) &= -\tfrac{1}{6} x + \tfrac{1}{6} x^2 + x^3
    - \tfrac{10}{3} x^4 + \tfrac{10}{3} x^5 + \dotsm\!, \\
  \BF_{7,2}(x) &= \mcl{-\tfrac{1}{6}} x - \tfrac{1}{6} x^2 + \tfrac{7}{6} x^3
    - \tfrac{7}{6} x^4 - \tfrac{7}{3} x^5 + \dotsm\!, &\quad
  \ff_{7,0} &= -\tfrac{2^2}{2} \times \left( -\tfrac{1}{6} \right) = \tfrac{1}{3}, \\
  \BF_{7,4}(x) &= -\tfrac{1}{6} x \mcl{-\tfrac{1}{2}} x^2 + \tfrac{2}{3} x^3
    + x^4 - \tfrac{7}{2} x^5 + \dotsm\!, &\quad
  \ff_{7,1} &= +\tfrac{2^3}{3} \times \left( -\tfrac{1}{2} \right) = -\tfrac{4}{3}, \\
  \BF_{7,6}(x) &= -\tfrac{1}{6} x - \tfrac{5}{6} x^2 \mcl{-\tfrac{1}{2}} x^3
    + \tfrac{11}{6} x^4 - \tfrac{5}{6} x^5 + \dotsm\!, &\quad
  \ff_{7,2} &= -\tfrac{2^4}{4} \times \left( -\tfrac{1}{2} \right) = 2, \\
  \BF_{7,8}(x) &= -\tfrac{1}{6} x - \tfrac{7}{6} x^2 - \tfrac{7}{3} x^3
    + \tfrac{7}{3} x^5 + \tfrac{7}{6} x^6 + \tfrac{1}{6} x^7, &\quad
  \ff_{7,3} &= +\tfrac{2^5}{5} \times 0 = 0.
\end{alignat*}

Since the coefficients $\ff_{7,k}$ vanish for $k \geq 3$ due to the symmetry of
the antipalindromic polynomials $\BF_{7,2(k+1)}(x)$, this yields the
Faulhaber polynomial (see Table~\ref{tbl:F-poly})
\[
  F_7(y) = 2 y^2 - \tfrac{4}{3} y + \tfrac{1}{3}.
\]

Alternatively, Theorem~\ref{thm:BN-FP-poly} provides the polynomial
(see Table~\ref{tbl:FP-poly-odd})
\[
  \FP_7(u) = \tfrac{1}{2} u^3 - \tfrac{1}{2} u^2 + \tfrac{1}{6} u.
\]
Expanding $u = x(x-1)$ results in
$\tfrac{1}{2} x^6 -\tfrac{3}{2} x^5 + x^4 + \tfrac{1}{2} x^3 - \tfrac{1}{3} x^2 - \tfrac{1}{6} x$.
Multiplied by $2x-1$, one finally gets
\[
  \BN_7(x) = x^7 - \tfrac{7}{2} x^6 + \tfrac{7}{2} x^5 - \tfrac{7}{6} x^3 + \tfrac{1}{6} x.
\]
\end{example}
\vfill

\begin{example} \label{exp:coeff}
Computation of $\ff_{n,k}$ for $n=13$ and $k=4$.
By Corollary~\ref{cor:coeff-deriv} we have
\[
  \ff_{13,4} = - \tfrac{2^6}{6!} \times \BC_{13,10}^{(5)}(1).
\]
We can apply three different methods to compute the value of $\BC_{13,10}^{(5)}(1)$:
\begin{align*}
\shortintertext{\indent (1)~We can use Theorem~\ref{thm:recip-deriv} to compute that}
  \BC_{13,10}^{(5)}(x) &=
    -12 \, (4146 x^{12} - 4550 x^{10} + 429 x^8 + 130 x^2 - 390 x + 210)/x^8, \\
  \BC_{13,10}^{(5)}(1) &= 300. \displaybreak \\
\shortintertext{\indent (2)~We have by Theorem~\ref{thm:coeff-deriv} that}
  \BC_{13,10}^{(5)}(1) &= -5! \sum_{\nu=0}^{5} \binom{10-\nu}{5} \binom{13}{\nu} \BN_{13-\nu} \\
    &= -120 \, (1287 \, \BN_8 + 6006 \, \BN_{10} + 1638 \, \BN_{12}) \\
    &= -120 \mleft( 1287 \times \mleft( -\tfrac{1}{30} \mright)
       + 6006 \times \tfrac{5}{66}
       + 1638 \times \mleft( -\tfrac{691}{2730} \mright) \mright) \\
    &= 300. \\
\shortintertext{\indent (3)~It follows from Theorems~\ref{thm:coeff-deriv-2} and~\ref{thm:BF-reflect} that}
  \BC_{13,10}^{(5)}(1) &= 5! \sum_{\nu=0}^{5} \binom{-3}{5-\nu} \BF_{13,\nu} \\
    &= 120 \, (15 \, \BF_{13,1} - 10 \, \BF_{13,2} + 6 \, \BF_{13,3} - 3 \, \BF_{13,4} + \BF_{13,5}) \\
    &= 120 \mleft( 15 \times \tfrac{691}{210} - 10 \times \tfrac{1382}{35}
       + 6 \times \tfrac{20528}{105} - 3 \times \tfrac{10652}{21} + \tfrac{24384}{35} \mright) \\
    &= 300.
\end{align*}
Finally, this yields
$\ff_{13,4} = - \tfrac{2^6}{6!} \times 300 = - \tfrac{80}{3}$,
which matches the corresponding coefficient $-2800/105$ in Table~\ref{tbl:F-poly}.
\end{example}
\vfill

\setcounter{figure}{\value{theorem}}

\begin{figure}[H]
\begin{center}
\includegraphics[width=13.5cm]{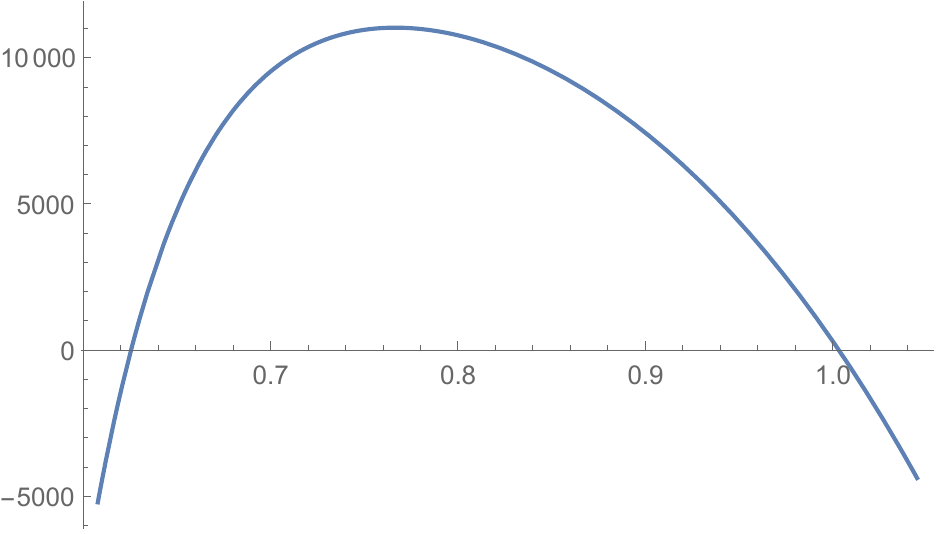}

\vspace*{3pt}
\begin{minipage}{11.0cm}
\begin{center}
\footnotesize
$\BC_{13,10}^{(5)}(x) = -12 \, \mleft( 4146 x^{12} - 4550 x^{10} + 429 x^8
+ 130 x^2 - 390 x + 210 \mright)/x^8$. \\
The zero near $x=1$ is located at $x \approx 1.003198$.
\end{center}
\end{minipage}

\caption{Graph of $\BC_{13,10}^{(5)}$.}
\label{fig:graph}
\end{center}
\end{figure}


\section*{Acknowledgments}

We would like to thank the anonymous referee for valuable comments and useful
suggestions, which improved the quality of the paper.


\newpage
\bibliographystyle{amsplain}

\begin{thebibliography}{10}
\setlength{\itemsep}{4pt}

\bibitem{ANFSSW:2003}
H.-W.~Alten, A.~D.~Naini, M.~Folkerts, H.~Schlosser, K.-H.~Schlote, and H.~Wu{\ss}ing,
\newblock \emph{4000 Jahre Algebra},
\newblock Springer--Verlag, Berlin, 2003.

\bibitem{Appell:1880}
P.~Appell,
\newblock \emph{Sur une classe de polyn\^omes},
\newblock Ann. Sci. \'Ecole Norm. Sup.~(2) \textbf{9} (1880), 119--144.

\bibitem{Bachmann:1910}
P.~Bachmann,
\newblock \emph{Niedere Zahlentheorie},
\newblock Part \textbf{2}, Teubner, Leipzig, 1910.
\newblock (Parts 1 and 2 reprinted in one volume, Chelsea, New York, 1968.)

\bibitem{Beardon:1996}
A.~F.~Beardon,
\newblock \emph{Sums of powers of integers},
\newblock Amer. Math. Monthly \textbf{103} (1996), 201--213.

\bibitem{Bernoulli:1713}
J.~Bernoulli,
\newblock \emph{Ars Conjectandi},
\newblock Basel, 1713.

\bibitem{Blij:1958}
F.~van~der~Blij,
\newblock \emph{Once again $\sum_{k=1}^{n-1} k^p$},
\newblock Euclides \textbf{34} (1958), 26--27. (Dutch)

\bibitem{Bruno:1857}
F.~F.~di~Bruno,
\newblock \emph{Note sur une nouvelle formule de calcul diff\'erentiel},
\newblock Quart. J. Pure Appl. Math. \textbf{1} (1857), 359--360.

\bibitem{Cajori:1928}
F.~Cajori,
\newblock \emph{A history of mathematical notations},
\newblock vol.~\textbf{1}, Open Court Publish. Comp., Chicago, 1928.

\bibitem{Cantor:1880}
M.~Cantor,
\newblock \emph{Vorlesungen \"uber Geschichte der Mathematik},
\newblock vol.~\textbf{1--2}, Teubner, Leipzig, 1880, 1892.

\bibitem{Carlitz:1962}
L.~Carlitz,
\newblock \emph{A note on sums of powers of integers},
\newblock Amer. Math. Monthly \textbf{69} (1962), 290--291.

\bibitem{Carlitz:1974}
L.~Carlitz,
\newblock \emph{A note on Bernoulli numbers and polynomials},
\newblock Elem. Math. \textbf{29} (1974), 90--92.

\bibitem{Comtet:1974}
L.~Comtet,
\newblock \emph{Advanced Combinatorics},
\newblock D. Reidel, Dordrecht, 1974.

\bibitem{Dirichlet:1856}
P.~G.~L.~Dirichlet,
\newblock \emph{Ged\"achtnisrede auf Carl Gustav Jacob Jacobi},
\newblock J. Reine Angew. Math. \textbf{52} (1856), 193--217.

\bibitem{Edwards:1982}
A.~W.~F.~Edwards,
\newblock \emph{Sums of powers of integers: a little of the history},
\newblock Math. Gaz. \textbf{66} (1982), 22--28.

\bibitem{Edwards:1986}
A.~W.~F.~Edwards,
\newblock \emph{A quick route to sums of powers},
\newblock Amer. Math. Monthly \textbf{93} (1986), 451--455.

\bibitem{Ettingshausen:1827}
A.~von~Ettingshausen,
\newblock \emph{Vorlesungen \"uber die h\"ohere Mathematik},
\newblock vol. \textbf{1}, Carl Gerold, Vienna, 1827.

\bibitem{Euler:1740}
L.~Euler,
\newblock \emph{De summis serierum reciprocarum},
\newblock Comm. Acad. Sci. Petrop.~\textbf{7} (1740), 123--134.

\bibitem{Euler:1741}
L.~Euler,
\newblock \emph{Inventio summae cuiusque seriei ex dato termino generali},
\newblock Comm. Acad. Sci. Petrop.~\textbf{8} (1741), 9--22.

\bibitem{Euler:1755}
L.~Euler,
\newblock \emph{Institutiones calculi differentialis cum eius usu in analysi finitorum ac doctrina serierum},
\newblock Acad. Imp. Sci. Petrop.~\textbf{1}, 1755.

\bibitem{Euler:1770}
L.~Euler,
\newblock \emph{De summis serierum numeros Bernoullianos involventium},
\newblock Novi Comm. Acad. Sci. Petrop.~\textbf{14} (1770), 129--167.

\bibitem{Faulhaber:1614}
J.~Faulhaber,
\newblock \emph{Newer Arithmetischer Wegweyser},
\newblock Johann Meder, Ulm, 1614; 2nd ed. 1617.

\bibitem{Faulhaber:1617}
J.~Faulhaber,
\newblock \emph{Continuatio seiner neuen Wunderk\"unsten},
\newblock Conradt Holtzhalb, N\"urnberg, 1617.

\bibitem{Faulhaber:1631}
J.~Faulhaber,
\newblock \emph{Academia Algebrae}, Johann Remmelin, Augsburg, 1631.

\bibitem{Gauss&Schumacher:1863}
C.~F.~Gauss and H.~C.~Schumacher,
\newblock \emph{Briefwechsel zwischen C.\,F.\,Gauss und H.\,C.\,Schumacher},
\newblock vol.~\textbf{5}, Gustav Esch, Altona, 1863.

\bibitem{Genocchi:1852}
A.~Genocchi,
\newblock \emph{Intorno all'espressione generale de'numeri Bernulliani},
\newblock Ann. Sci. Mat. Fis. \textbf{3} (1852), 395--405.

\bibitem{Gessel:2003}
I.~M.~Gessel,
\newblock \emph{Applications of the classical umbral calculus},
\newblock Algebra Univers. \textbf{49} (2003), 397--434.

\bibitem{Gessel&Viennot:1989}
I.~M.~Gessel and X.~G.~Viennot,
\newblock \emph{Determinants, paths, and plane partitions},
\newblock \href{http://people.brandeis.edu/~gessel/homepage/papers/pp.pdf}{preprint} (1989), 1--36.

\bibitem{Glaisher:1899}
J.~W.~L.~Glaisher,
\newblock \emph{On the sums of the series $1^n+2^n+\cdots+x^n$ and $1^n-2^n+\cdots\pm x^n$},
\newblock Quart. J. Pure Appl. Math. \textbf{30} (1899), 166--204.

\bibitem{Graham&others:1994}
R.~L.~Graham, D.~E.~Knuth, and O.~Patashnik,
\newblock \emph{Concrete Mathematics},
\newblock 2nd ed., Addison-Wesley, Reading, MA, 1994.

\bibitem{Hoppe:1846}
R.~Hoppe,
\newblock \emph{Ueber independente Darstellung der h\"ohern Differentialquotienten und den Gebrauch des Summenzeichens},
\newblock J. Reine Angew. Math. \textbf{33} (1846), 78--89.

\bibitem{Jacobi:1834}
C.~G.~J.~Jacobi,
\newblock \emph{De usu legitimo formulae summatoriae Maclaurinianae},
\newblock J.~Reine Angew.~Math. \textbf{12} (1834), 263--272.

\bibitem{Joffe:1915}
S.~A.~Joffe,
\newblock \emph{Sums of like powers of natural numbers},
\newblock Quart. J. Pure Appl. Math. \textbf{46} (1915), 33--51.

\bibitem{Johnson:2002}
W.~P.~Johnson,
\newblock \emph{The curious history of Fa\`a di Bruno's formula},
\newblock Amer. Math. Monthly \textbf{109} (2002), 217--234.

\bibitem{Kaestner:1799}
A.~G.~K\"astner,
\newblock \emph{Geschichte der Mathematik},
\newblock vol.~\textbf{3}, Rosenbusch, G\"ottingen, 1799.

\bibitem{Kellner:2021}
B.~C.~Kellner,
\newblock \emph{On (self-) reciprocal {Appell} polynomials: symmetry and Faulhaber-type polynomials},
\newblock Integers \textbf{21} (2021), \#A119, 1--19.

\bibitem{Knuth:1993}
D.~E.~Knuth,
\newblock \emph{Johann Faulhaber and sums of powers},
\newblock Math. Comp. \textbf{61} (1993), 277--294.

\bibitem{Lah:1954}
I.~Lah,
\newblock \emph{A new kind of numbers and its application in the actuarial mathematics},
\newblock Inst. Actu\'arios Portug., Bol. \textbf{9} (1954), 7--15.

\bibitem{Lampe:1878}
E.~Lampe,
\newblock \emph{Auszug eines Schreibens an Herrn Stern \"uber die ``Verallgemeinerung einer Jacobi'schen Formel.''},
\newblock J. Reine Angew. Math. \textbf{84} (1878), 270--272.

\bibitem{Lucas:1891}
\'{E}.~Lucas,
\newblock \emph{Th\'{e}orie des Nombres},
\newblock Gauthier--Villars, Paris, 1891.

\bibitem{Maclaurin:1742}
C.~Maclaurin,
\newblock \emph{A Treatise of Fluxions}, in two books, Ruddimans, Edinburgh, 1742.

\bibitem{Norlund:1922}
N.~E.~N{\o}rlund,
\newblock \emph{M\'emoire sur les polynomes de Bernoulli},
\newblock Acta Math. \textbf{43} (1922), 121--196.

\bibitem{Norlund:1924}
N.~E.~N{\o}rlund,
\newblock \emph{Vorlesungen \"uber Differenzenrechnung},
\newblock J. Springer, Berlin, 1924.

\bibitem{MacTutor:Jacobi}
J.~J.~O'Connor and E.~F.~Robertson,
\newblock \href{https://mathshistory.st-andrews.ac.uk/Biographies/Jacobi/}{\emph{Carl Gustav Jacob Jacobi}},
\newblock MacTutor History of Mathematics.

\bibitem{Pacioli:1494}
L.~Pacioli,
\newblock \emph{Summa de arithmetica geometria},
\newblock Paganino Paganini, Venice, 1494.

\bibitem{Poisson:1826}
S.~D.~Poisson,
\newblock \emph{M\'emoire sur le calcul num\'erique des int\'egrales d\'efinies},
\newblock Acad. Sci. \textbf{6} (1826), 1--34.

\bibitem{Prasolov:2010}
V.~V.~Prasolov,
\newblock \emph{Polynomials},
\newblock 2nd ed., ACM \textbf{11}, Springer--Verlag, Berlin, 2010.

\bibitem{Raabe:1848}
J.~L.~Raabe,
\newblock \emph{Die Jacob Bernoullische Function},
\newblock Orell, F\"ussli \& Co., Z\"urich, 1848, 1--51.

\bibitem{Raabe:1851}
J.~L.~Raabe,
\newblock \emph{Zur\"uckf\"uhrung einiger Summen und bestimmten Integrale auf die Jacob-Bernoullische Function},
\newblock J. Reine Angew. Math. \textbf{42} (1851), 348--367.

\bibitem{Schneider:1983}
I.~Schneider,
\newblock \emph{Potenzsummenformeln im 17. Jahrhundert},
\newblock Hist. Math. \textbf{10} (1983), 286--296.

\bibitem{Schneider:1993}
I.~Schneider,
\newblock \emph{Johannes Faulhaber 1580--1635, Rechenmeister in einer Welt des Umbruchs},
\newblock  Vita Mathe\-matica \textbf{7}, Birkh\"auser--Verlag, Basel, 1993.

\bibitem{Schroeder:1867}
E.~Schr\"oder,
\newblock \emph{Eine Verallgemeinerung der Mac-Laurinschen Summenformel},
\newblock Kantonsschule, Z\"urich, 1867, 1--28.

\bibitem{Stern:1878}
M.~Stern,
\newblock \emph{Verallgemeinerung einer Jacobischen Formel},
\newblock J. Reine Angew. Math. \textbf{84} (1878), 216--219.

\bibitem{Sturm:1859}
C.~Sturm,
\newblock \emph{Cours d'Analyse},
\newblock  vol.~\textbf{2}, Mallet--Bachelier, Paris, 1859.

\bibitem{Tits:1922}
L.~Tits,
\newblock \emph{Identit\'es nouvelles pour le calcul des nombres de Bernoulli},
\newblock Nouv. Ann. Math. \textbf{1} (1922), 191--196.

\end{thebibliography}

\end{document}